\colorlet{darkred}{red!40!black}
\newdelim{\ip}{\langle}{\rangle}
\newcommand{\leqnomode}{\tagsleft@true\let\veqno\@@leqno}
\newcommand{\reqnomode}{\tagsleft@false\let\veqno\@@eqno}
\newcommand{\mylabel}[2]{\def\@currentlabel{#2}\label{#1}}
\newcommand{\FF}{\mathcal{F}}
\newcommand{\FL}{\mathcal{L}}
\newcommand{\FS}{\mathcal{S}}
\newcommand{\calT}{\mathcal{T}}
\def \k {\mathbf k}
\begin{document}
\title[Suppression of epitaxial thin film growth by mixing]%
  {Suppression of epitaxial thin film growth by mixing}
\author[Feng]{Yu Feng}
\address{% 
  Yu Feng, Department of Mathematics, University of Wisconsin --
  Madison, Madison, WI 53706}
\email{feng65@wisc.edu}

\author[Hu]{Bingyang Hu}
\address{%
  Bingyang Hu, Department of Mathematics, Purdue University, 150 N. University St.,W.Lafayette, IN 47907}
\email{hu776@purdue.edu}

\author[Xu]{Xiaoqian Xu}
\address{%
  Xiaoqian Xu, Department of Mathematics, Duke Kunshan University, 8 Duke Ave, Kunshan, Suzhou, Jiangsu, China, 215316}
\email{xiaoqian.xu@dukekunshan.edu.cn}

\date{\today}

\begin{abstract}
We consider following fourth-order parabolic equation with gradient nonlinearity on the two-dimensional torus with and without advection of an incompressible vector field in the case $2<p<3$:
\begin{equation*}
    \partial_t u + (-\lap)^2 u = -\grad\cdot(\abs{\grad u}^{p-2}\grad u).
\end{equation*}
The study of this form of equations arises from mathematical models that simulate the epitaxial growth of the thin film. We prove the local existence of mild solutions for any initial data lies in $L^2$ in both cases. Our main result is: in the advective case, if the imposed advection is sufficiently mixing, then the global existence of solution can be proved, and the solution will converge exponentially to a homogeneous mixed state. While in the absence of advection, there exist initial data in $H^2\cap W^{1,\infty}$ such that the solution will blow up in finite time.
\end{abstract}
\maketitle

\section{Introduction}
The global well-posedness and finite-time blow-up of the solution to a nonlinear parabolic PDE is of widespread interest and arises in different applications in many different areas, especially in mathematical biology and fluid dynamics. Advection is often discussed in the corresponding papers, and the presence of it sometimes may stabilize the singularity. We refer the interested reader  \cites{FannjiangKiselevEA06,BerestyckiKiselevEA10,KiselevXu16,BedrossianHe17,He18,HeTadmor19,feng2020global} and the references therein for more details. As a remark, the stabilization of the fluid flow was proposed to study Keller-Segel type equations with some constraints. Later, in \cites{feng2020phase,feng2020global}, the authors considered such a phenomenon for more general types of equations such as Cahn-Hilliard equations and Kuramoto-Sivashinsky equations. One common ingredient for most of these previous results is to explore the special convective motions with mixing flows, whose dissipation-enhancing properties have been understood well, and we refer the interested reader \cites{ConstantinKiselevEA08, YaoZlatos17, AlbertiCrippaEA16, ElgindiZlatos19, Zlatos10,CotiZelatiDelgadinoEA18,FengIyer19, iyer2019convection} for more examples about such flows.

In this article, we consider the following fourth-order parabolic equations with gradient nonlinearity, which presents one of the continuum models for epitaxial thin film growth (see, e.g., \cite{ortiz1999continuum,zangwill1996some}): for any $p>2$,   
\begin{equation} \label{20200716eq01}
    h_t + A_1\lap h + A_2\lap^2 h + A_3\grad\cdot(\abs{\grad h}^{p-2}\grad h) = g, 
\end{equation}
where $h(t, x)$ denotes the height of a film in epitaxial growth with $g(t,x)$ being the deposition flux and $A_1, A_2, A_3 \in \R$. The spatial derivatives in the above equation have the following physical interpretations:
\begin{itemize}
    \item [(1). ] $A_1\lap h$: diffusion due to evaporation-condensation~\cites{edwards1982surface,mullins1957theory}; 
    \item [(2). ] $A_2\lap^2 h$: capillarity-driven surface diffusion~\cites{herring1999surface,mullins1957theory}; 
    \item [(3). ] $A_3\grad\cdot(\abs{\grad h}^{p-2}\grad h)$: (upward) hopping of atoms~\cite{sarma1992solid}.
\end{itemize}
These models simulate the complex process of making a thin film layer on a substrate by chemical vapor deposition, and one of the most interesting questions is to understand these growth processes quantitatively on the correct scale so that people can optimize the particular properties of the film. These thin-film type equations have been studied by many authors (see, e.g., \cites{ishige2020blowup,king2003fourth,sandjo2014space,li2003thin,sandjo2015solutions}). One can also refer to \cites{ortiz1999continuum,schulze1999geometric} for more development of such equations, which concentrates on the background of material science. 

In this paper, we will concentrate on the analytical properties of the model \eqref{20200716eq01} on two dimensional torus~$\T^2=\left[0,1 \right]^2$. One typical situation for such models is the case when $A_1=0$, $A_2=A_3=1$ and $g=0$:
\begin{equation}
\label{e:4 order eqn}
    \partial_t u + (-\lap)^2 u = -\grad\cdot(\abs{\grad u}^{p-2}\grad u), \qquad 
    u(0,x)= u_0(x), 
\end{equation}
where $p>2$ and $\partial_t\defeq\partial/\partial t$. For simplicity, we denote $N(u)\defeq \grad\cdot F(\grad u)$, where $F(\xi)=\abs{\xi}^{p-2}\xi$. It is worth to mention that~\eqref{e:4 order eqn} can be regarded as the $L^2$-gradient flow for the energy functional
\begin{equation*}
    E(\phi)\defeq 
    \frac{1}{2}\int_{\T^d}(\lap\phi)^2\,dx - \frac{1}{p}\int_{\T^d}\abs{\grad\phi}^{p}\,dx
    = \frac{1}{2}\norm{\lap\phi}_2^2 - \frac{1}{p}\norm{\grad\phi}_p^p,
\end{equation*}
which satisfies
\begin{equation*}
    \lim_{\eta\rightarrow\infty} E(\eta\phi)=-\infty
\end{equation*}
for $\phi\in H^2(\T^2)\cap W^{1,p}(\T^2)\setminus\{0\}$. For the nonlinear term in \eqref{e:4 order eqn}, the local existence and singularity behavior was first studied in \cite{ishige2020blowup} on $\mathbb{R}^N$.
In this paper, we study~\eqref{e:4 order eqn} with and without advection is given by an incompressible vector field. We will prove that the advection term with good mixing property would prevent the finite time blowup for non-classical solutions to this equation. In this article, we focus on the case $2<p<3$ on $\T^2$, however, a parallel argument can extend our results to the three-dimension torus with a smaller range of $p$, we leave the details to the interested reader.

Throughout this paper, $C$ denotes a generic constant that may change from line to line.

\medskip

This paper is organized as follows: Section \ref{fengyuxiaomeimei} is devoted to studying the local well-posedness of the solution to \eqref{e:4 order eqn}; in Section \ref{fengyuxiaogege}, we show that adding advection term to \eqref{e:4 order eqn} may enable the existence of global solution if we choose the velocity field carefully with respect to the initial data; finally, in Section~\ref{sec:final sec} we give examples of the blow-up of the solutions, and also characterize the blow-up behavior by providing a quantitative blow-up rate of the $L^2$ norm.

\subsection*{Acknowledgments}
The authors would like to thank Yuanyuan Feng for useful discussions. We would also like to thank the anonymous referees for the helpful comments.

\section{Epitaxial thin film growth equation without advection} \label{fengyuxiaomeimei}
In this section we study~\eqref{e:4 order eqn} without advection on $\T^2$ under the regime $2<p<3$. We begin with introducing some notation that will be used throughout this paper. Given a function $f\in L^{p}(\T^2)$ with $p\geq 1$, we denote $\hat{f}(\k)$ to be the Fourier coefficient of $f$ at frequency $\k\in\mathbb{Z}^2$.
Now for $s \in \mathbb{R}_{+}$, we can define the Sobolev space $H^{s}(\T^2)$ and the homogeneous Sobolev space $\dot{H}^{s}(\T^2)$  by the collection of measurable functions $f$ on $\T^2$, with 
$$
    \norm{f}_{H^s}^2 \defeq \sum_{\k\in\mathbb{Z}^2}(1+\abs{\k}^2)^s\abs{\hat{f}(\k)}^2=\norm{(I-\lap)^{s/2}f}_{L^2}^2
$$
and
$$
\norm{f}_{\dot{H}^s}^2 \defeq \sum_{\k\in\mathbb{Z}^2}\abs{\k}^{2s}\abs{\hat{f}(\k)}^2=\norm{(-\lap)^{s/2}f}_{L^2}^2, 
$$
respectively, where $(-\lap)^{s/2}$ agrees with the Fourier multiplier with symbol $\abs{\k}^s$, $\k\neq0$ and $I$ is the identity operator. Note that for $f \in L^2(\T^2)$, $f \in H^s(\T^2)$ if and only if $f \in \dot{H}^s(\T^2)$.

Let $\mathcal{L}=\lap^2$, which maps $H^4$ to $L^2$, and $e^{-t\FL}$ be the strongly continuous semigroup generated by $\mathcal{L}$ on $L^2$ given by
\begin{equation*}
    e^{-t\FL} f \defeq \FF^{-1}\Big(e^{-t\abs{\k}^4}\hat{f}\Big),
\end{equation*}
where $\FF^{-1}$ denotes the inverse Fourier transform on $\Z^2$. For the reader's convenience, we will recall some useful properties of $e^{-t \FL}$ in Lemma~\ref{lem: semi group on divergence} and Lemma~\ref{lem: Hs of semigroup}. To this end, for any measurable function $\phi(t, x)$ defined on $\R_{\ge 0} \times \T^2$, we denote $\phi(t)$ to be the function $\phi(t, \cdot)$, that is, for any $x \in \T^2$, $\phi(t)(x)=\phi(t,x)$. Next we recall the notion of mild and weak solutions in the following definitions.

\begin{definition}
\label{def:mild and weak soln}
\begin{enumerate}
\item [(1).] For $p>2$, a function $ u\in C([0,T];L^{2}(\mathbb{T}^2)),T>0$, such that $\grad u$ is locally integrable, is called a \emph{mild solution} of~\eqref{e:4 order eqn} on $[0,T]$ with initial data $ u_0\in L^{2}(\T^2)$, if for any $0 \le t \le T$, 
\begin{equation}
\label{e:volterra integral 1}
     u(t)=\mathcal{T}(u)(t)\defeq e^{-t\FL} u_0-\int_0^t\, \grad e^{-(t-s)\FL}\Big(\abs{\grad u}^{p-2}\grad u\Big)\,ds
\end{equation}
holds pointwisely in time with values in $L^2$, where the integral is defined in the B\"{o}chner sense. 

\item [(2).] For $p>2$, a function $ u\in L^{\infty}([0,T];L^{2}(\T^{2}))\cap L^2([0,T];H^2(\T^{2}))$ is called a \emph{weak solution} of~\eqref{e:4 order eqn} on $[0,T)$ with initial data $u_0\in L^{2}(\T^{2})$ if for all $\phi\in C_c^{\infty}([0,T)\times\T^{2})$,
\begin{align}
\label{e:weak soln}
&\int_{\T^2}\,u_0\phi(0)\,dx\, +\, \int_0^T\,\int_{\T^2}u\,\partial_t \phi\,dx dt \notag \\ 
&=\int_0^T\,\int_{\T^2}\,\lap u\lap\phi dx dt\, -\, \int_0^T \int_{\T^2}\,\abs{\grad u}^{p-2}\grad u\cdot\grad\phi\,dx dt,
\end{align}
and $\partial_t u\in L^{2}([0,T]; H^{-2}(\T^2))$. 
\end{enumerate}
\end{definition}
Mild solutions are formally fixed points of the non-linear map $\mathcal{T}$, and~\eqref{e:volterra integral 1} is in the form of a Volterra integral equation.
\begin{remark}
The mild solution with rough initial data ($L^2$) established above is quite different from the ones considered in the previous studies (see ~\cites{ishige2020blowup,sandjo2014space,sandjo2015solutions}). In Section~\ref{fengyuxiaogege}, we show that this type of mild solution can be easily extended to be a global one by adding an advection term with specific mixing property compared to the mild solutions considered before.
\end{remark}

\subsection{Local existence with $L^2$ initial data}
When $3<p<4$, the local existence of the mild solution to \eqref{e:4 order eqn} on $\mathbb{R}^N$ was studied in \cites{sandjo2014space,sandjo2015solutions}. In the recent paper \cite{ishige2020blowup}, the authors provided the local existence result of the mild solution for a better range $2<p\leq 4$ on $\mathbb{R}^N$ with initial data that has better regularity (rather than $L^2(\T^2)$). While in this paper, we focus on the case $2<p< 3$ on $\mathbb{T}^2$ with rough initial data. We show for any initial data $u_0\in L^2(\T^2)$, there exists $0<T<1$ depends on $\norm{u_0}_{L^2}$ and $p$ such that~\eqref{e:4 order eqn} admits a mild solution on $[0,T]$. Moreover, in Proposition~\ref{prop:mild equi with weak}, we show that this mild solution is also a weak solution to~\eqref{e:4 order eqn}. Finally, it's worth mentioning that the existence of weak solution and classical solution to~\eqref{e:4 order eqn} for $p>2$ was studied in~\cite{king2003fourth}.

A standard way to prove the local existence of a mild solution is to apply the Banach contraction mapping theorem, hence it suffices to argue that  $\mathcal{T}$ is a contraction map in a suitable adapted Banach space $\Tilde{\FS}_T$,  which is defined as follows. 

Given $0<T<1$, we let
\begin{equation*}
    \FS_T\defeq\{ u:\mathbb{R}_+\times\T^2\rightarrow\mathbb{R}\vert\sup_{0<t\leq T}t^{1/4}\norm{\grad u}_{L^2}<\infty\},
\end{equation*}
and 
\begin{equation*}
    \Tilde{\FS}_T\defeq C([0,T];L^2(\T^2))\cap\FS_T,
\end{equation*}
It is clear that $\Tilde{\FS}_T$ is a Banach space equipped with the norm:
\begin{equation*}
    \norm{ u}_{\Tilde{\FS}_T} \defeq \max\left\{\sup_{0\leq t\leq T}\norm{ u}_{L^2},\sup_{0< t\leq T}t^{1/4}\norm{\grad u}_{L^{2}}\right\}.
\end{equation*}
For completeness, we first recall and prove two useful estimates for the semigroup operator $e^{-t\FL}$ in the following two lemmas. Then we use these estimates to verify that $\mathcal{T}$ is a contraction map on a ball in $\Tilde{\FS}_T$.

\begin{lemma}
\label{lem: semi group on divergence}
For $2<p \leq 3$, there exists a constant $C$ such that 
\begin{equation*}
    \norm{e^{-t\FL}f}_{L^2}\leq Ct^{-\frac{p-2}{4}}\norm{f}_{L^\frac{2}{p-1}}.
\end{equation*}
\end{lemma}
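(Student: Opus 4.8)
The plan is to work entirely on the Fourier side, where $e^{-t\FL}$ acts as the diagonal multiplier $\widehat{e^{-t\FL}f}(\k)=e^{-t\abs{\k}^4}\hat f(\k)$. By Plancherel's theorem, $\norm{e^{-t\FL}f}_{L^2}=\big(\sum_{\k\in\Z^2}e^{-2t\abs{\k}^4}\abs{\hat f(\k)}^2\big)^{1/2}=\norm{e^{-t\abs{\k}^4}\hat f}_{\ell^2}$. This reduces the claim to a weighted $\ell^2$ estimate for the Fourier coefficients, which I would attack by splitting off the multiplier with H\"older's inequality and then invoking the Hausdorff--Young inequality to return to a physical-space $L^q$ norm of $f$.

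Concretely, I would first apply H\"older's inequality in $\k$ with a pair of exponents $a,b$ satisfying $\tfrac1a+\tfrac1b=\tfrac12$, obtaining $\norm{e^{-t\abs{\k}^4}\hat f}_{\ell^2}\le\norm{e^{-t\abs{\k}^4}}_{\ell^a}\,\norm{\hat f}_{\ell^b}$. The exponents are then chosen so that the second factor matches the target norm: taking $b=2/(3-p)$ (so that its conjugate is $b'=2/(p-1)$) and correspondingly $a=2/(p-2)$, the Hausdorff--Young inequality on $\T^2$ gives $\norm{\hat f}_{\ell^b}\le\norm{f}_{L^{b'}}=\norm{f}_{L^{2/(p-1)}}$, which is legitimate since $2<p\le 3$ forces $b\ge 2$ and $b'=2/(p-1)\in[1,2)$. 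At the endpoint $p=3$ one has $a=2,\ b=\infty$ and uses $\norm{\hat f}_{\ell^\infty}\le\norm{f}_{L^1}$. It then remains only to control the multiplier factor $\norm{e^{-t\abs{\k}^4}}_{\ell^a}=\big(\sum_{\k}e^{-a t\abs{\k}^4}\big)^{1/a}$.

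The heart of the matter is the short-time bound on this theta-type sum. Since $\xi\mapsto e^{-at\abs{\xi}^4}$ is radially decreasing, a standard comparison of the lattice sum with the integral yields $\sum_{\k\in\Z^2}e^{-a t\abs{\k}^4}\le C\int_{\R^2}e^{-at\abs{\xi}^4}\,d\xi=C' t^{-1/2}$ for $0<t\le 1$ (the substitution $\xi=(at)^{-1/4}\eta$ produces the factor $t^{-1/2}$). Hence $\norm{e^{-t\abs{\k}^4}}_{\ell^a}\le C\,t^{-1/(2a)}=C\,t^{-(p-2)/4}$, since $1/a=(p-2)/2$, and combining the three estimates proves the lemma. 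The one point that requires genuine care is this sum-versus-integral comparison, together with the observation that the stated power of $t$ reflects the \emph{short-time} regime $t\in(0,1]$ relevant to the local theory (for $t\ge 1$ the surviving zero mode makes $\norm{e^{-t\FL}f}_{L^2}$ bounded below, so the decaying bound is only meaningful for small $t$, which is exactly the range $T<1$ used in the contraction argument).

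As an alternative route, one could instead interpolate by Riesz--Thorin between the contraction estimate $\norm{e^{-t\FL}f}_{L^2}\le\norm{f}_{L^2}$ and the $L^1\to L^2$ bound $\norm{e^{-t\FL}f}_{L^2}\le\norm{K_t}_{L^2}\norm{f}_{L^1}$, where $K_t$ is the convolution kernel of $e^{-t\FL}$ and $\norm{K_t}_{L^2}^2=\sum_{\k}e^{-2t\abs{\k}^4}\le Ct^{-1/2}$ by Parseval. Interpolating with parameter $\theta=p-2$ yields the intermediate space $L^{2/(p-1)}\to L^2$ with the same power $t^{-(p-2)/4}$, confirming the estimate by a second argument.
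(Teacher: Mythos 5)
Your proof is correct and takes essentially the same route as the paper's: Plancherel, H\"older on the Fourier side with exponents $2/(p-2)$ and $2/(3-p)$, Hausdorff--Young to return to a physical-space norm, and a lattice-sum-versus-integral comparison producing the factor $t^{-(p-2)/4}$, with the same separate treatment of the endpoint $p=3$. Your exponent bookkeeping is in fact cleaner than the paper's displayed chain (which carries an apparent typo, writing $L^{2/(p-2)}$ where the conjugate of $\ell^{2/(3-p)}$ is $L^{2/(p-1)}$, as in the statement of the lemma), and your caveat that the bound is only meaningful for $t\le 1$ because the zero Fourier mode survives is a correct point that the paper leaves implicit.
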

\begin{proof}
We begin with the case when $2<p<3$. By definition of operator $e^{-t\FL}$ and Plancherel's identity, we can easily get:
\begin{align*}
   \norm{e^{-t\FL}f}_{L^2}^2 
   &=\sum_{\k\in\mathbb{Z}^2}e^{-2t\abs{\k}^4} \abs{\hat{f}(\k)}^2
   \leq \left(\sum_{k \in \Z^2} e^{-\frac{2t|\k|^4}{p-2}} \right)^{p-2} \left(\sum_{k \in \Z^2} \left|\hat{f}(\k) \right|^{\frac{2}{3-p}} \right)^{3-p} \\
   &\leq C\norm{f}_{L^\frac{2}{p-2}}^2\Big(\int_{\mathbb{R}^2}\, e^{-\frac{2t|x|^4}{p-2}} dx\Big)\leq C t^{-\frac{p-2}{2}} \norm{f}_{L^\frac{2}{p-2}}^2,
\end{align*}
where in the second last estimate above, we have used the assumption $p>2$ and the Hausdorff–Young inequality on $\T^2$. While for the case when $p=3$, we simply bound $\|e^{-t\FL} f\|_{L^2}^2$ by 
$$
\left\|\hat{f} \right\|^2_{\ell^\infty(\Z^2)} \cdot \sum_{\k \in \Z^2} e^{-2t|\k|^4}.
$$
The rest of the proof follows in a similar fashion and hence we omit it here. 
\end{proof}

\begin{lemma}
\label{lem: Hs of semigroup}
For any $s>0$, there exists a constant $C$ such that
\begin{equation*}
    \norm{(-\lap)^{s/2}e^{-t\FL}f}_{L^2}\leq Ct^{-\frac{s}{4}}\norm{f}_{L^2}.
\end{equation*}
\end{lemma}
\begin{proof}
The proof of Lemma \ref{lem: Hs of semigroup} is similar to the one of Lemma \ref{lem: semi group on divergence} and hence we would like to leave the detail to the interested reader. 
\end{proof}

The local existence of mild solution is summarized in the following main theorem.
\begin{theorem}
\label{thm:local mild soln}
Let $u_0\in L^{2}(\T^2)$ and $2<p <3$. Then there exists $0< T\leq 1$ depending only on $\norm{u_0}_{L^2}$ such that~\eqref{e:4 order eqn} admits a mild solution $u$ on $[0,T]$, which is unique in $\Tilde{\FS}_T$.
\end{theorem}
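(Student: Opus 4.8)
The plan is to realize the mild solution as the unique fixed point of the map $\calT$ from \eqref{e:volterra integral 1} via the Banach contraction mapping theorem on a closed ball $\overline{B}_R\defeq\{u\in\Tilde{\FS}_T:\norm{u}_{\Tilde{\FS}_T}\le R\}$. Concretely I would fix $R\defeq 2C\norm{u_0}_{L^2}$ and then choose $T\le 1$ small (depending on $\norm{u_0}_{L^2}$ and $p$) so that $\calT$ both maps $\overline{B}_R$ into itself and is a strict contraction there. The linear part is harmless: contractivity of $e^{-t\FL}$ on $L^2$ gives $\norm{e^{-t\FL}u_0}_{L^2}\le\norm{u_0}_{L^2}$, while Lemma~\ref{lem: Hs of semigroup} with $s=1$ gives $t^{1/4}\norm{\grad e^{-t\FL}u_0}_{L^2}\le C\norm{u_0}_{L^2}$, so the linear term contributes at most $C\norm{u_0}_{L^2}$ to the $\Tilde{\FS}_T$ norm. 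Everything then reduces to estimating the Duhamel term.

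The heart of the argument is a single estimate for $B(u)(t)\defeq\int_0^t\grad e^{-(t-s)\FL}\bigl(\abs{\grad u}^{p-2}\grad u\bigr)\,ds$, obtained by factoring $e^{-\tau\FL}=e^{-\tau\FL/2}e^{-\tau\FL/2}$ so the two lemmas combine: the first half-step moves the vector field $\abs{\grad u}^{p-2}\grad u$, which lies in $L^{2/(p-1)}$ with norm $\norm{\grad u}_{L^2}^{p-1}$, up to $L^2$ at the cost of $\tau^{-(p-2)/4}$ (Lemma~\ref{lem: semi group on divergence}), while the second half-step applies the needed derivatives at the cost of $\tau^{-1/4}$ each (Lemma~\ref{lem: Hs of semigroup}). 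This yields $\norm{\grad e^{-\tau\FL}(\abs{\grad u}^{p-2}\grad u)}_{L^2}\le C\tau^{-(p-1)/4}\norm{\grad u}_{L^2}^{p-1}$ for the $L^2$ part, and one extra derivative, i.e.\ $\tau^{-p/4}$, for the homogeneous part. Inserting $\norm{\grad u(s)}_{L^2}\le s^{-1/4}\norm{u}_{\Tilde{\FS}_T}$ turns both bounds into Beta-type integrals $\int_0^t(t-s)^{-a}s^{-b}\,ds$ equal to a constant times $t^{1-a-b}$; after the weight $t^{1/4}$ in the second case, both contributions emerge as $C\,t^{(3-p)/2}\norm{u}_{\Tilde{\FS}_T}^{p-1}$. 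I expect this bookkeeping to be the main obstacle, and it is exactly here that $2<p<3$ is used twice: it keeps the time exponents strictly below $1$ so the integrals converge, and it makes the residual exponent $(3-p)/2$ strictly positive, so the whole nonlinear contribution is $\le C\,T^{(3-p)/2}\norm{u}_{\Tilde{\FS}_T}^{p-1}$ and is small for small $T$.

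With this estimate the self-mapping property follows by taking $T$ so small that $C\,T^{(3-p)/2}R^{p-1}\le R/2$. For the contraction I would use the pointwise bound $\abs{F(\xi)-F(\eta)}\le C(\abs{\xi}^{p-2}+\abs{\eta}^{p-2})\abs{\xi-\eta}$, valid for $p>2$, together with the Hölder pairing $\tfrac{p-2}{2}+\tfrac12=\tfrac{p-1}{2}$ to get $\norm{F(\grad u)-F(\grad v)}_{L^{2/(p-1)}}\le C(\norm{\grad u}_{L^2}^{p-2}+\norm{\grad v}_{L^2}^{p-2})\norm{\grad u-\grad v}_{L^2}$; since the time exponent $s^{-(p-2)/4}\cdot s^{-1/4}=s^{-(p-1)/4}$ is identical to the one above, the same Beta integral reappears and gives $\norm{\calT(u)-\calT(v)}_{\Tilde{\FS}_T}\le C\,T^{(3-p)/2}(\norm{u}_{\Tilde{\FS}_T}^{p-2}+\norm{v}_{\Tilde{\FS}_T}^{p-2})\norm{u-v}_{\Tilde{\FS}_T}$. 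Shrinking $T$ further makes the prefactor $<1$ on $\overline{B}_R$, so Banach's theorem produces a unique fixed point in the ball.

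Finally I would record that $\calT(u)\in C([0,T];L^2)$ (continuity of the linear term is the strong continuity of the semigroup, and of the Duhamel term it follows from the estimates above with dominated convergence), that $t^{1/4}\norm{\grad\calT(u)}_{L^2}\to 0$ as $t\to 0^+$ for the nonlinear part, and promote uniqueness from the ball to all of $\Tilde{\FS}_T$: given any two mild solutions in $\Tilde{\FS}_T$, the contraction estimate on a short initial interval $[0,\tau]$ (with $\tau$ chosen against their finite $\Tilde{\FS}_T$ norms) forces them to agree there, and iterating over $[\tau,2\tau],\dots$ covers $[0,T]$.
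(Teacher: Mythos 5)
Your proposal is correct and follows essentially the same route as the paper: a Banach fixed point argument on a ball in $\Tilde{\FS}_T$, with the Duhamel term controlled by splitting $e^{-\tau\FL}=e^{-\tau\FL/2}e^{-\tau\FL/2}$ and combining Lemma~\ref{lem: semi group on divergence} (the $L^{2/(p-1)}\to L^2$ bound with cost $\tau^{-(p-2)/4}$) with Lemma~\ref{lem: Hs of semigroup}, leading to the same $T^{(3-p)/2}$ smallness factor and the same Lipschitz estimate via $\abs{F(\xi)-F(\eta)}\le C(\abs{\xi}^{p-2}+\abs{\eta}^{p-2})\abs{\xi-\eta}$. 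Your extra step upgrading uniqueness from the ball to all of $\Tilde{\FS}_T$ by iterating the contraction estimate over short subintervals is a welcome addition that the paper only asserts implicitly.
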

\begin{corollary}
\label{cor:L2 argument}
With the same assumptions in Theorem~\ref{thm:local mild soln}, if $T^*$ is the maximal time of existence of the mild solution $u$, then 
\begin{equation*}
    \limsup_{t\rightarrow T_-^*}\norm{u(t)}_{L^2(\T^2)}=\infty.
\end{equation*}
Otherwise, $T^*=\infty$.
\end{corollary}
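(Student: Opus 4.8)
The plan is to establish the familiar blow-up dichotomy by a continuation argument, exploiting the crucial feature of Theorem~\ref{thm:local mild soln} that the guaranteed existence time depends \emph{only} on the $L^2$ norm of the initial datum. Assume, toward a contradiction, that $T^* < \infty$ yet $\limsup_{t\to T^*_-}\norm{u(t)}_{L^2}=M<\infty$. First I would upgrade this to a uniform bound on the whole interval: by the definition of $\limsup$ there is a $\delta>0$ with $\norm{u(t)}_{L^2}\le M+1$ for $t\in[T^*-\delta,T^*)$, while on the compact piece $[0,T^*-\delta]$ the membership $u\in C([0,T^*);L^2)$ forces $\norm{u(t)}_{L^2}$ to be bounded; together these yield $M_0\defeq\sup_{0\le t<T^*}\norm{u(t)}_{L^2}<\infty$.

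Next I would invoke Theorem~\ref{thm:local mild soln} to produce a \emph{uniform} restart time. Because the equation is autonomous, the theorem applies verbatim to the time-translated Volterra problem based at any $t_0\in[0,T^*)$; and since the existence time there depends only on the $L^2$ norm of the data, while every $u(t_0)$ lies in $L^2$ with $\norm{u(t_0)}_{L^2}\le M_0$, there is a single $\tau=\tau(M_0)>0$ such that, for each such $t_0$, the equation with initial datum $u(t_0)$ admits a unique mild solution on $[t_0,t_0+\tau]$. The essential point is that $\tau$ does not degenerate as $t_0\uparrow T^*$.

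Then I would glue. Pick $t_0\in(T^*-\tau,T^*)$, so that $t_0+\tau>T^*$, and let $v$ be the mild solution on $[t_0,t_0+\tau]$ with $v(t_0)=u(t_0)$. Using the semigroup identity $e^{-(t-s)\FL}=e^{-(t-t_0)\FL}e^{-(t_0-s)\FL}$ to rewrite the Duhamel formula~\eqref{e:volterra integral 1} based at $0$ as one based at $t_0$, one checks that the restriction $u|_{[t_0,T^*)}$ solves the same fixed-point equation as $v$; since $t_0>0$ the smoothing of Lemma~\ref{lem: Hs of semigroup} places this restriction in the appropriate shifted space $\Tilde{\FS}$, so the uniqueness clause of Theorem~\ref{thm:local mild soln} forces $u=v$ on the overlap. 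Hence $v$ extends $u$ to a mild solution on $[0,t_0+\tau]$, an interval strictly larger than $[0,T^*)$, contradicting the maximality of $T^*$. This shows $T^*<\infty$ forces $\limsup_{t\to T^*_-}\norm{u(t)}_{L^2}=\infty$, and otherwise $T^*=\infty$.

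The main obstacle I anticipate is precisely the gluing step: one must verify carefully that the translated Duhamel formula is exactly the fixed-point equation solved by the restarted solution, and that $u|_{[t_0,t_0+\tau']}$ genuinely belongs to the shifted Banach space $\Tilde{\FS}$ so that the uniqueness statement applies. This is where the positivity $t_0>0$ and the smoothing estimates enter, rather than in the local theory itself; the rest of the argument is bookkeeping around the uniform lower bound on the restart time.
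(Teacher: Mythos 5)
Your argument is correct and is essentially the paper's own proof: both assume $T^*<\infty$ with a finite $\limsup$, use the fact that the local existence time of Theorem~\ref{thm:local mild soln} depends only on $\norm{u_0}_{L^2}$ to get a uniform restart time, restart at some $t_0$ close enough to $T^*$, identify the restarted solution with $u$ on the overlap by uniqueness in $\Tilde{\FS}_T$, and conclude by contradiction with maximality. Your treatment of the gluing step (translating the Duhamel formula and checking membership in the shifted space) is more explicit than the paper's, but it is the same route.
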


We divide the proof of main Theorem \ref{thm:local mild soln} into Lemma~\ref{lem:bdd of operator} and Lemma~\ref{lem:L-cts of operator}. In Lemma~\ref{lem:bdd of operator}, we show that the solution operator $\mathcal{T}$ is bounded. In Lemma~\ref{lem:L-cts of operator}, we show the map $\mathcal{T}$ is Lipshitz continuous on $\Tilde{\FS}_T$ with some constant that depends on $T$. Which state as follows:
\begin{lemma}
\label{lem:bdd of operator}
When $2<p < 3$ and $0<T\leq 1$, the map $\mathcal{T}$ maps from $ \Tilde{\FS}_T$ to itself, and there exists $C_1>0$, such that
\begin{equation}
\label{e:bdd of operator}
    \norm{\mathcal{T}( u)}_{\Tilde{\FS}_T}\leqslant C_1\Big(\norm{ u_0}_{L^2}+T^{\frac{3-p}{2}}\norm{ u}_{\Tilde{\FS}_T}^{p-1}\Big).
\end{equation}
\end{lemma}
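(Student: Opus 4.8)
The plan is to bound separately the two quantities defining $\norm{\cdot}_{\Tilde{\FS}_T}$, namely $\sup_{0\le t\le T}\norm{\mathcal{T}(u)(t)}_{L^2}$ and $\sup_{0<t\le T}t^{1/4}\norm{\grad\mathcal{T}(u)(t)}_{L^2}$, and to show each is controlled by the right-hand side of \eqref{e:bdd of operator}. Throughout I abbreviate $G\defeq\abs{\grad u}^{p-2}\grad u$, so that $\norm{G}_{L^{2/(p-1)}}=\norm{\grad u}_{L^2}^{p-1}$ by a direct pointwise computation, and I repeatedly use the semigroup identity $e^{-\tau\FL}=e^{-\tau\FL/2}e^{-\tau\FL/2}$ to split the smoothing effect between Lemma~\ref{lem: semi group on divergence} and Lemma~\ref{lem: Hs of semigroup}.

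For the $L^2$ part, the linear term satisfies $\norm{e^{-t\FL}u_0}_{L^2}\le\norm{u_0}_{L^2}$, since the symbol $e^{-t\abs{\k}^4}\le 1$ makes $e^{-t\FL}$ a contraction on $L^2$. For the Duhamel term I estimate the integrand $\norm{\grad e^{-(t-s)\FL}G(s)}_{L^2}$ by applying Lemma~\ref{lem: Hs of semigroup} with order $s=1$ to $e^{-(t-s)\FL/2}G(s)$ and then Lemma~\ref{lem: semi group on divergence} to $e^{-(t-s)\FL/2}G(s)$; this yields the bound $C(t-s)^{-(p-1)/4}\norm{\grad u(s)}_{L^2}^{p-1}$. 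Inserting $\norm{\grad u(s)}_{L^2}^{p-1}\le s^{-(p-1)/4}\norm{u}_{\Tilde{\FS}_T}^{p-1}$ and integrating leaves the Beta-type integral $\int_0^t(t-s)^{-(p-1)/4}s^{-(p-1)/4}\,ds=Ct^{(3-p)/2}$, which converges precisely because $(p-1)/4<1$. Since $t\le T\le 1$, this gives $\sup_t\norm{\mathcal{T}(u)(t)}_{L^2}\le\norm{u_0}_{L^2}+CT^{(3-p)/2}\norm{u}_{\Tilde{\FS}_T}^{p-1}$.

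For the gradient part I proceed analogously, now recording one extra derivative. The linear term obeys $t^{1/4}\norm{\grad e^{-t\FL}u_0}_{L^2}\le C\norm{u_0}_{L^2}$ directly from Lemma~\ref{lem: Hs of semigroup} with order $s=1$. For the Duhamel term, $\grad\mathcal{T}(u)$ picks up a second gradient, so after using $\norm{\grad\grad f}_{L^2}=\norm{(-\lap)f}_{L^2}$ (Plancherel) I must estimate $\norm{(-\lap)e^{-(t-s)\FL}G(s)}_{L^2}$; splitting the semigroup and applying Lemma~\ref{lem: Hs of semigroup} with order $s=2$ followed by Lemma~\ref{lem: semi group on divergence} gives $C(t-s)^{-p/4}\norm{\grad u(s)}_{L^2}^{p-1}$. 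The resulting integral $\int_0^t(t-s)^{-p/4}s^{-(p-1)/4}\,ds=Ct^{(5-2p)/4}$ converges because $p/4<1$, and multiplying by the weight $t^{1/4}$ restores the exponent $t^{(3-p)/2}$. Taking the maximum of the two bounds and absorbing all constants into a single $C_1$ produces \eqref{e:bdd of operator}, while the finiteness of these suprema together with the strong continuity of $e^{-t\FL}$ and the integrability of the Duhamel integrand shows $\mathcal{T}(u)\in\Tilde{\FS}_T$.

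The main obstacle is making the two convolution (Beta-function) integrals converge and cancel cleanly against the weight $t^{1/4}$: this forces the two-step splitting of the semigroup so that the divergence-gain estimate of Lemma~\ref{lem: semi group on divergence} can be paired with the correct fractional-derivative cost from Lemma~\ref{lem: Hs of semigroup}, and it is exactly the regime $2<p<3$ that keeps both singular exponents below $1$ while turning the leftover power of $t$ into the favorable factor $T^{(3-p)/2}$. The other delicate point is that the nonlinearity closes in $L^2$ only because the target Lebesgue index $2/(p-1)$ in Lemma~\ref{lem: semi group on divergence} is conjugate to $p-1$, so that $\norm{G}_{L^{2/(p-1)}}$ collapses to $\norm{\grad u}_{L^2}^{p-1}$ rather than a higher norm that the $\Tilde{\FS}_T$ structure could not control.
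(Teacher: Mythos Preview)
Your proof is correct and follows essentially the same approach as the paper's: both split the semigroup as $e^{-(t-s)\FL/2}e^{-(t-s)\FL/2}$, apply Lemma~\ref{lem: Hs of semigroup} (with order $1$ for the $L^2$ bound and order $2$ for the gradient bound) to one factor and Lemma~\ref{lem: semi group on divergence} to the other, then reduce to the same Beta-type integrals in $s$ to extract the factor $t^{(3-p)/2}$. The only cosmetic difference is that the paper crudely majorizes $(1-\tau)^{-(p-1)/4}$ by $(1-\tau)^{-1/2}$ in the first Beta integral, whereas you keep the sharp exponent; this changes nothing.
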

\begin{lemma}
\label{lem:L-cts of operator}
When $2<p < 3$ and $0<T\leq 1$, there exists a constant $C_2$ such that, for any $ u_1$, $ u_2\in\Tilde{\FS}_T$,
\begin{equation}
\label{e:L-cts of operator}
    \norm{\mathcal{T}( u_1)-\mathcal{T}( u_2)}_{\Tilde{\FS}_T}
    \leq C_2\,T^{\frac{3-p}{2}}\Big(\norm{ u_1}_{\Tilde{\FS}_T}^{p-2}+\norm{ u_2}_{\Tilde{\FS}_T}^{p-2}\Big)\norm{ u_1- u_2}_{\Tilde{\FS}_T}.
\end{equation}
\end{lemma}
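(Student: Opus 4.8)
The plan is to exploit the cancellation of the linear part in the Duhamel formula and then control the remaining nonlinear integral with the two smoothing estimates of Lemma~\ref{lem: semi group on divergence} and Lemma~\ref{lem: Hs of semigroup}. Writing $w\defeq\mathcal{T}(u_1)-\mathcal{T}(u_2)$, the initial-data terms $e^{-t\FL}u_0$ cancel, so
\begin{equation*}
    w(t)=-\int_0^t\grad e^{-(t-s)\FL}\,G(s)\,ds,\qquad
    G\defeq\abs{\grad u_1}^{p-2}\grad u_1-\abs{\grad u_2}^{p-2}\grad u_2 .
\end{equation*}
Since the $\Tilde{\FS}_T$-norm is the maximum of $\sup_t\norm{w(t)}_{L^2}$ and $\sup_t t^{1/4}\norm{\grad w(t)}_{L^2}$, I would estimate these two quantities separately and then take the maximum.

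The first key step is a pointwise estimate on the nonlinearity. With $F(\xi)=\abs{\xi}^{p-2}\xi$ one has $\abs{DF(\xi)}\le C\abs{\xi}^{p-2}$, so integrating $DF$ along the segment joining $\grad u_2$ to $\grad u_1$ and using $p-2\ge 0$ gives the bound $\abs{G}\le C(\abs{\grad u_1}^{p-2}+\abs{\grad u_2}^{p-2})\,\abs{\grad u_1-\grad u_2}$. Applying H\"older's inequality with the exponents $\tfrac{2}{p-2}$ and $2$ (which are conjugate to $\tfrac{2}{p-1}$, and which are admissible precisely for $2<p\le 4$) then yields
\begin{equation*}
    \norm{G(s)}_{L^{\frac{2}{p-1}}}\le C\big(\norm{\grad u_1(s)}_{L^2}^{p-2}+\norm{\grad u_2(s)}_{L^2}^{p-2}\big)\norm{\grad(u_1-u_2)(s)}_{L^2}.
\end{equation*}
Inserting the defining bound $\norm{\grad v(s)}_{L^2}\le s^{-1/4}\norm{v}_{\Tilde{\FS}_T}$ produces the temporal weight $s^{-(p-1)/4}$ multiplied by $\big(\norm{u_1}_{\Tilde{\FS}_T}^{p-2}+\norm{u_2}_{\Tilde{\FS}_T}^{p-2}\big)\norm{u_1-u_2}_{\Tilde{\FS}_T}$.

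The second step absorbs the derivatives and the loss of integrability through the semigroup by writing $e^{-(t-s)\FL}=e^{-(t-s)\FL/2}e^{-(t-s)\FL/2}$: the first factor passes $L^{2/(p-1)}$ to $L^2$ with a gain $(t-s)^{-(p-2)/4}$ via Lemma~\ref{lem: semi group on divergence}, and the second places the derivatives in $L^2$ via Lemma~\ref{lem: Hs of semigroup}. Since in frequency $\abs{i\k\cdot\hat G}\le\abs{\k}\abs{\hat G}$, the $L^2$-piece satisfies $\norm{\grad e^{-(t-s)\FL}G(s)}_{L^2}\le C(t-s)^{-(p-1)/4}\norm{G(s)}_{L^{2/(p-1)}}$ (using $s=1$ in Lemma~\ref{lem: Hs of semigroup}), while $\grad w$ carries two derivatives, so $\norm{\grad\grad e^{-(t-s)\FL}G(s)}_{L^2}\le C(t-s)^{-p/4}\norm{G(s)}_{L^{2/(p-1)}}$ (using $s=2$). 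After inserting the bound on $\norm{G}$, the two resulting time integrals $\int_0^t(t-s)^{-(p-1)/4}s^{-(p-1)/4}\,ds$ and $\int_0^t(t-s)^{-p/4}s^{-(p-1)/4}\,ds$ are of Beta type; by scaling they equal $t^{(3-p)/2}$ and $t^{(5-2p)/4}$ times finite Beta constants, the finiteness being exactly where $2<p<3$ enters. The first directly gives the factor $t^{(3-p)/2}$ for $\sup_t\norm{w}_{L^2}$, while the weight $t^{1/4}$ in the norm turns the second into $t^{(6-2p)/4}=t^{(3-p)/2}$; bounding $t\le T\le 1$ (legitimate because $3-p>0$) and taking the maximum yields the claimed inequality.

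The main obstacle I anticipate is the bookkeeping that makes the second step consistent: the H\"older split of the nonlinearity must be chosen so that the resulting temporal singularities $(t-s)^{-p/4}$ and $s^{-(p-1)/4}$ are \emph{jointly} integrable, and this, together with the sign of $3-p$ that makes the elementary bound $t\le T$ work in our favour rather than against us, is precisely where the hypothesis $p<3$ is needed. Everything else is a careful but routine chaining of the two semigroup lemmas.
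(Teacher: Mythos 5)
Your proposal is correct and follows essentially the same route as the paper: the linear parts cancel, the nonlinearity is controlled pointwise by $C(\abs{\grad u_1}^{p-2}+\abs{\grad u_2}^{p-2})\abs{\grad u_1-\grad u_2}$ and then in $L^{2/(p-1)}$ via H\"older with exponents $2/(p-2)$ and $2$, the semigroup is split as $e^{-(t-s)\FL/2}e^{-(t-s)\FL/2}$ to combine Lemma~\ref{lem: semi group on divergence} with Lemma~\ref{lem: Hs of semigroup} (with $s=1$ for the $L^2$ bound and $s=2$ for the gradient bound), and the resulting Beta-type integrals produce $t^{(3-p)/2}$ and $t^{(5-2p)/4}$ exactly as in the paper. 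The only minor imprecision is that convergence of the Beta integrals actually only requires $p<4$; the hypothesis $p<3$ is needed, as you also note, to make $3-p>0$ so that $t^{(3-p)/2}\le T^{(3-p)/2}$.
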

Assuming Lemma~\ref{lem:bdd of operator} and Lemma~\ref{lem:L-cts of operator}, we prove Theorem~\ref{thm:local mild soln} first, then use it to show Corollary~\ref{cor:L2 argument}.

\begin{proof}
[Proof of Theorem \ref{thm:local mild soln}]
Let $\mathbb{B}_{R}(0)$ denotes the closed ball centered at origin with radius $R$ in $\Tilde{\FS}_T$ space. Choose $R\geq 2 C_0\norm{u_0}_{L^2}$, where $C_0=\max\{1, C_1,C_2\}$, and $C_1, C_2$ are the constants in~\eqref{e:bdd of operator} and~\eqref{e:L-cts of operator}, and we assume that 
\begin{equation*}
    T\leq \min\Big\{1, (4C_0 R^{p-2})^{\frac{-2}{3-p}}\Big\}.
\end{equation*}
Then, Lemma~\ref{lem:bdd of operator} implies
\begin{equation*}
    \norm{\mathcal{T}(u)}_{\Tilde{\FS}_T}\leq R,\qquad\forall u\in\mathbb{B}_{R}(0).
\end{equation*}
On the other hand, Lemma~\ref{lem:L-cts of operator} yields for any $u_1,u_2\in\mathbb{B}_{R}(0)$ we have
\begin{equation*}
    \norm{\mathcal{T}(u_1)-\mathcal{T}(u_2)}_{\Tilde{\FS}_T}
    \leq \frac{\norm{u_1-u_2}_{\Tilde{\FS}_T}}{2}. 
\end{equation*}
By Banach contraction mapping theorem, there is a unique fixed point of $\mathcal{T}$ in $\mathbb{B}_{R}(0)$. By Definition~\ref{def:mild and weak soln}, $u$ is a mild solution to~\eqref{e:4 order eqn} with initial data $u_0$, which is also unique in $\Tilde{\FS}_T$.
\end{proof}

\begin{proof}[Proof of Corollary \ref{cor:L2 argument}]
We prove the result via contradiction. Assume $T^*<\infty$ and $\limsup\limits_{t\rightarrow T_-^*}\norm{u(t)}_{L^2(\T^2)}=\alpha<\infty$. By continuity, there exists $t^*$ such that for $\forall t^*<t<T^*$, we have $\norm{u(t)}_{L^2(\T^2)}\leq 3\alpha$. Take
\begin{equation*}
     T\leq \min\Big\{1, (4C_0 (3\alpha)^{p-2})^{\frac{-2}{3-p}}\Big\},
\end{equation*}
where $C_0$ is defined in the proof of Theorem~\ref{thm:local mild soln}. Now we pick up any $t_0$ belonging to $[t^*,T^*]$ s.t. $t_0+T\geq T^*$. Then by Theorem~\ref{thm:local mild soln}, there exists a mild solution $\bar{u}$ on $[t_0,t_0+T]$ with initial data $u(t_0)$ and $u=\bar{u}$ on $[t_0,T^*)$, according to the uniqueness of mild solution. Hence, the solution can be extended past $T^*$, which yields a contradiction.
\end{proof}
It now remains to check the lemmas. 

\begin{proof}[Proof of Lemma \ref{lem:bdd of operator}]
To prove Lemma~\ref{lem:bdd of operator}, it suffices to check following two claims:

\medskip

\emph{Claim 1}: If $ u\in\Tilde{\FS}_T$, then $\mathcal{T}( u)\in C([0,T];L^2(\T^2))$;

\medskip

\emph{Claim 2}: If $ u\in\Tilde{\FS}_T$, then $\sup\limits_{0<t\leq T} t^{1/4}\norm{\grad(\mathcal{T}( u))}_{L^2}<\infty$.

\medskip

Utilizing the estimates in Lemma~\ref{lem: semi group on divergence} and Lemma~\ref{lem: Hs of semigroup}, and the assumption $2<p \le 3$, we check \emph{Claim 1} first:
\begin{eqnarray*}
\|\calT(u)\|_{L^2}%
&\leq& \|u_0\|_{L^2}+\int_0^t \left\|\nabla e^{-(t-s)\FL} \left(|\nabla u|^{p-2} \nabla u \right) \right\|_{L^2} ds  \\
&\leq& \|u_0\|_{L^2}+\int_0^t \left\|\nabla e^{-\frac{(t-s)\FL}{2}} \right\|_{L^2 \to L^2} \left\|e^{-\frac{(t-s)\FL}{2}} \left(|\nabla u|^{p-2} \nabla u \right) \right\|_{L^2} ds  \\
&\leq& C \left(\|u_0\|_{L^2}+\int_0^t (t-s)^{-\frac{1}{4}} \cdot (t-s)^{-\frac{p-2}{4}}\|\nabla u\|_{L^2}^{p-1} ds \right) \\
&\leq&  C\Big(\norm{ u_0}_{L^2}+\int_0^t\,(t-s)^{-\frac{p-1}{4}}s^{-\frac{p-1}{4}}\big(s^{\frac{1}{4}}\norm{\grad u}_{L^2}\big)^{p-1}\,ds\Big)\\
&\leq&  C\Big(\norm{ u_0}_{L^2} + t^{\frac{3-p}{2}}\int_0^{1}\,(1-\tau)^{-\frac{1}{2}}\tau^{-\frac{p-1}{4}} \norm{ u}_{\Tilde{\FS}_T}^{p-1}\,d\tau\Big)\\
&\leq&  C\Big(\norm{ u_0}_{L^2}+t^{\frac{3-p}{2}}\norm{ u}_{\Tilde{\FS}_T}^{p-1}\Big),
\end{eqnarray*}
Similarly, for \emph{Claim 2}, we have 
\begin{eqnarray*}
\left\|\nabla(\calT(u))(t) \right\|_{L^2}%
&\leq& \left\|\nabla e^{-t\FL} u_0\right\|_{L^2} \\
&& \quad \quad  + \int_0^t \left\|\lap e^{-\frac{(t-s)\FL}{2}} \right\|_{L^2 \to L^2} \left\|e^{-\frac{(t-s)\FL}{2}} \left(|\nabla u|^{p-2} \nabla u \right) \right\|_{L^2} ds \\
&\leq& C \left( t^{-\frac{1}{4}}\|u_0\|_{L^2}+\int_0^t (t-s)^{-\frac{1}{2}} \cdot  (t-s)^{-\frac{p-2}{4}}\|\nabla u\|_{L^2}^{p-1} ds \right) \\
&=& C \left( t^{-\frac{1}{4}} \|u_0\|_{L^2}+\int_0^t (t-s)^{-\frac{p}{4}} s^{-\frac{p-1}{4}}\big(s^{\frac{1}{4}}\norm{\grad u}_{L^2}\big)^{p-1}\,ds\right)\\
&\leq& C \left(t^{-\frac{1}{4}} \|u_0\|_{L^2}+t^{\frac{5-2p}{4}} \int_0^1 (1-\tau)^{-\frac{p}{4}} \tau^{-\frac{p-1}{4}} d\tau \cdot \|u\|_{\Tilde{\FS}_T}^{p-1} \right) \\
&\leq& C \left(t^{-\frac{1}{4}} \|u_0\|_{L^2}+t^{\frac{5-2p}{4}} \|u\|_{\Tilde{\FS}_T}^{p-1} \right)
\end{eqnarray*}
Multiplying $t^{\frac{1}{4}}$ on both sides of the above estimate, we get
\begin{equation*}
     t^{\frac{1}{4}} \norm{\grad(\mathcal{T}(u)(t))}_{L^2} 
    \leq C\Big(\norm{ u_0}_{L^2}+t^{\frac{3-p}{2}}\norm{ u}_{\Tilde{\FS}_T}^{p-1}\Big),
\end{equation*}
for any $t\in[0,T]$. Combining the proof of two claims, the proof of Lemma~\ref{lem:bdd of operator} is completed. 
\end{proof}
Next, we check that the map $\mathcal{T}$ is Lipschitz on $\Tilde{\FS}_T$ with a constant that depends on $T$.

\begin{proof}[Proof of Lemma \ref{lem:L-cts of operator}]
Similar to the proof of previous lemma, it is sufficient for us to check the following two claims. Let $ u_1, u_2\in\Tilde{\FS}_T$. For all $0\leq t\leq T$:

\emph{Claim 1}.
\begin{equation*}
    \norm{\mathcal{T}(u_1)(t)-\mathcal{T}( u_2)(t)}_{L^2}
    \leqslant C t^{\frac{3-p}{2}}\big(\norm{ u_1}_{\Tilde{\FS}_T}^{p-2}+\norm{ u_2}_{\Tilde{\FS}_T}^{p-2}\big)\norm{ u_1- u_2}_{\Tilde{\FS}_T};
\end{equation*}

\medskip

\emph{Claim 2}.
\begin{equation*}
    \sup_{t\in[0,T]}t^{\frac{1}{4}}\norm{\grad\mathcal{T}( u_1)(t)-\grad\mathcal{T}( u_2)(t)}_{L^2}
    \leqslant C t^{\frac{3-p}{2}}\big(\norm{ u_1}_{\Tilde{\FS}_T}^{p-2}+\norm{ u_2}_{\Tilde{\FS}_T}^{p-2}\big)\norm{ u_1- u_2}_{\Tilde{\FS}_T}.
\end{equation*}

\medskip

\emph{Proof of Claim 1}: For any $0 \le t \le T$, 
\begin{eqnarray*}
&&\left\|\calT(u_1)-\calT(u_2) \right\|_{L^2}\leq \int_0^t \left\|\nabla e^{-(t-s) \FL} \left(|\nabla u_1|^{p-2} \nabla u_1-|\nabla u_2|^{p-2} \nabla u_2 \right) \right\|_{L^2} ds \\
&& \quad \quad \quad \quad  \leq C \int_0^t (t-s)^{-\frac{1}{4}} \cdot (t-s)^{-\frac{p-2}{4}} \left\||\nabla u_1|^{p-2} \nabla u_1-|\nabla u_2|^{p-2} \nabla u_2 \right\|_{L^{\frac{2}{p-1}}} ds \\
&&  \quad \quad \quad \quad  \leq C \int_0^t (t-s)^{-\frac{p-1}{4}} \left\| \left|\nabla u_1-\nabla u_2 \right| \left(|\nabla u_1|^{p-2}+|\nabla u_2|^{p-2} \right) \right\|_{L^{\frac{2}{p-1}}} ds \\
&& \quad \quad \quad \quad \leq C \int_0^t (t-s)^{-\frac{p-1}{4}}  \|\nabla u_1-\nabla u_2\|_{L^2} \left( \|\nabla u_1\|_{L^2}^{p-2}+\|\nabla u_2 \|_{L^2}^{p-2} \right)ds  \\
&& \quad \quad \quad \quad \leq C\int_0^t  (t-s)^{-\frac{p-1}{4}} s^{-\frac{p-1}{4}} \|u_1-u_2\|_{\Tilde{\FS}_T} \left(\|u_1\|_{\Tilde{\FS}_T}^{p-2}+\|u_2\|^{p-2}_{\Tilde{\FS}_T} \right) ds \\
&& \quad \quad \quad \quad \leq Ct^{\frac{3-p}{2}} \left(\|u_1\|_{\Tilde{\FS}_T}^{p-2}+\|u_2\|^{p-2}_{\Tilde{\FS}_T} \right)\|u_1-u_2\|_{\Tilde{\FS}_T},
\end{eqnarray*}
where in the above estimate, we have used the following elementary estimate: for any $a, b \in \R^2$ and $p \ge 2$, 
$$
\left| |a|^{p-2}a-|b|^{p-2} b\right| \leq C_p (|a|^{p-2}+|b|^{p-2})|a-b|.
$$

\medskip

\emph{Proof of Claim 2:}  The proof is similar to the one of \emph{Claim 2} in the previous lemma:
\begin{eqnarray*}
&&\left\|\nabla \calT(u_1)-\nabla T(u_2) \right\|_{L^2} \leq \int_0^t \left\|\lap e^{-(t-s)\FL} \left(|\nabla u_1|^{p-2} \nabla u_1-|\nabla u_2|^{p-2}\nabla u_2 \right)\right\|_{L^2} ds \\
&& \quad \quad \quad \quad \quad \quad \leq \int_0^t (t-s)^{-\frac{p}{4}} \|\nabla u_1-\nabla u_2\|_{L^2} \left(\|\nabla u_1\|_{L^2}^{p-2}+\|\nabla u_2\|_{L^2}^{p-2} \right) ds \\
&&  \quad \quad \quad \quad \quad \quad \leq Ct^{\frac{5-2p}{4}} \left(\|u_1\|_{\Tilde{\FS}_T}^{p-2}+\|u_2\|^{p-2}_{\Tilde{\FS}_T} \right)\|u_1-u_2\|_{\Tilde{\FS}_T}.
\end{eqnarray*}
Again, multiplying $t^{\frac{1}{4}}$ on the both sides of the above estimate yields 
\begin{equation*}
    t^{\frac{1}{4}}\norm{\grad\mathcal{T}(u_1)-\grad\mathcal{T}(u_2)}_{L^2}
    \leq C t^{\frac{3-p}{2}}\big(\norm{ u_1}_{\Tilde{\FS}_T}^{p-2}+\norm{ u_2}_{\Tilde{\FS}_T}^{p-2}\big)\norm{ u_1- u_2}_{\Tilde{\FS}_T},
\end{equation*}
for any $t\in[0,T]$. The proof of Lemma \ref{lem:L-cts of operator} is complete. 
\end{proof}

Finally, we show that the mild solution given in Theorem~\ref{thm:local mild soln} is also a weak solution in the sense of Definition~\ref{def:mild and weak soln}. 

\begin{proposition}
\label{prop:mild equi with weak}
Let $2<p<3$ and $u$ be the mild solution on $[0,T]$ given in Theorem~\ref{thm:local mild soln}. Then it is also a weak solution of~\eqref{e:4 order eqn} on $[0,T]$. Moreover, the following energy identity holds: for any $t\in[0,T)$, 
\begin{equation}
\label{e:energy identity}
    \norm{u(t)}_{L^2}^2 + 2\int_0^t\,\norm{\lap u(s)}_{L^2}^2\,ds
    =\norm{u_0}_{L^2}^2 + 2\int_0^t\,\norm{\grad u(s)}_{L^p}^p ds.
\end{equation}
\end{proposition}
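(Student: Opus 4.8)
The plan is to first upgrade the regularity of the mild solution $u\in\Tilde{\FS}_T$ produced by Theorem~\ref{thm:local mild soln} to the class required of a weak solution, namely $u\in L^\infty([0,T];L^2)\cap L^2([0,T];H^2)$ with $\partial_t u\in L^2([0,T];H^{-2})$, and then to deduce both the weak formulation and the energy identity from this regularity by standard duality arguments. The bound $u\in L^\infty L^2$ is immediate, since $\sup_{[0,T]}\norm{u}_{L^2}\le\norm{u}_{\Tilde{\FS}_T}$ by definition of the norm. The technical heart of the proof is therefore the estimate $u\in L^2([0,T];H^2)$, and it is exactly here that the restriction $p<3$ is used.

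To establish this, I would split $u(t)=e^{-t\FL}u_0+w(t)$ with $w(t)=-\int_0^t\grad e^{-(t-s)\FL}(\abs{\grad u}^{p-2}\grad u)\,ds$, and bound $\norm{\lap u}_{L^2}$ separately on each piece. For the linear part a direct Plancherel computation gives $\int_0^T\norm{\lap e^{-t\FL}u_0}_{L^2}^2\,dt=\sum_{\k}\abs{\k}^4\abs{\hat u_0(\k)}^2\int_0^Te^{-2t\abs{\k}^4}\,dt\le\tfrac12\norm{u_0}_{L^2}^2$, the usual parabolic gain of two derivatives in $L^2_t$. For the Duhamel part, writing $G\defeq\abs{\grad u}^{p-2}\grad u$ and noting $\norm{G}_{L^{2/(p-1)}}=\norm{\grad u}_{L^2}^{p-1}$, I would distribute the third-order operator $\grad\lap$ and the smoothing of Lemma~\ref{lem: semi group on divergence} across the semigroup to obtain $\norm{\grad\lap e^{-\tau\FL}G}_{L^2}\le C\tau^{-3/4}\tau^{-(p-2)/4}\norm{\grad u}_{L^2}^{p-1}=C\tau^{-(p+1)/4}\norm{\grad u}_{L^2}^{p-1}$, using Lemma~\ref{lem: Hs of semigroup} with $s=3$. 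Inserting $\norm{\grad u(s)}_{L^2}\le s^{-1/4}\norm{u}_{\Tilde{\FS}_T}$ and evaluating the resulting Beta integral yields $\norm{\lap w(t)}_{L^2}\le C\,t^{(2-p)/2}\norm{u}_{\Tilde{\FS}_T}^{p-1}$; both the time singularity at $s=t$ and the terminal factor $t^{(2-p)/2}$ are square-integrable on $[0,T]$ precisely when $p<3$, so $w\in L^2([0,T];\dot H^2)$. Since $u\in L^\infty L^2$, this gives $u\in L^2([0,T];H^2)$.

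With $u\in L^2 H^2$ in hand, I would read $\partial_t u=-\lap^2u-\grad\cdot G$ off the equation and check $\partial_t u\in L^2([0,T];H^{-2})$ term by term: the biharmonic term is controlled since $\lap^2\colon H^2\to H^{-2}$ is bounded, and for $\grad\cdot G$ I would use $G\in L^{2/(p-1)}$ together with the torus embedding $W^{-1,2/(p-1)}\hookrightarrow H^{-2}$ (dual to $H^2\hookrightarrow W^{1,2/(3-p)}$), which gives $\norm{\grad\cdot G}_{H^{-2}}\le C\norm{\grad u}_{L^2}^{p-1}\in L^2_t$, again for $p<3$. The mild solution with this regularity is then a strong solution of the inhomogeneous linear problem with forcing $-\grad\cdot G$, so $\partial_t u+\lap^2u=-\grad\cdot G$ holds in $L^2([0,T];H^{-2})$; pairing this against a test function $\phi\in C_c^\infty$ and integrating by parts in $t$ (legitimate since $u\in C([0,T];L^2)$) produces exactly the weak formulation~\eqref{e:weak soln}.

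Finally, for the energy identity, since $u\in L^2([0,T];H^2)$ and $\partial_t u\in L^2([0,T];H^{-2})$, the Lions--Magenes lemma guarantees that $t\mapsto\norm{u(t)}_{L^2}^2$ is absolutely continuous with $\tfrac{d}{dt}\norm{u}_{L^2}^2=2\langle\partial_t u,u\rangle_{H^{-2},H^2}$. I would then evaluate $\langle\lap^2u,u\rangle=\norm{\lap u}_{L^2}^2$ and $\langle\grad\cdot G,u\rangle=-\int_{\T^2}G\cdot\grad u\,dx=-\norm{\grad u}_{L^p}^p$; the integration by parts is justified because $G\cdot\grad u=\abs{\grad u}^p\in L^1$ by H\"older with the conjugate pair $\tfrac{2}{p-1},\tfrac{2}{3-p}$ (indeed $\grad u\in L^{2/(3-p)}\hookrightarrow L^p$ on the torus since $(p-1)(p-2)\ge0$). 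Integrating in time from $0$ to $t$ and multiplying by two gives~\eqref{e:energy identity}, where finiteness of $\int_0^t\norm{\grad u}_{L^p}^p\,ds$ follows from the Gagliardo--Nirenberg bound $\norm{\grad u}_{L^p}^p\le C\norm{u}_{H^2}^{p-1}\norm{u}_{L^2}$ combined with $u\in L^2 H^2\cap L^\infty L^2$. I expect the Duhamel $L^2H^2$ estimate to be the main obstacle, since it is the single place where the admissible range of $p$ is saturated; the remaining steps are routine once the integrability exponents are tracked so that every pairing and integration by parts is consistent.
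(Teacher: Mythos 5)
Your argument is correct, and it takes a genuinely different route from the paper's. The paper never proves $u\in L^2([0,T];H^2)$ directly: it truncates to $[\varepsilon,T]$ (where $u(\varepsilon)\in \dot H^1$ with norm $\lesssim\varepsilon^{-1/4}$), establishes the weak formulation and the energy identity on $[\varepsilon,t]$ by an explicit pairing of the Duhamel formula against $e^{-(t-\varepsilon)\FL}\phi(t)$, then uses Gagliardo--Nirenberg to absorb $\int_\varepsilon^t\norm{\grad u}_{L^p}^p$ into $\int_\varepsilon^t\norm{\lap u}_{L^2}^2$ and obtain an $\varepsilon$-uniform bound, and finally passes to the limit $\varepsilon\to0$. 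You instead get $u\in L^2([0,T];H^2)$ on the whole interval in one shot, by the exact Plancherel identity $\int_0^T\norm{\lap e^{-t\FL}u_0}_{L^2}^2\,dt\le\tfrac12\norm{u_0}_{L^2}^2$ for the free part and the same $(t-s)^{-(p+1)/4}s^{-(p-1)/4}$ Beta-integral the paper uses for the Duhamel part; this eliminates the truncation and the limiting argument entirely, and your exponent bookkeeping (the embeddings $W^{-1,2/(p-1)}\hookrightarrow H^{-2}$ and $L^{2/(3-p)}\hookrightarrow L^p$, the Lions--Magenes lemma, and the Gagliardo--Nirenberg bound $\norm{\grad u}_{L^p}^p\le C\norm{u}_{H^2}^{p-1}\norm{u}_{L^2}$) is all correct and saturates $p<3$ in exactly the places you identify. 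The one step you treat more lightly than the paper is the assertion that a mild solution with this regularity satisfies $\partial_t u+\lap^2u=-\grad\cdot G$ in $L^2([0,T];H^{-2})$: this ``mild $\Rightarrow$ weak'' passage is standard semigroup theory once $G\in L^2_t L^{2/(p-1)}$ is known, but it is precisely the step on which the paper spends most of its effort (the differentiation of $t\mapsto\int e^{-(t-\varepsilon)\FL}\phi(t)\,u\,dx$ and the approximation by $C^1$ test functions), so in a full write-up you should either cite the relevant semigroup lemma or reproduce that computation.
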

\begin{proof}
First of all, observe that for any $0<\varepsilon \le t \le T$, one can rewrite the mild solution $u(t)$ with following expression on $[\epsilon, T]$:
\begin{equation}
\label{e:mild with epsilon}
    u(t)=e^{-(t-\epsilon)\FL}u(\epsilon)\,-\,\int_{\epsilon}^{t}\,\nabla e^{-(t-s)\FL}\, F(\grad u(s))\,ds,
\end{equation}
where we recall that $F(\xi)=|\xi|^{p-2}\xi, \xi \in \R^2$. Here the equality holds as functions in $C([\epsilon,T]; L^2(\T^2))$. Indeed, by the property of semigroup and Definition~\ref{def:mild and weak soln}, it follows that:
\begin{align*}
    &u(t)-u(\epsilon)=\left(e^{-(t-\epsilon)\FL}-I\right)\left(e^{-\epsilon\FL}u_0-\int_0^{\epsilon}\grad e^{-(\epsilon-s)\FL}F\left(\grad u\right)\left(s\right)ds\right)\\
    &  \quad \quad \quad  \quad \quad \quad \quad \quad \quad  -\int_{\epsilon}^{t}\grad e^{-(t-s)\FL}F\left(\grad u\right)\left(s\right)ds \\
    &\quad \quad \quad \quad \quad =\left(e^{-(t-\epsilon)\FL}-I\right)u(\epsilon)-\int_{\epsilon}^{t}\grad e^{-(t-s)\FL}F\left(\grad u\right)\left(s\right)ds,
\end{align*}
where $I$ is the identity map. Next, we show that this actually implies $u\in L^2([\epsilon,T]; H^2(\T^2))$. Since $u\in C([\epsilon,T]; L^2(\T^2))$, it is enough to check $\lap u\in L^2([\epsilon,T]\times\T^2)$, therefore we take $\lap$ to both sides of~\eqref{e:mild with epsilon}, and use the decay estimates of the semigroup $e^{-t\FL}$ also the fact that $u\in\Tilde{\FS}_T$ on $[\epsilon,T]$:
\begin{align*}
    \norm{\lap u(t)}_{L^2}
    &\leq C\Big((t-\epsilon)^{-\frac{1}{4}}\norm{u(\epsilon)}_{\dot{H}^1}+\int_{\epsilon}^{t}\,(t-s)^{-\frac{3}{4}} \cdot (t-s)^{-\frac{p-2}{4}} \cdot \norm{\grad u}_{L^{2}}^{p-1}\,ds\Big)\\
    &= C \left( (t-\varepsilon)^{-\frac{1}{4}} \|u(\varepsilon)\|_{\dot{H}^1}+\int_\varepsilon^t (t-s)^{-\frac{p+1}{4}} \|\nabla u\|_{L^2}^{p-1} ds\right) \\
    &\leq C\Big((t-\epsilon)^{-\frac{1}{4}}\epsilon^{-\frac{1}{4}}\norm{u}_{\Tilde{\FS}_T}+t^{\frac{2-p}{2}} \norm{u}_{\Tilde{\FS}_T}^{p-1}\Big). 
\end{align*}
Note for the last inequality, we implicitly use the fact that $p<3$. Also notice that the right hand side is square integrable in time for $p<3$, $\epsilon>0$. As some consequence, we have the following claims.

\medskip

\textit{Claim 1:} $F(\nabla u) \in L^2 \left( \left[\varepsilon, T \right]; L^{\frac{1}{p-2}}(\T^2) \right)$. 

\medskip

\textit{Claim 2:} $N(u) \in L^1 \left( \left[\varepsilon, T\right] \times \T^2 \right)$. 

\medskip

The first claim is a consequence of the Gagliardo-Nirenberg's inequality (note that $\frac{1}{2}< \frac{1}{p-1} < 1$): 
\begin{eqnarray} \label{20200702eq01}
&&\int_{\epsilon}^{T}\,\norm{F(\grad u)}^2_{L^{\frac{1}{p-2}}}\,ds= \int_{\epsilon}^{T}\,\norm{\grad u}^{2(p-1)}_{L^{\frac{p-1}{p-2}}}\,ds \nonumber \\
&& \quad \quad \leq C \int_\varepsilon^T \left( \left\|\lap u \right\|_{L^2}^2 \left\|u \right\|_{L^2}^{2(p-2)}+\|u\|_{L^2}^{2(p-1)} \right) ds  \nonumber \\
&& \quad \quad \leqslant C \left( \norm{u}_{C([\epsilon,T]; L^2(\T^2))}^{2(p-2)}\norm{u}_{L^2([\epsilon,T]; H^2(\T^2))}^2+T \norm{u}_{C([\epsilon,T]; L^2(\T^2))}^{2(p-1)} \right)
\end{eqnarray}
While for the second claim, we first compute that
$$
\left|N(u) \right|=\left| \nabla \cdot \left(|\nabla u|^{p-2} \nabla u \right) \right| \leq \left|\nabla u \right|^{p-2} \left|\lap u \right|+(p-2)\left|\nabla u \right|^{p-4}  \left| \left(\nabla u \right)^T \nabla^2 u \nabla u \right|, 
$$
where $v^T$ denotes the transpose of a vector $v \in \R^2$. This gives
\begin{eqnarray*}
&&\int_{\varepsilon}^T \left\|N(u) \right\|_{L^1}ds \leq \int_\varepsilon^T \left\|\left|\nabla u \right|^{p-2} \left|\lap u \right| \right\|_{L^1} ds \\
&&   \quad \quad  \quad \quad  \quad \quad  \quad \quad  \quad \quad  \quad \quad  \quad \quad  + (p-2)\int_\varepsilon^T \left\|\left|\nabla u \right|^{p-4}  \left| \left(\nabla u \right)^T \nabla^2 u \nabla u \right| \right\|_{L^1} ds \\
&& \quad \quad  \quad = \int_\varepsilon^T \left\|\left|\nabla u \right|^{p-2} \left|\lap u \right| \right\|_{L^1} ds+ (p-2)\int_\varepsilon^T \left\|\left|\nabla u \right|^{p-2} \cdot \frac{\left| \left(\nabla u \right)^T \nabla^2 u \nabla u \right|}{|\nabla u|^2} \right\|_{L^1} ds \\
&& \quad \quad  \quad  \leq  C \int_\varepsilon^T \norm{\grad u}_{L^{2(p-2)}}^{p-2}\norm{\lap u}_{L^2}\,ds   \\
&& \quad \quad \quad \leq C \int_\varepsilon^T \|\lap u \|_{L^2}^2 ds + C \int_\varepsilon^T \|\nabla u\|_{L^{2(p-2)}}^{2(p-2)} ds \\
&& \quad \quad \quad \le C\|u\|_{L^2([\varepsilon, T]; H^2(\T^2)}^2+C \|u\|_{\Tilde{S}_T}^{2(p-2)}<\infty. 
\end{eqnarray*}
Hence, the second claim is verified. 

\medskip

Let $\phi\in C_c^{\infty}([\epsilon,T)\times \T^2)$ and consider the $L^2$ pairing of $\phi$ with $u$. Observe that $e^{-t\FL}$ is a self-adjoint operator on $L^2$, and $\grad$ operator commutes with $e^{-t\FL}$. This gives 
\begin{align*}
    &\int_{\T^2}\,\phi(t)u(t)dx=\int_{\T^2}\,e^{-(t-\epsilon)\FL}\phi(t)u(\epsilon)dx\, \\
    &  \quad \quad \quad  \quad \quad \quad \quad \quad \quad  +\,\int_{\epsilon}^t\,\int_{\T^2}\,(e^{-(t-s)\FL}\grad\phi(t))\cdot F(\grad u(s))\,dx ds. 
\end{align*}
 Note that $\phi(t)$ and $\grad\phi(t)$ are in the domain of $\FL$ for all $t\in[\epsilon,T)$, and $\FL$ generates an analytic semigroup. Therefore, we have
\begin{equation*}
    \lim_{h\rightarrow 0}\frac{e^{-(t+h-\epsilon)\FL}\phi(t)-e^{-(t-\epsilon)\FL}\phi(t)}{h}=-e^{-(t-\epsilon)\FL}\FL\phi(t),
\end{equation*}
%%strongly in $L^2(\T^2)$, similarly for $\grad\phi$. On the other hand, by the %%smoothness of $\phi$, we also have
\begin{equation*}
    \lim_{h\rightarrow 0}\frac{e^{-(t-\epsilon)\FL}\phi(t+h)-e^{-(t-\epsilon)\FL}\phi(t)}{h}=e^{-(t-\epsilon)\FL}\partial_t \phi(t),
\end{equation*}
both strongly in $L^2(\T^2)$, similarly if $\phi$ is replaced by $\grad\phi$ . Since $e^{-t\FL}$ is a strongly continuous semigroup on $L^2(\T^2)$, it follows that $\frac{d}{dt}e^{-(t-\epsilon)\FL}\phi(t)\in C([\epsilon,T);L^2(\T^2))$ and by Leibniz rule:
\begin{equation*}
    \frac{d}{dt}e^{-(t-\epsilon)\FL}\phi(t)=
    e^{-(t-\epsilon)\FL}\partial_t\phi(t)-\FL e^{-(t-\epsilon)\FL}\phi(t).
\end{equation*}
In particular, the pairing of $e^{-(t-\epsilon)\FL}\phi(t)$ with any function $f(t)$ that is absolutely continuous in $t\in[\epsilon,T)$ with values in $L^2(\T^2)$, is differentiable a.e. in $t$, the derivative is integrable in time, and 
\begin{eqnarray}
\label{e:derivative of pairing}
&&\frac{d}{dt}\int_{\T^2}\,e^{-(t-\epsilon)\FL}\phi(t)f(t)\,dx =\int_{\T^2}\,\Big(-\FL e^{-(t-\epsilon)\FL}\phi(t)f(t)+e^{-(t-\epsilon)\FL}\partial_t\phi(t)f(t) \nonumber \\
&& \quad \quad\quad \quad\quad \quad\quad \quad\quad \quad\quad \quad\quad \quad \quad \quad\quad \quad\quad\quad \quad \quad \quad+e^{-(t-\epsilon)\FL}\phi(t)f'(t)\Big)\,dx.
\end{eqnarray}
Similarly with $\phi$ being replaced by $\nabla \phi$, 
\begin{eqnarray}
\label{e:derivative of pairing with grad}
&&\frac{d}{dt}\int_{\T^2}\,e^{-(t-\epsilon)\FL}\grad\phi(t)\cdot f(t)\,dx =\int_{\T^2}\,\Big(-\FL e^{-(t-\epsilon)\FL}\grad\phi(t)\cdot f(t) \nonumber \\
&&  \quad \quad \quad \quad \quad \quad \quad \quad \quad \quad \quad  +e^{-(t-\epsilon)\FL}\partial_t\grad\phi(t)\cdot f(t) +e^{-(t-\epsilon)\FL}\grad\phi(t)\cdot f'(t)\Big)\,dx.
\end{eqnarray}
Recall the following property from functional analysis: if $X$ is a separable and reflexive Banach space, then for a $X$-valued function of time is absolute continuous if and only if it has weak derivative for almost every time, and is integrable in the B\"{o}chner sense. By using~\eqref{e:derivative of pairing} and~\eqref{e:derivative of pairing with grad},  we have
\begin{align*}
    &\frac{d}{dt}\int_{\T^2}\,\phi(t)u(t)\,dx=\int_{\T^2}\,\Big(e^{-(t-\epsilon)\FL}\partial_t\phi(t)-e^{-(t-\epsilon)\FL}\FL\phi(t)\Big)u(\epsilon)\,dx\\\notag
    & \quad \quad  +\int_{\T^2}\,\int_{\epsilon}^{t}\,\Big(e^{-(t-s)\FL}\partial_t\big(\grad\phi(t)\big)-\FL e^{-(t-s)\FL}\grad\phi(t)\Big)\cdot\Big(\abs{\grad u(s)}^{p-2}\grad u(s)\Big)\,dt dx\notag\\\notag
    & \quad \quad  +\int_{\T^2}\,\grad\phi(t)\cdot\Big(\abs{\grad u(t)}^{p-2}\grad u(t)\Big)\,dx.
\end{align*}
Rearranging the terms, we see that 
\begin{align*}
    &\frac{d}{dt}\int_{\T^2}\,\phi(t)u(t)\,dx\\
    &\quad =\int_{\T^2}\,\partial_t\phi(t)\Big(e^{-(t-\epsilon)\FL}u(\epsilon)-\int_{\epsilon}^t\,\nabla  e^{-(t-s)\FL} \big(\abs{\grad u}^{p-2}\grad u\big)(s)\,ds\Big)\,dx\notag\\
    &\quad \quad \quad  -\int_{\T^2}\,\FL\phi(t)\Big(e^{-(t-\epsilon)\FL}u(\epsilon)-\int_{\epsilon}^t\,\nabla e^{-(t-s)\FL} \big(\abs{\grad u}^{p-2}\grad u\big)(s)\,ds\Big)\,dx\notag\\
    &\quad \quad \quad  +\int_{\T^2}\,\grad\phi(t)\cdot\Big(\abs{\grad u(t)}^{p-2}\grad u(t)\Big)\,dx\notag\\\notag
    &\quad  =\int_{\T^2}\,\big(\partial_t\phi(t)-\FL\phi(t)\big)u(t)dx\,+\,\int_{\T^2}\,\grad\phi(t)\cdot\Big(\abs{\grad u(t)}^{p-2}\grad u(t)\Big)\,dx.
\end{align*}
Integrating the above identity from $\epsilon$ to $t$ for $t\in(\epsilon,T)$ yields: 
\begin{eqnarray}
\label{e: prelimit identity}
&&\int_{\T^2}\,\phi(t)u(t)\,dx\,-\,\int_{\T^2}\,\phi(\epsilon)u(\epsilon)\,dx =\int_{\epsilon}^t\int_{\T^2}\,\partial_{t}\phi(s)u(s)\,dx ds\, \nonumber \\
&& \quad \quad \quad \quad -\,\int_{\epsilon}^t\int_{\T^2}\,\lap\phi(s)\lap u(s)dx ds  +\,\int_{\epsilon}^t\int_{\T^2}\,\grad\phi(s)\cdot\Big(\abs{\grad u(s)}^{p-2}\grad u(s)\Big)\,dx ds. 
\end{eqnarray}
We claim that $\partial_t u \in L^2 \left([\varepsilon, T); H^{-2} \right)$. To see this, we note that it suffices to show that \eqref{e: prelimit identity} is well-defined for $\phi \in L^2 \left( \left[ \varepsilon, t \right]; H^2 \right)$ with $\partial_t \phi \in L^2 \left( \left[ \varepsilon, t \right]; H^{-2} \right)$. The key observation here is to show that last term in \eqref{e: prelimit identity} is well-defined under such an assumption. Indeed, 
\begin{eqnarray*}
&& \,\int_{\epsilon}^t\int_{\T^2}\,\grad\phi(s)\cdot\Big(\abs{\grad u(s)}^{p-2}\grad u(s)\Big)\,dx ds=\int_\varepsilon^t \int_{\T^2} \nabla \phi(s) \cdot F \left(\nabla u(s) \right) dxds \\
&& \quad \quad \quad \leq \int_\varepsilon^t \left\|\nabla \phi(s)\right\|_{L^{\frac{1}{3-p}}} \cdot \left\|F(\nabla u(s)) \right\|_{L^{\frac{1}{p-2}}} ds \\
&& \quad \quad \quad \leq \left(\int_\varepsilon^t \left\|\nabla \phi\right\|_{L^{\frac{1}{3-p}}}^2 ds \right)^{\frac{1}{2}} \cdot \left(\int_\varepsilon^t \left\|F(\nabla u(s) ) \right\|_{L^{\frac{1}{p-2}}}^2 ds \right)^{\frac{1}{2}} \\
&& \quad \quad \quad \leq C\left(\int_\varepsilon^t \left\|\lap \phi\right\|_{L^2}^2 +\|\phi\|_{L^2}^2ds \right)^{\frac{1}{2}} \cdot  \left\| F(\nabla u) \right\|_{L^2 \left( \left[ \varepsilon, T \right]; L^{\frac{1}{p-2}}(\T^2) \right)} \\
&& \quad \quad \quad \leq C\|\phi\|_{L^2 \left(\left[\varepsilon, t \right]; H^2 \right)} \left\| F(\nabla u) \right\|_{L^2 \left( \left[ \varepsilon, T \right]; L^{\frac{1}{p-2}}(\T^2) \right)}<\infty, 
\end{eqnarray*} 
where we use \eqref{20200702eq01} (see, \emph{Claim 1}) and Gagliardo-Nirenberg’s inequality in the second last inequality. 

\medskip

Since $u\in L^2([\epsilon,T);H^2)$ with $\partial_t u \in L^2 \left([\varepsilon, T); H^{-2} \right)$, there exists a sequence $\{u_{m}\}\subset C^1([\epsilon,T);H^2)$ such that $u_{m}\rightarrow u$ strongly in $C((\epsilon,T);L^2)\cap L^2((\epsilon,T);H^2)$ and $\partial_t u_{m}\rightarrow\partial_t u$ weakly in $L^2((\epsilon,T);H^{-2})$ as $m \rightarrow\infty$. This in particular gives
\begin{eqnarray*}
    && \Big\vert\int_{\epsilon}^t\int_{\T^2}\,(\grad u_m-\grad u)F(u(s))\,dx ds\Big\vert \\
    && \quad \quad \quad \quad \leq\norm{\grad u_{m}-\grad u}_{L^2\left((\epsilon,T),L^{\frac{1}{3-p}}\right)}\norm{F(\grad u)}_{L^2\left([\epsilon,T),L^{\frac{1}{p-2}}\right)}\\
    && \quad \quad \quad \quad  \leqslant C\norm{u_{m}-u}_{L^2\left([\epsilon, T);H^2\right)}\norm{F(\grad u)}_{L^2\left([\epsilon,T); L^{\frac{1}{p-2}}\right)},
\end{eqnarray*}
which converges to $0$ as $m \to \infty$. Therefore, by plugging $u_m$ as test functions in~\eqref{e: prelimit identity}, and letting $m\rightarrow\infty$, we derive that
\begin{align} \label{20210702eq02} 
    \norm{u(t)}_{L^2}^2-\norm{u(\epsilon)}_{L^2}^2=\int_{\epsilon}^t\int_{\T^2}\,u(s)\partial_t u(s)dxds\,-\,\int_{\epsilon}^{t}\,\norm{\lap u(s)}_{L^2}^2\,ds\,+\,\int_{\epsilon}^t\norm{\grad u}_{L^p}^p\,ds.
\end{align}
Since $u\in L^2([\epsilon,T);H^2)$ with $\partial_t u\in L^2([\epsilon,T);H^{-2})$, we have
$t\rightarrow\norm{u(t)}_{L^2}^2$ is absolutely continuous and an application of the Fundamental Theorem of Calculus gives: 
\begin{equation*}
    \int_{\epsilon}^t\int_{\T^2}\,u(s)\partial_t u(s)\,dx ds =\frac{1}{2}\int_{\epsilon}^t\int_{\T^2}\,\partial_t (u(s))^2\,dx ds=\frac{1}{2}\left(\norm{u(t)}_{L^2}^2-\norm{u(\epsilon)}_{L^2}^2\right), 
\end{equation*}
which together with \eqref{20210702eq02} implies 
\begin{equation}
\label{e:energy identity with e}
    \norm{u(t)}_{L^2}^2\,-\,\norm{u(\epsilon)}_{L^2}^2\,+\,2\int_{\epsilon}^t\,\norm{\lap u(s)}_{L^2}^2\,ds = 2\int_{\epsilon}^t\,\norm{\grad u}_{L^p}^p\,ds.
\end{equation}

Next, we show that we can take $\epsilon\rightarrow 0$ in the above identity and obtain an energy inequality valid on $[0,T)$. Indeed, the Gagliardo-Nirenberg inequality yields 
\begin{equation*}
    \int_{\epsilon}^t\,\norm{\grad u}_{L^p}^p\,ds
    \leq \int_{\epsilon}^t\,\norm{\lap u(s)}_{L^2}^2\,ds\,+\, C\int_{\epsilon}^t\ \left(\norm{u(s)}_{L^2}^{\frac{2}{3-p}}+\|u(s)\|_{L^2}^p \right)ds. 
\end{equation*}
Plugging the above estimate into~\eqref{e:energy identity with e}, we see that
\begin{eqnarray*}
    \int_{\epsilon}^t\,\norm{\lap u(s)}_{L^2}^2\,ds%
    &\leq& \norm{u(\epsilon)}_{L^2}^2\,-\,\norm{u(t)}_{L^2}^2\,+\,C\int_{\epsilon}^t\left(\norm{u(s)}_{L^2}^{\frac{2}{3-p}}+\|u(s)\|_{L^2}^p \right)ds \\
    &\leq& C \left(\|u\|_{\Tilde{\mathcal S}_T}+T\|u\|_{\Tilde{\mathcal S}_T}^{\frac{2}{3-p}}+T\|u\|_{\Tilde{\mathcal S}_T}^p \right), 
\end{eqnarray*}
where the right hand side in the above estimate above is uniform in $\varepsilon$. Note that this also implies $u\in L^2([0,T);H^2)$. 

Therefore, by the Dominated convergence theorem, we are able to let $\epsilon \to 0$ in~\eqref{e:energy identity with e} to recover~\eqref{e:energy identity}. Here, in particular we have used the continuity of the function $\|u(t)\|_{L^2}$ at $t=0$ (one can easily verify this by the definition of mild solutions). Similarly, since $u\in L^2([0,T);H^2)$, the right hand side of~\eqref{e: prelimit identity} is uniformly bounded with respect to $\epsilon$ with $\phi\in C_c^\infty((0,T);H^2)$. Thus, $(\partial_t u)\chi_{[\epsilon,T)}$ converges weakly to $\partial_t u$ in $L^2((0,T);H^{-2})$. Finally, by taking $\epsilon\rightarrow 0$ in~\eqref{e: prelimit identity} with a test function $\phi\in C_c^{\infty}([0,T]\times\T^2)$ (by Dominated convergence theorem again), we get the weak formulation in~\eqref{e:weak soln}. Hence the mild solution $u$ is also a weak solution on $[0,T]$. 
\end{proof}

%----------------------------------------

\section{Epitaxial thin film growth equation with advection} \label{fengyuxiaogege}
In this section, we study the thin film equation with advection on $\T^2$. Accordingly, we add an advection term to the equation~\eqref{e:4 order eqn}:
\begin{equation}
\label{e:adv 4 order eqn}
    \partial_t u + v\cdot\grad u = -\lap^2 u -\grad\cdot(\abs{\grad u}^{p-2}\grad u),
    \qquad u(0,x)= u_0(x)
\end{equation}
where $v$ is a given divergence free vector field. The plan of this section is as follow. To begin with, we establish the local existence of solution to equation~\eqref{e:adv 4 order eqn} in Section~\ref{sec:local exists for adv}. Next, in Section~\ref{sec:global existence} we show that if the velocity field $v$ has small \emph{dissipation time} (see Definition~\ref{def:dissipationTime}), then the corresponding solution to~\eqref{e:adv 4 order eqn} can be extended to be a global one. Further, the $L^2$ norm of the global solution will decay exponentially fast on the whole time interval.
\subsection{Local existence with $L^2$ initial data}
\label{sec:local exists for adv}
As in the non-advective case, equation~\eqref{e:adv 4 order eqn} admits an unique local mild solution in $\Tilde{\FS}_T$ if $v\in L^{\infty}([0,\infty);L^2(\T^2))$ is provided. Moreover, one can also check that the mild solution is equivalent to a weak solution and satisfies certain energy identity. The proofs are very similar to the non-advective case. Therefore, we only sketch the proof to these results and leave the details to the interested reader.
\begin{definition}
\label{def:mild and weak soln with advection}
\begin{enumerate}
\item [(1).] For $p>2$ and $v\in L^{\infty}([0,\infty);L^2(\T^2))$, a function $ u\in C([0,T];L^{2}(\mathbb{T}^2))$, $T>0$, such that $\grad u$ is locally integrable, is called a \emph{mild solution} of~\eqref{e:adv 4 order eqn} on $[0,T]$ with initial data $ u_0\in L^{2}(\T^2)$, if for any $0\leq t\leq T$,
\begin{equation}
\label{e:adv mild soln}
     u(t)=\mathcal{T}_{v}(u)(t)\defeq e^{-t\FL} u_0\,-\,\int_0^t\,\grad e^{-(t-s)\FL}(\abs{\grad u}^{p-2}\grad u)\,ds\,-\, \int_0^t\,e^{-(t-s)\FL}(v\cdot\grad u)\,ds,
\end{equation}
holds pointwisely in time with values in $L^2$, where the integral is defined in the B\"{o}chner sense.

\medskip

\item [(2). ] For $p>2$ and $v\in L^{\infty}([0,\infty);L^2(\T^2))$, a function $ u\in L^{\infty}([0,T];L^{2}(\T^{2}))\cap L^{2}([0,T];H^{2}(\T^{2}))$ is called a \emph{weak solution} of~\eqref{e:adv 4 order eqn} on $[0,T)$ with initial value $ u(0)= u_0\in L^{2}(\T^{2})$ if for all $\phi\in C_c^{\infty}([0,T)\times\T^{2})$,
\begin{align}
&\int_{T^2}\,u_0\phi(0)\,dx\, +\, \int_0^T\,\int_{\T^2}u\,\partial_t \phi\,dx dt \\\notag
&=\int_0^T\,\int_{\T^2}\,\lap u\lap\phi dx dt\, -\, \int_0^T \int_{\T^2}\,\abs{\grad u}^{p-2}\grad u\cdot\grad\phi\,dx dt \,+\,\int_0^T \int_{\T^2}\,\phi v\cdot\grad u\,dx dt ,
\end{align}
and $\partial_t u\in L^{2}([0,T]; H^{-2}(\T^2))$. 
\end{enumerate}
\end{definition}

\begin{theorem}
\label{thm:local mild soln with advection}
Let $u_0\in L^{2}(\T^2)$, $2<p< 3$, and $v\in L^{\infty}([0,\infty);L^2(\T^2))$. Then there exists $0<T\leqslant 1$, which only depends on $\norm{u_0}_{L^2}$ and $\sup\limits_{t\in[0,\infty)}\norm{v}_{L^2}$ such that 
equation~\eqref{e:adv 4 order eqn} admits a mild solution $u$ on $[0,T]$, which is unique in $\Tilde{\FS}_T$ .
\end{theorem}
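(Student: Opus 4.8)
The plan is to mimic the proof of Theorem~\ref{thm:local mild soln} almost verbatim, showing that the modified solution map $\mathcal{T}_v$ defined in~\eqref{e:adv mild soln} is a contraction on a closed ball $\mathbb{B}_{R}(0)\subset\Tilde{\FS}_T$ and invoking the Banach contraction mapping theorem. Since $\mathcal{T}_v$ differs from $\mathcal{T}$ only by the linear advection term
\begin{equation*}
    J_v(u)(t)\defeq\int_0^t e^{-(t-s)\FL}\big(v\cdot\grad u\big)\,ds,
\end{equation*}
every estimate in Lemma~\ref{lem:bdd of operator} and Lemma~\ref{lem:L-cts of operator} carries over unchanged except for the two new contributions coming from $J_v$ (its $L^2$ part and its $\grad$ part). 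Thus I would re-derive the analogues of~\eqref{e:bdd of operator} and~\eqref{e:L-cts of operator} for $\mathcal{T}_v$, and the whole task reduces to bounding $J_v(u)$ in the two pieces of the $\Tilde{\FS}_T$-norm; continuity in time, i.e. $J_v(u)\in C([0,T];L^2(\T^2))$, then follows exactly as in \emph{Claim 1} of Lemma~\ref{lem:bdd of operator}.

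The only subtlety is that $v$ is merely square integrable in space, so the product $v\cdot\grad u$ cannot be placed in the space $L^{2/(p-1)}$ required by Lemma~\ref{lem: semi group on divergence}; by Cauchy--Schwarz it lives only in $L^1$, with $\|v\cdot\grad u\|_{L^1}\le\|v\|_{L^2}\|\grad u\|_{L^2}$. I would therefore use the endpoint smoothing estimate $\|e^{-\tau\FL}f\|_{L^2}\le C\tau^{-1/4}\|f\|_{L^1}$, which follows from exactly the Plancherel/Hausdorff--Young computation of Lemma~\ref{lem: semi group on divergence} (the symbol $e^{-\tau|\k|^4}$ has $\ell^2(\Z^2)$-norm bounded by $C\tau^{-1/4}$). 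For the $L^2$ piece this gives, after inserting the weight $s^{-1/4}(s^{1/4}\|\grad u\|_{L^2})$ and substituting $s=t\tau$,
\begin{equation*}
    \|J_v(u)(t)\|_{L^2}\le C\int_0^t (t-s)^{-\frac14}\|v\|_{L^2}\|\grad u(s)\|_{L^2}\,ds
    \le C\Big(\sup_{t}\|v\|_{L^2}\Big)\|u\|_{\Tilde{\FS}_T}\,t^{\frac12},
\end{equation*}
the Beta integral $\int_0^1(1-\tau)^{-1/4}\tau^{-1/4}\,d\tau$ being finite. For the gradient piece I would split $\grad e^{-\tau\FL}=(-\lap)^{1/2}e^{-\tau\FL/2}\circ e^{-\tau\FL/2}$ and apply Lemma~\ref{lem: Hs of semigroup} with $s=1$ to produce an extra factor $\tau^{-1/4}$; the singularity $(t-s)^{-1/2}$ against the weight $s^{-1/4}$ is still integrable, and after multiplying by $t^{1/4}$ one again obtains a bound of the form $C(\sup_t\|v\|_{L^2})\|u\|_{\Tilde{\FS}_T}\,t^{1/2}$.

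Collecting these, the boundedness estimate becomes
\begin{equation*}
    \|\mathcal{T}_v(u)\|_{\Tilde{\FS}_T}\le C\Big(\|u_0\|_{L^2}+T^{\frac{3-p}{2}}\|u\|_{\Tilde{\FS}_T}^{p-1}+T^{\frac12}\big(\sup_t\|v\|_{L^2}\big)\|u\|_{\Tilde{\FS}_T}\Big),
\end{equation*}
and, because $J_v$ is \emph{linear} in $u$, the difference $J_v(u_1)-J_v(u_2)=J_v(u_1-u_2)$ contributes only an extra term $CT^{1/2}(\sup_t\|v\|_{L^2})\|u_1-u_2\|_{\Tilde{\FS}_T}$ to the Lipschitz estimate~\eqref{e:L-cts of operator}. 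I would then fix $R\ge 2C_0\|u_0\|_{L^2}$ as in the proof of Theorem~\ref{thm:local mild soln} and choose $T\le 1$ so small that both $T^{(3-p)/2}R^{p-2}$ and $T^{1/2}\sup_t\|v\|_{L^2}$ fall below a fixed absolute constant; this makes $\mathcal{T}_v$ map $\mathbb{B}_{R}(0)$ into itself and act as a contraction, so Banach's theorem yields a unique fixed point in $\Tilde{\FS}_T$, which is the desired mild solution with $T$ depending only on $\|u_0\|_{L^2}$ and $\sup_t\|v\|_{L^2}$.

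The main obstacle is precisely the low spatial regularity of $v$: since only $v\in L^2$ is available, the advective nonlinearity sits right at the integrability threshold, and one must verify that the endpoint $L^1\to L^2$ smoothing, combined with the weighted control of $\grad u$ built into $\Tilde{\FS}_T$, still yields time exponents $(t-s)^{-1/4}$ and $(t-s)^{-1/2}$ that are integrable and, crucially, leave a \emph{positive} power $T^{1/2}$ of the time horizon. This last point — that the advection term is a subcritical perturbation vanishing as $T\to 0$ — is exactly what allows it to be absorbed into the contraction argument for small $T$; the admissible range $2<p<3$ is what guarantees convergence of the Beta integrals coming from the nonlinear term, while those arising from the advection term converge automatically.
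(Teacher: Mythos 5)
Your proposal is correct and follows essentially the same route as the paper: the authors likewise bound the advection term via the $L^1\to L^2$ smoothing estimate of Lemma~\ref{lem: semi group on divergence} with $p=3$ (using $\norm{v\cdot\grad u}_{L^1}\le\norm{v}_{L^2}\norm{\grad u}_{L^2}$ and the weight $s^{1/4}\norm{\grad u}_{L^2}$ built into $\Tilde{\FS}_T$), obtain the same $T^{1/2}\sup_t\norm{v}_{L^2}\norm{u}_{\Tilde{\FS}_T}$ contributions to both the boundedness and Lipschitz estimates by linearity of the advection term, and then close the contraction on a ball of radius $R\ge 2C_0\norm{u_0}_{L^2}$ for $T$ small enough. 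No gaps.
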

\begin{proof}
Observe that if $v\in L^{\infty}([0,\infty);L^2(\T^2))$, then for $u\in\Tilde{\FS}_T$, we have the following estimates for the advection term:
\begin{align*}
    \Big\Vert\int_0^t\,e^{-\left(t-s\right)\FL}(v\cdot\grad u)\,ds\Big\Vert_{L^2}
    &\leqslant\int_{0}^{t}(t-s)^{-\tfrac{1}{4}}\norm{v\cdot\grad u}_{L^1}ds\\
    &\leqslant\norm{v}_{L^\infty([0,\infty);L^2)}\int_{0}^{t}\left(t-s\right)^{-\tfrac{1}{4}}\cdot s^{-\tfrac{1}{4}}\left(s^{\tfrac{1}{4}}\norm{\grad u\left(s\right)}_{L^2}\right)\,ds\\
    &\leqslant t^{\tfrac{1}{2}} \cdot \norm{v}_{L^\infty\left([0,\infty);L^2\right)}\norm{ u}_{\Tilde{\FS}_T},
\end{align*}
where we used Lemma~\ref{lem: semi group on divergence} with $p=3$ in the first inequality above. Similarly,
\begin{equation*}
    t^{\tfrac{1}{4}} \cdot \Big\Vert\int_0^t\,\grad e^{-\left(t-s\right)\FL}\left(v\cdot\grad u\right)\,ds\Big\Vert_{L^2}\leqslant t^{\tfrac{1}{2}} \cdot \norm{v}_{L^\infty\left([0,\infty);L^2\right)} \cdot \norm{ u}_{\Tilde{\FS}_T}.
\end{equation*}
Moreover, for $u_1,u_2\in\Tilde{\FS}_T$ we have
\begin{align*}
   \Big\Vert\int_0^t\,e^{-(t-s)\FL}\left(v\cdot\grad \left(u_1-u_2\right)\right)\,ds\Big\Vert_{L^2}\leqslant t^{\tfrac{1}{2}}\cdot \norm{v}_{L^\infty([0,\infty);L^2)} \cdot \norm{u_1-u_2}_{\Tilde{\FS}_T},\\
   t^{\tfrac{1}{4}} \cdot\Big\Vert\int_0^t\,\grad e^{-(t-s)\FL}\left(v\cdot\grad \left(u_1-u_2\right)\right)\,ds\Big\Vert_{L^2}\leqslant t^{\tfrac{1}{2}} \cdot \norm{v}_{L^\infty([0,\infty);L^2)} \cdot \norm{u_1-u_2}_{\Tilde{\FS}_T}.
\end{align*}
By using the above estimates to modify the proof of Theorem~\ref{thm:local mild soln}, one can check that for $0< T\leq 1$, and $u,u_1,u_2\in\Tilde{\FS}_T$ there exists constant $C_0$ such that 
\begin{equation*}
    \norm{\mathcal{T}_{v}( u)}_{\Tilde{\FS}_T}\leq C_0\Big(\norm{ u_0}_{L^2}+
    T^{\tfrac{3-p}{2}}\norm{ u}_{\Tilde{\FS}_T}^{p-1}+T^{\tfrac{1}{2}}\norm{v}_{L^{\infty}([0,T];L^2)}\norm{ u}_{\Tilde{\FS}_T}\Big),
\end{equation*}
and 
\begin{align*}
    &\norm{\mathcal{T}_{v}(u_1)-\mathcal{T}_{v}(u_2)}_{\Tilde{\FS}_T}\\
    &\leq C_0 T^{\tfrac{3-p}{2}}\left(\norm{ u_1}_{\Tilde{\FS}_T}^{p-2}+\norm{ u_2}_{\Tilde{\FS}_T}^{p-2}+T^{\tfrac{p-2}{2}}\norm{v}_{L^{\infty}([0,\infty);L^2)}\right)\norm{ u_1- u_2}_{\Tilde{\FS}_T}.
\end{align*}
Therefore, for $R\geq 2C_0\norm{u_0}_{L^2}$, if
\begin{equation}
\label{e:expression of T}
    T\leqslant \min\Big\{1, (4 C_0 R^{p-2}+1)^{\frac{-2}{3-p}},\big(16C_0^2\norm{v}_{L^{\infty}([0,\infty);L^2)}^2+1\big)^{-1}\Big\},
\end{equation}
then
\begin{subequations}
\begin{align*}
    \norm{\mathcal{T}_{v}(u)}_{\Tilde{\FS}_T}&\leq R,\qquad\forall u\in\mathcal{B}_{R}(0);\\
    \norm{\mathcal{T}_{v}(u_1)-\mathcal{T}(u_2)}_{\Tilde{\FS}_T}
    &\leq q \norm{u_1-u_2}_{\Tilde{\FS}_T},\qquad\text{with}\quad q<1.
\end{align*}
\end{subequations}
By the Banach contraction mapping theorem, there is a unique fixed point of $\mathcal{T}_{v}$ in $\mathcal{B}_{R}(0)$, which is the unique mild solution to~\eqref{e:adv 4 order eqn} with initial data $u_0$.
\end{proof}
\begin{corollary}
\label{cor:L2 criterion}
With the same assumptions in Theorem~\ref{thm:local mild soln with advection}, if $T^*$ is the maximal time of existence of the mild solution to~\eqref{e:adv 4 order eqn}, then 
\begin{equation*}
    \limsup_{t\rightarrow T_-^*}\norm{u(t)}_{L^2(\T^2)}=\infty.
\end{equation*}
Otherwise, $T^*=\infty$.
\end{corollary}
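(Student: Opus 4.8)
The plan is to mirror the proof of Corollary~\ref{cor:L2 argument} almost verbatim, proceeding by contradiction and exploiting the fact that the local existence time furnished by Theorem~\ref{thm:local mild soln with advection} depends on the data only through $\norm{u_0}_{L^2}$ and the \emph{fixed} quantity $\sup_{t\in[0,\infty)}\norm{v}_{L^2}$. Concretely, I would suppose $T^*<\infty$ and $\limsup_{t\to T_-^*}\norm{u(t)}_{L^2}=\alpha<\infty$. By the continuity of $t\mapsto\norm{u(t)}_{L^2}$ (which holds since $u\in C([0,T^*);L^2)$), there is a time $t^*<T^*$ with $\norm{u(t)}_{L^2}\le 3\alpha$ for all $t\in(t^*,T^*)$.

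The key observation, and the one point that requires a moment of care, is the role of the velocity field when restarting the evolution at a later time. If $u$ solves~\eqref{e:adv 4 order eqn} on $[0,T^*)$ with field $v$, then for fixed $t_0\in[t^*,T^*)$ the shifted function $w(\tau)\defeq u(t_0+\tau)$ solves the same equation on $[0,T^*-t_0)$ with the shifted field $v(t_0+\cdot)$ and initial datum $u(t_0)$. Crucially, $\sup_{\tau\ge 0}\norm{v(t_0+\tau)}_{L^2}\le \sup_{t\ge 0}\norm{v}_{L^2}$, so the shifted field obeys the \emph{same} bound as $v$. Consequently, setting
\begin{equation*}
    T\le\min\Big\{1,\,(4C_0(3\alpha)^{p-2}+1)^{-\frac{2}{3-p}},\,\big(16C_0^2\sup_{t\ge0}\norm{v}_{L^2}^2+1\big)^{-1}\Big\}
\end{equation*}
as in~\eqref{e:expression of T} with $R=2C_0\cdot 3\alpha$, Theorem~\ref{thm:local mild soln with advection} produces a mild solution on $[t_0,t_0+T]$ with initial data $u(t_0)$, and this existence window $T$ is \emph{uniform} in $t_0$ because it never sees how close $t_0$ sits to $T^*$.

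I would then pick $t_0\in[t^*,T^*)$ so close to $T^*$ that $t_0+T\ge T^*$; this is possible since $T>0$ is fixed independently of $t_0$. The resulting mild solution $\bar u$ on $[t_0,t_0+T]$ agrees with $u$ on $[t_0,T^*)$ by the uniqueness part of Theorem~\ref{thm:local mild soln with advection} in $\Tilde{\FS}_T$, hence $\bar u$ extends $u$ past $T^*$, contradicting the maximality of $T^*$. Therefore $\limsup_{t\to T_-^*}\norm{u(t)}_{L^2}=\infty$ whenever $T^*<\infty$, and otherwise $T^*=\infty$, as claimed.

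The only genuine obstacle is the verification that the restart time $T$ does not degenerate as $t_0\uparrow T^*$; this is exactly where the time-independence of the bound $\sup_{t\ge0}\norm{v}_{L^2}$ is essential, since otherwise a shrinking existence window could fail to reach $T^*$. Since $v\in L^\infty([0,\infty);L^2)$ gives a uniform-in-time bound, this difficulty does not arise, and the argument closes just as in the non-advective case.
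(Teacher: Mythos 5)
Your proof is correct and follows essentially the same route the paper intends: the paper omits this proof, stating only that it parallels Corollary~\ref{cor:L2 argument}, and your argument is exactly that adaptation, with the one genuinely new point (the uniformity in $t_0$ of the restart time, coming from the time-independent bound on $\sup_{t\ge 0}\norm{v}_{L^2}$) correctly identified and handled.
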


\begin{proposition}
\label{prop:adv mild equi with weak}
Let $u$ be the mild solution on $[0,T]$ given in Theorem~\ref{thm:local mild soln with advection}. Then $u$ is also a weak solution of~\eqref{e:adv 4 order eqn} on $[0,T]$, and satisfies the energy identity for any $t\in[0,T)$:
\begin{equation}
\label{e:energy identity for adv}
    \norm{u(t)}_{L^2}^2 + 2\int_0^t\,\norm{\lap u(s)}_{L^2}^2\,ds
    =\norm{u_0}_{L^2}^2 + 2\int_0^t\,\norm{\grad u(s)}_{L^p}^p\, ds.
\end{equation}
\end{proposition}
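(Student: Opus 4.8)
The plan is to follow the scheme of Proposition~\ref{prop:mild equi with weak} essentially verbatim, tracking only the new contribution of the advection integral $\int_0^t e^{-(t-s)\FL}(v\cdot\grad u)\,ds$ in~\eqref{e:adv mild soln} and exploiting incompressibility at the decisive step. First, for $0<\epsilon\le t\le T$, I would rewrite the mild formula starting from time $\epsilon$,
\[
u(t)=e^{-(t-\epsilon)\FL}u(\epsilon)-\int_\epsilon^t \grad e^{-(t-s)\FL}F(\grad u(s))\,ds-\int_\epsilon^t e^{-(t-s)\FL}(v\cdot\grad u)(s)\,ds,
\]
the equality holding in $C([\epsilon,T];L^2(\T^2))$ by the semigroup property exactly as in~\eqref{e:mild with epsilon}.

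Next I would upgrade to $u\in L^2([\epsilon,T];H^2(\T^2))$ by applying $\lap$ and using Lemma~\ref{lem: semi group on divergence} and Lemma~\ref{lem: Hs of semigroup}. The first two terms are estimated exactly as in Proposition~\ref{prop:mild equi with weak}. For the advection term I would split $\lap e^{-(t-s)\FL}=\lap e^{-(t-s)\FL/2}\,e^{-(t-s)\FL/2}$, bound the first factor by $C(t-s)^{-1/2}$ via Lemma~\ref{lem: Hs of semigroup} with $s=2$, and apply the $p=3$ endpoint of Lemma~\ref{lem: semi group on divergence} (which gives $\norm{e^{-\tau\FL}g}_{L^2}\le C\tau^{-1/4}\norm{g}_{L^1}$) to the second, so that
\[
\norm{\lap e^{-(t-s)\FL}(v\cdot\grad u)}_{L^2}\le C(t-s)^{-\frac34}\norm{v\cdot\grad u}_{L^1}\le C(t-s)^{-\frac34}s^{-\frac14}\norm{v}_{L^\infty([0,\infty);L^2)}\norm{u}_{\Tilde{\FS}_T},
\]
where I used $\norm{v\cdot\grad u}_{L^1}\le\norm{v}_{L^2}\norm{\grad u}_{L^2}$ and $u\in\Tilde{\FS}_T$. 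The kernel $(t-s)^{-3/4}s^{-1/4}$ is integrable, so this contribution is bounded and square-integrable in time, giving $\lap u\in L^2([\epsilon,T]\times\T^2)$. With this regularity, the two claims in the proof of Proposition~\ref{prop:mild equi with weak}, namely $F(\grad u)\in L^2([\epsilon,T];L^{\frac{1}{p-2}})$ and $N(u)\in L^1([\epsilon,T]\times\T^2)$, carry over unchanged, and in addition $v\cdot\grad u\in L^2([\epsilon,T];L^1)$.

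I would then pair~\eqref{e:adv mild soln} with a test function $\phi$ and differentiate in $t$ using the self-adjointness of $e^{-t\FL}$, the commutation of $\grad$ with $e^{-t\FL}$, and analyticity of the semigroup, just as in~\eqref{e:derivative of pairing}--\eqref{e: prelimit identity}. The only new feature is that the extra advection integral produces, after differentiation, the boundary term $\int_{\T^2}\phi(t)\,(v\cdot\grad u(t))\,dx$, which is precisely the advective term in the weak formulation of Definition~\ref{def:mild and weak soln with advection}; thus the pre-limit identity is~\eqref{e: prelimit identity} with the extra summand $\int_\epsilon^t\int_{\T^2}\phi\,v\cdot\grad u\,dx\,ds$. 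To verify $\partial_t u\in L^2([\epsilon,T);H^{-2})$ I would check this term is well-defined for $\phi\in L^2([\epsilon,t];H^2)$: since $H^2(\T^2)\hookrightarrow L^\infty(\T^2)$ in two dimensions, the term is bounded by $\norm{\phi}_{L^2H^2}\norm{v}_{L^\infty L^2}\norm{\grad u}_{L^2L^2}<\infty$ after Cauchy--Schwarz in time. The density argument (approximating $u$ by $u_m\in C^1([\epsilon,T);H^2)$) then goes through as before, the advection pairing converging because $\grad u_m\to\grad u$ in $L^2L^2$ while $v\phi\in L^2L^2$.

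Finally, the decisive point is the energy identity. Plugging $u_m$ into the pre-limit identity and letting $m\to\infty$ produces, in addition to the terms of~\eqref{20210702eq02}, the advective contribution $\int_\epsilon^t\int_{\T^2}u\,(v\cdot\grad u)\,dx\,ds$. Here incompressibility is essential: since $\grad\cdot v=0$,
\[
\int_{\T^2}u\,(v\cdot\grad u)\,dx=\tfrac12\int_{\T^2}v\cdot\grad(u^2)\,dx=-\tfrac12\int_{\T^2}(\grad\cdot v)\,u^2\,dx=0,
\]
so the advection term drops out entirely and~\eqref{e:energy identity with e} is recovered \emph{verbatim} on $[\epsilon,T)$. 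Once this holds, taking $\epsilon\to0$ by the same uniform-in-$\epsilon$ bounds and dominated convergence as in Proposition~\ref{prop:mild equi with weak}, together with continuity of $\norm{u(t)}_{L^2}$ at $t=0$, yields both the weak formulation and the energy identity~\eqref{e:energy identity for adv}. I expect no genuine obstacle beyond bookkeeping: the only new inputs are the correct choice of semigroup exponents for the advection term in the $H^2$ estimate (forcing the $p=3$ endpoint of Lemma~\ref{lem: semi group on divergence} and the embedding $H^2\hookrightarrow L^\infty$, since only $v\in L^\infty L^2$ is available) and the cancellation $\int u\,(v\cdot\grad u)=0$, which is exactly what makes~\eqref{e:energy identity for adv} identical in form to the non-advective identity~\eqref{e:energy identity}.
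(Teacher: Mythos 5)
Your proposal is correct and is exactly the adaptation the paper has in mind: the paper omits the proof of Proposition~\ref{prop:adv mild equi with weak}, stating only that it is similar to the non-advective case, and you have supplied precisely that adaptation, with the two genuinely new inputs identified correctly --- the $(t-s)^{-3/4}\,\norm{v\cdot\grad u}_{L^1}$ semigroup bound (via the $p=3$ endpoint of Lemma~\ref{lem: semi group on divergence}) for the advection integral in the $H^2$ estimate, and the cancellation $\int_{\T^2} u\,(v\cdot\grad u)\,dx=0$ from incompressibility, which is why \eqref{e:energy identity for adv} has the same form as \eqref{e:energy identity}. No gaps beyond routine bookkeeping.
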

The proof of Corollary~\ref{cor:L2 criterion} and Proposition~\ref{prop:adv mild equi with weak} are similar to the proof of non-advective case, and hence we omit it. 
\begin{remark}
\label{rmk:redefine N}
\begin{enumerate}
    \item [(1).] By \eqref{e:expression of T}, we can conclude that the length of $T$ is non-decreasing as $\norm{u_0}_{L^2}$ decreases.
    
    \medskip
    
    \item [(2).]In the proof of Proposition~\ref{prop:mild equi with weak}, we showed that when $2<p<3$, $N(u)$ actually belongs to $L^1([0,T]\times \T^2)$, and this conclusion remains hold for the advective case. Therefore, in this regime we are allowed to rewrite the mild solution~\eqref{e:adv mild soln} into the following way:
    \begin{eqnarray*}
     u(t) %
     &=& e^{-t\FL} u_0\,-\,\int_0^t\,e^{-(t-s)\FL}\grad\cdot\left(\abs{\grad u}^{p-2}\grad u\right)\,ds \\
     && \quad \quad \quad \quad \quad -\, \int_0^t\,e^{-(t-s)\FL}\left(v\cdot\grad u\right)\,ds.
\end{eqnarray*}
Note that this alternative form plays a crucial role in the sequel (see, e.g.,  Lemma \ref{lem: mean small H2}).
\end{enumerate} 
\end{remark}

\subsection{Global existence with advecting flows with small dissipation time}
\label{sec:global existence}
In the second part of this section, our goal is to show the existence of global mild solution in $\Tilde{\FS}_T$ space to equation~\eqref{e:adv 4 order eqn} when $v\in L^{\infty}\left([0,\infty), W^{1,\infty}(\T^d)\right)$ and the \emph{dissipation time} of it satisfies certain constraint (see condition \eqref{e:flow condition}). Let $\bar{u}$ denotes the mean of the solution, then observe that equation~\eqref{e:adv 4 order eqn} is mean conserved, and $u-\bar{u}$ satisfies  equation~\eqref{e:adv 4 order eqn} as well.  Therefore, without loss of generality, we may assume $u_0\in L_0^2$, where $L_0^2$ is the space of all mean-zero, square integrable functions on the torus. 

Our main result of this section is in the same spirit and can be regarded as certain counterparts to those in~\cite{iyer2019convection}, in which the authors proved exponential decay of solutions to a large class of nonlinear second order parabolic equations. However, the assumptions required to apply the result in~\cite{iyer2019convection} do not apply to the nonlinear term $N(u)$ in~\eqref{e:adv 4 order eqn}, more precisely, $N(u)$ belongs to $L^1(\T^d)$ instead of $L^2(\T^d)$, and it cannot be bounded in terms of $\norm{u}_{L^2}$ and $\norm{\lap u}_{L^2}^2$. To overcome this difficulty, we estimate the difference between the solution of \eqref{e:adv 4 order eqn} and \eqref{e:4 order eqn} with $L^1$ initial data for short time intervals, and show that both the difference and the solution of \eqref{e:4 order eqn} can be controlled by $\norm{u}_{L^2}$ and $\norm{\lap u}_{L^2}^2$.
Finally, combining all these ingredients, we show that the $L^2$ norm of the solution to~\eqref{e:adv 4 order eqn} will decay exponentially fast, if the \emph{dissipation time} of $v$ satisfies specific constraint.

Now we turn to the detail. Let $\mathcal S_{s, s+t}$ be the solution operator for advective hyper diffusion equation, which is the linear part of~\eqref{e:adv 4 order eqn}:
\begin{equation}
\label{e:adv hyper diff}
    \partial_t\phi + v\cdot\grad\phi + \Delta^2\phi = 0.
\end{equation} 
That is, the function $\phi(s+t)=S_{s, s+t}\phi(s)$ solves~\eqref{e:adv hyper diff} with initial data $\phi(s)$ and periodic boundary condition.

Now we are ready to introduce the term \emph{dissipation time}. Heuristically, it is the smallest time that the system halves the energy ($L^2$ norm) of the solution to~\eqref{e:adv hyper diff}.

\begin{definition}
\label{def:dissipationTime}
The \emph{dissipation time} associated to the solution operator $\mathcal S_{s, s+t}$ is defined by
  \begin{equation}
  \label{e:dtimeDef}
    \tau^*(v) \defeq  \inf \set[\Big]{ t \geq 0 \st  \norm{ \mathcal S_{s, s+t} }_{L_0^2 \to L_0^2 } \leq \tfrac{1}{2} \text{ for all $s\geq 0$} }\,.
  \end{equation}
\end{definition}

We show that the mild solution to~\eqref{e:adv 4 order eqn} in space $\Tilde{\FS}_T$ can be extended over any finite time interval $[0,T]$ if we choose $v$ carefully enough with respect to the initial data. Indeed, Theorem~\ref{thm:local mild soln with advection} indicates that as long as the $L^2$ norm remains finite, the solution can always be extended to a longer time interval. Therefore, it suffices for us to show when the \emph{dissipation time} of $v$ is small enough, then the $L^2$ norm of the solution will not increase over the interval $[0,T]$.

We are ready to state the main theorem of this section.
\begin{theorem}
\label{thm:global existence}
For $2<p<3$, $v\in L^{\infty}\left([0,\infty), W^{1,\infty}(\T^d)\right)$, and $\mu>0$, let $u$ be the mild solution of \eqref{e:adv 4 order eqn} with initial data $u(0)=u_0\in L_0^2$. Then there exists a threshold value 
\begin{equation*}
T_1=T_1(\norm{u_0}_{L^2}, \mu, p)  
\end{equation*}
such that if 
\begin{equation}\label{e:flow condition}
\left(\norm{v}_{L^\infty([0,\infty);L^{\infty})}\left(\tau^*(v)\right)^{\frac{5}{4}}+\left(\tau^*(v)\right)^{\frac{3}{4}}\right)\leqslant T_1(\norm{u_0}_{L^2}, \mu, p),
\end{equation}
then there exist a constant $\beta>0$, such that for any $t>0$ it holds
\begin{equation*}
    \norm{u(t)}_{L^2}\leq\beta e^{-\mu t}\norm{u_0}_{L^2}.
\end{equation*}
\end{theorem}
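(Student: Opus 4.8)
The plan is to establish contraction of the $L^2$ norm on each time window of length $\tau\defeq\tau^*(v)$ and then to iterate. Fix $s\ge 0$ and suppose the solution exists on $[s,s+\tau]$ with $\norm{u(s)}_{L^2}$ under control. Using the propagator $\mathcal S_{s,s+t}$ of the linear problem \eqref{e:adv hyper diff} and the reformulation of the mild solution from Remark~\ref{rmk:redefine N} (so that the nonlinearity enters as $N(u)=\grad\cdot(\abs{\grad u}^{p-2}\grad u)\in L^1$), Duhamel's formula reads
\[
  u(s+\tau)=\mathcal S_{s,s+\tau}u(s)-\int_s^{s+\tau}\mathcal S_{r,s+\tau}N(u(r))\,dr.
\]
By Definition~\ref{def:dissipationTime} the linear term obeys $\norm{\mathcal S_{s,s+\tau}u(s)}_{L^2}\le\tfrac12\norm{u(s)}_{L^2}$, so the whole problem reduces to showing that the nonlinear Duhamel integral $I\defeq\int_s^{s+\tau}\mathcal S_{r,s+\tau}N(u(r))\,dr$ is a small multiple of $\norm{u(s)}_{L^2}$.

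The main obstacle, as noted just before the statement, is that $N(u)$ only lies in $L^1$, so the $L_0^2\to L_0^2$ bound built into $\tau^*(v)$ cannot be applied to $I$ directly. To bypass this I would compare the advective propagation of the frozen forcing $N(u)$ with its propagation by the pure biharmonic semigroup $e^{-t\FL}$. Setting $\bar I\defeq\int_s^{s+\tau}e^{-(s+\tau-r)\FL}N(u(r))\,dr$ and using the Duhamel identity $\mathcal S_{r,t}=e^{-(t-r)\FL}-\int_r^t\mathcal S_{\sigma,t}\,(v\cdot\grad\,e^{-(\sigma-r)\FL})\,d\sigma$ together with the contractivity $\norm{\mathcal S_{\sigma,t}}_{L^2\to L^2}\le 1$ (which follows by testing \eqref{e:adv hyper diff} against the solution and using $\grad\cdot v=0$), one splits $\norm{I}_{L^2}\le\norm{\bar I}_{L^2}+\norm{I-\bar I}_{L^2}$. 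For $\bar I$ the smoothing estimates of Lemma~\ref{lem: semi group on divergence} and Lemma~\ref{lem: Hs of semigroup} turn the $L^1$ datum into an $L^2$ bound with an integrable singularity $(s+\tau-r)^{-1/4}$, producing a factor $\tau^{3/4}$ times norms of $u$; the correction $I-\bar I$ carries the extra operator $v\cdot\grad$, which costs one derivative on the semigroup and the factor $\norm{v}_{L^\infty([0,\infty);L^\infty)}$, producing a factor $\norm{v}_{L^\infty}\tau^{5/4}$. This is exactly the combination on the left of \eqref{e:flow condition}.

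To turn these into usable bounds I would control the $u$-norms appearing above by the energy identity \eqref{e:energy identity for adv}. Combined with the Gagliardo--Nirenberg inequality used in Proposition~\ref{prop:adv mild equi with weak} --- which lets one absorb $\norm{\grad u}_{L^p}^p$ into $\tfrac12\norm{\lap u}_{L^2}^2$ up to lower-order powers of $\norm{u}_{L^2}$ --- the identity yields, on any window where $\norm{u}_{L^2}$ stays below a fixed multiple of $\norm{u_0}_{L^2}$, a bound on $\int_s^{s+\tau}\norm{\lap u}_{L^2}^2\,dr$ and on $\sup_{[s,s+\tau]}\norm{u}_{L^2}$ by a constant depending only on $\norm{u_0}_{L^2}$ and $p$ (the short-time growth being controlled by the local theory, Theorem~\ref{thm:local mild soln with advection}). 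Feeding these into the estimates for $\bar I$ and $I-\bar I$ gives
\[
  \norm{u(s+\tau)}_{L^2}\le\Bigl(\tfrac12+C(\norm{u_0}_{L^2},p)\bigl(\norm{v}_{L^\infty}\tau^{5/4}+\tau^{3/4}\bigr)\Bigr)\norm{u(s)}_{L^2}.
\]

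Finally I would close the argument with a bootstrap and an iteration. Choosing the threshold $T_1(\norm{u_0}_{L^2},\mu,p)$ small enough that the error term above is at most $\tfrac14$ and that $\mu\tau\le\tfrac14$ --- both guaranteed once \eqref{e:flow condition} holds, since it forces $\tau^{3/4}\le T_1$ and hence $\tau$ small relative to $\mu$ --- the per-window factor is bounded by $\tfrac34\le e^{-\mu\tau}$. A continuity argument then shows $\norm{u(s)}_{L^2}$ never exceeds the ball on which the constant $C(\norm{u_0}_{L^2},p)$ was computed, so the $L^2$ norm stays finite and Corollary~\ref{cor:L2 criterion} upgrades the solution to a global one. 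Iterating over $[n\tau,(n+1)\tau]$ gives $\norm{u(n\tau)}_{L^2}\le e^{-\mu n\tau}\norm{u_0}_{L^2}$, and interpolating across each window with the uniform short-time growth bound produces the stated inequality with a constant $\beta=\beta(\mu,\tau)$. The crux throughout is the split bounding of $I$: extracting genuine smallness in $\tau$ and $\norm{v}_{L^\infty}$ from a forcing that lives only in $L^1$, which the comparison with the non-advective biharmonic flow is designed to accomplish.
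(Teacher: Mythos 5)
Your overall architecture --- contraction of the $L^2$ norm window by window via Duhamel's formula for $\mathcal S_{s,s+\tau}$, the dissipation-time bound for the linear part, and a comparison of the advective propagator with $e^{-t\FL}$ to handle the $L^1$-only forcing $N(u)$ --- is the same as the paper's (your comparison estimate is exactly Lemma~\ref{lem:diff btw hdq and ahdq}, and your $\bar I$ and $I-\bar I$ are the paper's terms $I_3$ and $I_2$). However, there is a genuine gap in the step where you claim the per-window bound with the factors $\norm{v}_{L^\infty}\tau^{5/4}+\tau^{3/4}$. Your input to the Duhamel estimates is a bound on $\int_s^{s+\tau}\norm{\lap u}_{L^2}^2\,dr$ ``by a constant depending only on $\norm{u_0}_{L^2}$ and $p$''; the energy identity does give such a bound, but it is of size $\norm{u(s)}_{L^2}^2+O(\tau)$, i.e.\ $O(1)$ in $\tau$, not $O(\tau)$. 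Writing $B=\norm{u(s)}_{L^2}$ and running H\"older as in the paper with $\norm{N(u)}_{L^1}\le C\norm{\lap u}_{L^2}^{p/2}\norm{u}_{L^2}^{(p-2)/2}$ gives
\begin{equation*}
\int_s^{s+\tau}(s+\tau-r)^{-\frac14}\norm{N(u(r))}_{L^1}\,dr
\le C B^{\frac{p-2}{2}}\,\tau^{\frac{3-p}{4}}\Bigl(\int_s^{s+\tau}\norm{\lap u}_{L^2}^2\,dr\Bigr)^{\frac{p}{4}},
\end{equation*}
so an $O(1)$ bound on the last integral produces only $\tau^{(3-p)/4}$ (and $\norm{v}_{L^\infty}\tau^{(5-p)/4}$ for the correction term), not $\tau^{3/4}$ and $\norm{v}_{L^\infty}\tau^{5/4}$. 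This matters: condition \eqref{e:flow condition} allows $\norm{v}_{L^\infty}$ to be as large as $T_1(\tau^*)^{-5/4}$, in which case $\norm{v}_{L^\infty}(\tau^*)^{(5-p)/4}=T_1(\tau^*)^{-p/4}\to\infty$ as $\tau^*\to0$, so your estimate cannot be closed under the stated hypothesis; it would only prove the theorem under a strictly stronger flow condition.

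The missing idea is the dichotomy of Lemmas~\ref{lem:mean large H2} and~\ref{lem: mean small H2}: either $\frac{1}{\tau^*}\int_{t_0}^{t_0+\tau^*}\norm{\lap u}_{L^2}^2\,dt\ge 2A_p(2\norm{u(t_0)}_{L^2})^{2/(3-p)}+2\mu\norm{u(t_0)}_{L^2}^2$, in which case the energy identity alone already yields $\norm{u(t_0+\tau^*)}_{L^2}\le e^{-\mu\tau^*}\norm{u(t_0)}_{L^2}$ and no Duhamel argument is needed; or the average is below this threshold, and then $\int_{t_0}^{t_0+\tau^*}\norm{\lap u}_{L^2}^2\,dt$ is genuinely $O(\tau^*)$, which after H\"older restores the exponents $5/4$ and $3/4$ and lets your splitting of $I$ close under \eqref{e:flow condition}. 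With that case split inserted (plus Lemma~\ref{lem: L2 unif bdd} to keep $\sup_{[t_0,t_0+\tau^*]}\norm{u}_{L^2}\le2\norm{u(t_0)}_{L^2}$ on each window), the rest of your iteration, the continuity/bootstrap step, and the choice of $\beta$ go through essentially as you describe.
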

\begin{remark}
The dependence of $T_1$ can be computed explicitly, as can be seen from equation \eqref{e:dependence of T1}. It indicates that $T_1$ is a decreasing function of $\norm{u_0}_{L^2}$, and it also implies $\tau^*(v)\leqslant\min\left\{\frac{1}{10\mu},T_0^2(\norm{u_0}_{L^2})\right\}$.
\end{remark}
Before proving the theorem, we discuss the existence of flow that satisfies the assumption
\eqref{e:flow condition}.
\begin{proposition}\label{prop:existence of flow}
Let $v\in L^{\infty}([0,\infty);C^2(\T^d))$, and define $v_{A}=Av(x,At)$. If $v$ is weakly mixing flow with rate function $h$, then
\begin{equation*}
    \tau^{*}(v_A) \xrightarrow{A \to \infty} 0. 
\end{equation*}
If further $v$ is strongly mixing with rate function $h$, and 
\begin{equation*}
    t^4 h(t) \xrightarrow{A \to \infty} 0,
\end{equation*}
then
\begin{equation*}
    \norm{v_A}_{L^\infty([0,\infty);L^{\infty})}^{\frac{4}{5}}\tau^*(v_A) \xrightarrow{A \to \infty} 0.
\end{equation*}
\end{proposition}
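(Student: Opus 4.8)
The plan is to relate the dissipation time of the rescaled flow $v_A$ to the mixing properties of $v$ via the linear advective-hyperdiffusion semigroup $\mathcal{S}_{s,s+t}$. The key object is the operator norm $\|\mathcal{S}^{A}_{s,s+t}\|_{L^2_0\to L^2_0}$ associated to $v_A$, and the strategy is to combine two competing mechanisms: the pure hyperdiffusion $e^{-t\Delta^2}$ contracts high-frequency modes, while the mixing flow $v_A$ transfers energy from low frequencies (where diffusion is weak) to high frequencies (where diffusion is strong). For the weakly mixing case, I would fix a small threshold $\delta>0$ and split the spectrum into low modes $\{|\k|\le K\}$ and high modes $\{|\k|>K\}$. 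On high modes, the factor $e^{-t|\k|^4}$ alone forces decay below $1/2$ in a time $\sim K^{-4}$. On low modes, weak mixing with rate function $h$ guarantees that the advection drives the low-frequency mass below any prescribed level after a flow-time that, under the rescaling $x\mapsto Av(x,At)$, is compressed by the large parameter $A$.

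The key technical step is the \textbf{time rescaling for the rescaled flow}. If $\phi$ solves $\partial_t\phi+v_A\cdot\grad\phi+\Delta^2\phi=0$, then setting $\psi(x,t)=\phi(x,t/A)$ transforms the equation into $\partial_t\psi+v\cdot\grad\psi+A^{-1}\Delta^2\psi=0$, i.e. an advection-diffusion equation driven by the \emph{unscaled} flow $v$ but with diffusivity $A^{-1}\to 0$. Thus bounding the dissipation time of $v_A$ reduces to a vanishing-diffusivity enhanced-dissipation estimate for the fixed mixing flow $v$. I would then invoke the standard enhanced-dissipation / relaxation-enhancement machinery for mixing flows (in the spirit of the references \cites{ConstantinKiselevEA08,CotiZelatiDelgadinoEA18} cited in the introduction): weak mixing with rate $h$ implies that solutions of the advection equation decay in a negative Sobolev norm, and coupling this decay to the (even vanishingly small) diffusion yields $L^2_0$ contraction in a time that tends to $0$ as the diffusivity tends to $0$, i.e.\ as $A\to\infty$. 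This gives $\tau^*(v_A)\to 0$.

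For the second assertion, I would \textbf{quantify} the above. Strong mixing with rate $h$ means $\|\mathcal{S}^{0}_{0,t}f\|_{\dot H^{-1}}\le h(t)\|f\|_{\dot H^1}$ (or the analogous Lipschitz-to-$H^{-1}$ bound) for the pure transport by $v$; the hypothesis $t^4h(t)\to 0$ is tuned precisely so that, after undoing the time rescaling, the interplay between the fourth-order diffusion $\Delta^2$ and the mixing decay produces a dissipation time $\tau^*(v_A)$ for which $\|v_A\|_{L^\infty L^\infty}^{4/5}\tau^*(v_A)\to 0$. The scaling bookkeeping is: $\|v_A\|_{L^\infty L^\infty}=A\|v\|_{L^\infty L^\infty}$, and the relevant diffusion-versus-mixing balance for a fourth-order operator produces $\tau^*(v_A)\sim A^{-1}\cdot(\text{mixing time at diffusivity }A^{-1})$, where the $t^4 h(t)$ condition controls how fast the mixing time shrinks. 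The exponent $4/5$ emerges from optimizing the split-frequency estimate against the $|\k|^4$ diffusion symbol, matching the powers $\tfrac54$ and $\tfrac34$ appearing in the flow condition \eqref{e:flow condition}.

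\textbf{The main obstacle} I anticipate is making the enhanced-dissipation estimate fully quantitative for the \emph{fourth-order} operator $\Delta^2$ rather than the usual second-order Laplacian treated in the cited literature; the abstract relaxation-enhancement theorems are typically stated for $-\Delta$, so I would either adapt their functional-analytic proof (the RAGE-theorem argument works verbatim once the diffusion generator is replaced by $\Delta^2$, since only its spectral gap and self-adjointness are used) or carry out the explicit low/high frequency split above and track all constants through the rescaling. The delicate point is verifying that the precise power $4/5$ and the exact form of the hypothesis $t^4 h(t)\to 0$ are what the fourth-order balance demands; I would confirm this by optimizing the cutoff frequency $K$ in the two-regime estimate as a function of $A$ and the mixing rate, which is the calculation I would defer to the detailed proof.
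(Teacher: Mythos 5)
The paper gives no proof of this proposition: it is delegated to Proposition 1.4 of \cite{feng2020global}, whose argument is precisely the one you outline, so your approach is the intended one. Your central rescaling identity is correct and does the main work: if $\phi$ solves $\partial_t\phi+v_A\cdot\grad\phi+\lap^2\phi=0$, then $\psi(x,\sigma)=\phi(x,\sigma/A)$ solves $\partial_\sigma\psi+v\cdot\grad\psi+A^{-1}\lap^2\psi=0$, whence $\tau^*(v_A)=A^{-1}\tilde\tau^*(A^{-1})$, where $\tilde\tau^*(\nu)$ denotes the halving time of the flow $v$ at hyperdiffusivity $\nu$; since $\norm{v_A}_{L^\infty L^\infty}=A\norm{v}_{L^\infty L^\infty}$, the two assertions reduce to $\tilde\tau^*(\nu)=o(\nu^{-1})$ and $\tilde\tau^*(\nu)=o(\nu^{-1/5})$ as $\nu\to 0$. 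Two caveats. First, as written this is a plan rather than a proof: the decisive quantitative step --- bounding $\tilde\tau^*(\nu)$ in terms of the mixing rate $h$ for the fourth-order generator $\lap^2$, which is exactly where the exponent $\tfrac{4}{5}$ and the hypothesis $t^4h(t)\to 0$ (the limit should read $t\to\infty$; the statement's $A\to\infty$ is a typo) must be verified --- is deferred, and this is the only nontrivial content beyond the rescaling. Second, the RAGE-theorem route you mention is tailored to autonomous (or time-periodic) flows, whereas the dissipation time demands a bound uniform in the starting time $s$ for a general time-dependent $v$; the cited reference instead works directly from the quantitative, uniform-in-$s$ mixing definition (Definition 3.1 of \cite{feng2020global}), splitting frequencies and trading the $\dot H^{-1}$ decay supplied by mixing against the $\abs{\k}^4$ dissipation symbol, and that is the route you should follow to extract the stated exponents.
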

\begin{remark}
\begin{itemize}
    \item [(1).] For the definition of weakly mixing flow and strongly mixing flow we refer reader to Definition 3.1 of~\cite{feng2020global}.
    \item [(2).] The proof of Proposition~\ref{prop:existence of flow} is in the same spirit of the proof of Proposition 1.4 of~\cite{feng2020global}, therefore we omit it here.
    \item[(3).]  There are other families of flows that we can manipulate and gain arbitrary small dissipation time, one typical type of flows are so-called mixing flows, which are the flows that can produce small scale structures. To the best of our knowledge, the existence of smooth time-independent or even time-periodic, mixing flows remains an open question (see, for example \cite{ElgindiZlatos19}). Nevertheless, there are many examples of (spatially) smooth, time-dependent flows can be used or be expected to provide small dissipation times, for example the alternating horizontal/vertical sinusodial shear flows with randomized phases. One can check \cite{feng2020global,Pierrehumbert94,naraigh2007bubbles,naraigh2008bounds} for more discussion.
\end{itemize}
\end{remark}
Now we turn to the proofs, we divide the proof of the main theorem into several lemmas. In the following lemma, we first show that the $L^2$ norm of $u$ grows continuously.

\begin{lemma}
\label{lem: L2 unif bdd}
When $2<p<3$, there is a constant $A_p>0$. For any $B>0$ define
\begin{equation}
\label{e:choice of T_0}
    T_0^2(B)=\int_{B}^{2B}\,\frac{y}{
    A_p y^{2/(3-p)}}\,dy.
    % =\int_{B}^{kB}\,\frac{1}{A_p}y^{\tfrac{1-p}{3-p}}\,dy,
\end{equation}
Let $u$ be a mild solution of~\eqref{e:adv 4 order eqn} on $[0,T]$, and $t_0\in[0,T]$. Then for any $t\in[t_0,t_0+T_0^2(\norm{u(t_0)}_{L^2})] \cap [0,T]$, the following estimate holds:
\begin{equation}
    \norm{u(t)}_{L^2}\leq 2\norm{u(t_0)}_{L^2}.
\end{equation}
\end{lemma}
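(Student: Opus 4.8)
The plan is to establish a differential inequality for the quantity $\norm{u(t)}_{L^2}^2$ and then integrate it over the short time window determined by $T_0^2$. The starting point is the energy identity \eqref{e:energy identity for adv} established in Proposition~\ref{prop:adv mild equi with weak}, which in differential form gives
\begin{equation*}
\frac{d}{dt}\norm{u(t)}_{L^2}^2 = -2\norm{\lap u(t)}_{L^2}^2 + 2\norm{\grad u(t)}_{L^p}^p
\end{equation*}
for a.e. $t$. The advection term drops out of the energy balance because $v$ is divergence free, so the estimate is identical to the non-advective case; this is the reason Lemma~\ref{lem: L2 unif bdd} can be stated for the advective equation but proved by the same computation. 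The task is therefore to control the production term $\norm{\grad u}_{L^p}^p$ purely in terms of $\norm{u}_{L^2}$, after the dissipative term $-2\norm{\lap u}_{L^2}^2$ has absorbed whatever part of it carries $\lap u$.

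First I would apply the Gagliardo--Nirenberg inequality to bound $\norm{\grad u}_{L^p}^p$. Since $2<p<3$ on $\T^2$, interpolating $\norm{\grad u}_{L^p}$ between $\norm{u}_{L^2}$ and $\norm{\lap u}_{L^2}$ (on mean-zero functions, using $\norm{u}_{\dot H^2}\sim\norm{\lap u}_{L^2}$) yields an estimate of the schematic form $\norm{\grad u}_{L^p}^p \le \norm{\lap u}_{L^2}^2 + C\norm{u}_{L^2}^{\gamma}$, where the $\norm{\lap u}_{L^2}^2$ piece is split off with coefficient exactly $1$ via Young's inequality and the residual exponent $\gamma$ is computed from the interpolation. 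This is precisely the inequality already used near the end of the proof of Proposition~\ref{prop:adv mild equi with weak}, where the exponent $\tfrac{2}{3-p}$ appears; I expect $\gamma = \tfrac{2}{3-p}$ to be the dominant power (with a lower-order $\norm{u}_{L^2}^p$ term that is controlled on the bounded set). Feeding this into the energy balance, the two copies of $\norm{\lap u}_{L^2}^2$ cancel and one is left with
\begin{equation*}
\frac{d}{dt}\norm{u(t)}_{L^2}^2 \le A_p\,\norm{u(t)}_{L^2}^{\frac{2}{3-p}},
\end{equation*}
where $A_p$ is the constant named in the statement. Writing $y(t)=\norm{u(t)}_{L^2}^2$ this is a separable ODE differential inequality $y' \le A_p\, y^{1/(3-p)}$.

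Next I would invoke a comparison (Gr\"onwall-type) argument. As long as $y(t)$ has not yet doubled, i.e. $y(t) \le 2\,y(t_0)$, the right-hand side is monotone and the exit time is bounded below by the solution of the associated equality problem. Separating variables, the time required for $y$ to travel from $y(t_0)=\norm{u(t_0)}_{L^2}^2$ up to $2\,y(t_0)$ is at least
\begin{equation*}
\int_{y(t_0)}^{2y(t_0)} \frac{dy}{A_p\, y^{1/(3-p)}},
\end{equation*}
which after rewriting $y$-integration against the squared-norm variable is exactly the expression $T_0^2(\norm{u(t_0)}_{L^2})$ defined in \eqref{e:choice of T_0} (the integrand $y/(A_p y^{2/(3-p)})$ there arises from substituting $B=\norm{u(t_0)}_{L^2}$ and converting the $\norm{u}_{L^2}^2$-variable back to $\norm{u}_{L^2}$, so the two bookkeepings agree). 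Hence for all $t$ in $[t_0,t_0+T_0^2(\norm{u(t_0)}_{L^2})]\cap[0,T]$ the doubling threshold cannot be reached, giving $\norm{u(t)}_{L^2}\le 2\norm{u(t_0)}_{L^2}$.

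The main obstacle I anticipate is not the ODE comparison, which is routine, but the bookkeeping that matches the Gagliardo--Nirenberg residual exponent to the exponent $\tfrac{2}{3-p}$ baked into the definition \eqref{e:choice of T_0} of $T_0^2$, and the careful absorption of the dissipative term with sharp constant $1$ so that $\norm{\lap u}_{L^2}^2$ fully cancels. One subtlety is that the energy identity is only known to hold in integrated form with $\lap u\in L^2$ away from $t=0$; to differentiate it I would work on $[t_0+\delta, \cdot]$ and pass $\delta\to 0$ using the continuity of $\norm{u(t)}_{L^2}$ at the left endpoint, exactly as in Proposition~\ref{prop:adv mild equi with weak}. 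A second point to handle cleanly is that the interpolation must be run on the mean-zero part so that $\norm{\lap u}_{L^2}$ genuinely controls $\norm{\grad u}_{L^p}$; since we reduced to $u_0\in L^2_0$ and the equation is mean conserved, this is available throughout.
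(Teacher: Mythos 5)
Your proof takes essentially the same route as the paper: differentiate the energy identity \eqref{e:energy identity for adv}, use Gagliardo--Nirenberg and Young's inequality to absorb $\norm{\lap u}_{L^2}^2$ and obtain $\norm{u}_{L^2}\tfrac{d}{dt}\norm{u}_{L^2}\le A_p\norm{u}_{L^2}^{2/(3-p)}$, then integrate the resulting separable inequality to read off the non-doubling time $T_0^2$. One small bookkeeping correction: after the substitution $y=\norm{u}_{L^2}^2$ your integral $\int_{y(t_0)}^{2y(t_0)}\frac{dy}{A_p\,y^{1/(3-p)}}$ is not \emph{exactly} $T_0^2(\norm{u(t_0)}_{L^2})$ as claimed (doubling $y$ corresponds to multiplying $\norm{u(t_0)}_{L^2}$ by $\sqrt{2}$, not by $2$), but one checks that this integral is in fact no smaller than $T_0^2(\norm{u(t_0)}_{L^2})$, so the comparison goes in the favorable direction and the argument still closes, even giving the slightly stronger bound $\norm{u(t)}_{L^2}\le\sqrt{2}\,\norm{u(t_0)}_{L^2}$.
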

\begin{remark}\label{rmk:decrease_of_T0}
Note that for $B>0$ and $2<p<3$, $T_0^2(B)$ is a decreasing function. Since we can check that
\begin{equation}
\frac{d}{dB}(T_0^2(B))=\frac{1}{A_p} \left(2^{\frac{2(2-p)}{3-p}}-1 \right)B^{\tfrac{1-p}{3-p}}<0.
\end{equation}
\end{remark}
 In the next two lemmas, we show that if the time average of $\norm{\lap u}_{L^2}^2$ is large, then $\norm{u}_{L^2}$ will decrease exponentially. Conversely, if the time average of $\norm{\lap u}_{L^2}^2$ is small, then the dissipation enhancement phenomenon will still cut down $\norm{u}_{L^2}$ exponentially with a comparable rate. We start with the first case in the following lemma.
\begin{lemma}
\label{lem:mean large H2}
When $2<p<3$, let $\mu>0$ and $u$ be a mild solution of \eqref{e:adv 4 order eqn} on $[0,T]$ with $t_0\in[0,T]$. If for some $\tau\in\left(0, T_0^2(\norm{u(t_0)}_{L^2})\right)$, with $[t_0, t_0+\tau]\subset[0,T]$, and further
\begin{equation}
\label{e:mean large of H2}
    \frac{1}{\tau}\int_{t_0}^{t_0+\tau}\,\norm{\lap u(t)}_{L^2}^2\,dt\geq 2A_p\left(2\norm{u(t_0)}_{L^2}\right)^{\tfrac{2}{3-p}}+2\mu\norm{u(t_0)}_{L^2}^2,
\end{equation}
where $A_p$ is the constant in Lemma \ref{lem: L2 unif bdd}, then
\begin{equation}
\label{e:exp decay of L2}
    \norm{u(t_0+\tau)}_{L^2}\leq e^{-\mu\tau}\norm{u(t_0)}_{L^2}.
\end{equation}
\end{lemma}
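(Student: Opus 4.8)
The plan is to run the energy balance \eqref{e:energy identity for adv} over the subinterval $[t_0,t_0+\tau]$ and show that the hypothesis \eqref{e:mean large of H2} forces a strict loss of $L^2$ energy over that interval. First I would restart the mild solution at time $t_0$ (legitimate by the semigroup structure of $e^{-t\FL}$ and uniqueness in $\Tilde{\FS}_T$), so that Proposition~\ref{prop:adv mild equi with weak} applies and yields the integrated identity
\begin{equation*}
\norm{u(t_0+\tau)}_{L^2}^2 + 2\int_{t_0}^{t_0+\tau}\norm{\lap u(s)}_{L^2}^2\,ds = \norm{u(t_0)}_{L^2}^2 + 2\int_{t_0}^{t_0+\tau}\norm{\grad u(s)}_{L^p}^p\,ds.
\end{equation*}

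The crucial estimate is the Gagliardo--Nirenberg inequality underlying Lemma~\ref{lem: L2 unif bdd}: in two dimensions for $2<p<3$ one has $\norm{\grad u}_{L^p}\le C\norm{\lap u}_{L^2}^{(p-1)/p}\norm{u}_{L^2}^{1/p}$, hence $\norm{\grad u}_{L^p}^p\le C\norm{\lap u}_{L^2}^{p-1}\norm{u}_{L^2}$, and Young's inequality with conjugate exponents $\tfrac{2}{p-1}$ and $\tfrac{2}{3-p}$ gives
\begin{equation*}
\norm{\grad u}_{L^p}^p \le \tfrac12\norm{\lap u}_{L^2}^2 + A_p\norm{u}_{L^2}^{2/(3-p)},
\end{equation*}
with $A_p$ exactly the constant appearing in Lemma~\ref{lem: L2 unif bdd}. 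Substituting this into the identity, the factor $\tfrac12$ produces a partial cancellation of the two dissipation integrals, leaving
\begin{equation*}
\norm{u(t_0+\tau)}_{L^2}^2 \le \norm{u(t_0)}_{L^2}^2 - \int_{t_0}^{t_0+\tau}\norm{\lap u(s)}_{L^2}^2\,ds + 2A_p\int_{t_0}^{t_0+\tau}\norm{u(s)}_{L^2}^{2/(3-p)}\,ds.
\end{equation*}

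Next I would control the last integral. Because $\tau<T_0^2(\norm{u(t_0)}_{L^2})$, Lemma~\ref{lem: L2 unif bdd} guarantees $\norm{u(s)}_{L^2}\le 2\norm{u(t_0)}_{L^2}$ throughout $[t_0,t_0+\tau]$, so that integral is at most $2A_p\tau\,(2\norm{u(t_0)}_{L^2})^{2/(3-p)}$. The hypothesis \eqref{e:mean large of H2} is tailored so that precisely this quantity is the portion of $\int_{t_0}^{t_0+\tau}\norm{\lap u}_{L^2}^2$ furnished by the first summand of its right-hand side; after cancelling these equal terms only the $2\mu\norm{u(t_0)}_{L^2}^2$ contribution survives, yielding $\norm{u(t_0+\tau)}_{L^2}^2\le(1-2\mu\tau)\norm{u(t_0)}_{L^2}^2$. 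The conclusion \eqref{e:exp decay of L2} then follows from $1-x\le e^{-x}$ and taking square roots.

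The main obstacle is not any individual estimate but ensuring the constants are matched exactly: the coefficient $A_p$ emerging from the Young step must be identical to the $A_p$ defining $T_0^2$ in Lemma~\ref{lem: L2 unif bdd}, and the factor $2$ together with the exponent $2/(3-p)$ must align with \eqref{e:mean large of H2}, so that the \emph{large dissipation} term cancels cleanly and the sharp rate $e^{-\mu\tau}$ survives rather than some weaker decay. A secondary technical point is justifying the energy identity on the shifted interval $[t_0,t_0+\tau]$ and confirming that the assumption $\tau<T_0^2(\norm{u(t_0)}_{L^2})$ indeed keeps $u$ in the regime where Lemma~\ref{lem: L2 unif bdd} is applicable.
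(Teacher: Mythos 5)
Your proposal is correct and follows essentially the same route as the paper: the paper integrates the differential inequality \eqref{e:energy estimate} (which is itself the energy identity \eqref{e:energy identity for adv} combined with the Gagliardo--Nirenberg and Young steps you spell out), bounds the $\norm{u}_{L^2}^{2/(3-p)}$ term via Lemma~\ref{lem: L2 unif bdd}, and cancels against hypothesis \eqref{e:mean large of H2} to reach $(1-2\mu\tau)\norm{u(t_0)}_{L^2}^2 \le e^{-2\mu\tau}\norm{u(t_0)}_{L^2}^2$ exactly as you do. The constant matching you worry about is automatic because $A_p$ is \emph{defined} as the constant produced by this Young step in the proof of Lemma~\ref{lem: L2 unif bdd}.
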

For the second case, if the time average of $\norm{\lap u}_{L^2}^2$ is small, we show that if $\tau^*(v)$ is small enough, then $\norm{u}_{L^2}^2$ will still decrease with a comparable rate.
\begin{lemma}
\label{lem: mean small H2}
When $2<p<3$, let $v\in L^\infty([0,\infty);L^{\infty})$, $\mu>0$, and $u$ be a mild solution of~\eqref{e:adv 4 order eqn} on $[0,T]$. Assume $t_0\in[0,T]$, then there exists a threshold value
\begin{equation}\label{e:dependence of T1}
  T_1=\min\left\{ \frac{2}{5C\left(A_p B^{\frac{2(p-2)}{3-p}}+\mu\right)^{\frac{p}{4}}B^{p-2}},\left(\frac{1}{10\mu}\right)^{\frac{3}{4}}, \left(T_0^2(\norm{u(t_0)}_{L^2})\right)^{\frac{3}{4}}\right\}
\end{equation}
such that if 
\begin{equation}
\left(\norm{v}_{L^\infty([0,\infty);L^{\infty})}\left(\tau^*(v)\right)^{\frac{5}{4}}+\left(\tau^*(v)\right)^{\frac{3}{4}}\right)\leqslant T_1(\norm{u_0}_{L^2}, \mu, p),
\end{equation}
and $t_0+\tau^*\leqslant T$ such that
\begin{equation}
\label{e:mean small of H2}
    \frac{1}{\tau^*}\int_{t_0}^{t_0+\tau^*}\,\norm{\lap u(t)}_{L^2}^2\,dt\leq 2A_p\left(2\norm{u(t_0)}_{L^2}\right)^{\tfrac{2}{3-p}}+2\mu\norm{u(t_0)}_{L^2}^2,
\end{equation}
where $A_p$ is the constant in Lemma \ref{lem: L2 unif bdd}. Then~\eqref{e:exp decay of L2} still holds at time $\tau=\tau^*$.
\end{lemma}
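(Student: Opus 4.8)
The plan is to exploit the dissipation-enhancing effect of the linear advective semigroup $\mathcal S_{t_0,t_0+\tau^*}$ in the regime where the nonlinear dissipation $\norm{\lap u}_{L^2}^2$ is too small to drive the decay by itself. The central idea is a comparison argument: since the genuine nonlinear solution $u$ is hard to control directly (the nonlinearity $N(u)$ only lives in $L^1$, not $L^2$), I would introduce the solution $w$ of the \emph{linear} advective hyperdiffusion equation~\eqref{e:adv hyper diff} with the same initial datum $w(t_0)=u(t_0)$, so that $w(t_0+\tau^*)=\mathcal S_{t_0,t_0+\tau^*}u(t_0)$ and hence, by the definition of dissipation time~\eqref{e:dtimeDef}, $\norm{w(t_0+\tau^*)}_{L^2}\le \tfrac12\norm{u(t_0)}_{L^2}$ whenever $\tau^*\ge\tau^*(v)$. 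The quantity to estimate is then the difference $d\defeq u-w$, which solves a forced linear advection-hyperdiffusion equation with zero initial data and forcing $-N(u)=-\grad\cdot(\abs{\grad u}^{p-2}\grad u)$; writing it in Duhamel/mild form using the operator $\mathcal S$, I would bound $\norm{d(t_0+\tau^*)}_{L^2}$ by integrating the smoothing estimates for the semigroup against the $L^1$ norm of $N(u)$.

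Concretely, I would first set up the mild/Duhamel representation
\begin{equation*}
  d(t_0+\tau^*)=-\int_{t_0}^{t_0+\tau^*}\mathcal S_{s,t_0+\tau^*}\,N(u)(s)\,ds,
\end{equation*}
using the alternative form of the mild solution recorded in Remark~\ref{rmk:redefine N}, which is exactly why $N(u)\in L^1([0,T]\times\T^2)$ matters. To estimate the integrand I would combine the $L^1\to L^2$ smoothing of the hyperdiffusion semigroup (a gain of $(t_0+\tau^*-s)^{-1/2}$ in two dimensions, since $\grad e^{-t\FL}$ and the heat-type kernel give $t^{-1/4}$ per derivative) with the pointwise bound on $N(u)$ derived in the proof of Proposition~\ref{prop:mild equi with weak}, namely $\abs{N(u)}\le\abs{\grad u}^{p-2}\abs{\lap u}$ up to constants, so that $\norm{N(u)(s)}_{L^1}\lesssim\norm{\grad u}_{L^{2(p-2)}}^{p-2}\norm{\lap u}_{L^2}$. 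Feeding the Gagliardo--Nirenberg interpolation of $\norm{\grad u}_{L^{2(p-2)}}$ in terms of $\norm{\lap u}_{L^2}$ and $\norm{u}_{L^2}$ into this, and using the $L^2$ a priori bound $\norm{u(s)}_{L^2}\le 2\norm{u(t_0)}_{L^2}$ from Lemma~\ref{lem: L2 unif bdd} (valid because $\tau^*<T_0^2(\norm{u(t_0)}_{L^2})$), the time integral collapses to the smallness hypothesis~\eqref{e:mean small of H2} on the average of $\norm{\lap u}_{L^2}^2$ together with explicit powers of $\tau^*$ and $\norm{v}_{L^\infty}$. The resulting bound should take the schematic shape
\begin{equation*}
  \norm{d(t_0+\tau^*)}_{L^2}\le C\bigl(\norm{v}_{L^\infty}(\tau^*)^{5/4}+(\tau^*)^{3/4}\bigr)\bigl(A_pB^{2(p-2)/(3-p)}+\mu\bigr)^{p/4}B^{p-2}\,\norm{u(t_0)}_{L^2},
\end{equation*}
with $B=\norm{u(t_0)}_{L^2}$, which is precisely why the threshold $T_1$ in~\eqref{e:dependence of T1} is defined as the reciprocal of those constants: the flow condition forces the prefactor to be at most $\tfrac25$ (say), so $\norm{d(t_0+\tau^*)}_{L^2}\le\tfrac25\norm{u(t_0)}_{L^2}$.

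Finally I would combine the two pieces by the triangle inequality,
\begin{equation*}
  \norm{u(t_0+\tau^*)}_{L^2}\le\norm{w(t_0+\tau^*)}_{L^2}+\norm{d(t_0+\tau^*)}_{L^2}\le\Bigl(\tfrac12+\tfrac25\Bigr)\norm{u(t_0)}_{L^2},
\end{equation*}
and then check that $\tfrac12+\tfrac25=\tfrac{9}{10}\le e^{-\mu\tau^*}$, which is where the second term $(\tfrac{1}{10\mu})^{3/4}$ in the definition of $T_1$ enters: since the flow condition forces $(\tau^*)^{3/4}\le T_1\le(\tfrac{1}{10\mu})^{3/4}$, we get $\mu\tau^*\le\tfrac{1}{10}$, and $e^{-\mu\tau^*}\ge 1-\mu\tau^*\ge\tfrac{9}{10}$, giving exactly~\eqref{e:exp decay of L2} at $\tau=\tau^*$. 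The main obstacle I anticipate is the difference estimate: one has to track the semigroup smoothing powers carefully so that the time integral of $(t_0+\tau^*-s)^{-1/2}\norm{\lap u(s)}_{L^2}$, after a Cauchy--Schwarz split into $(\tau^*)^{\text{power}}\cdot(\int\norm{\lap u}_{L^2}^2)^{1/2}$, produces the correct fractional powers $(\tau^*)^{5/4}$ and $(\tau^*)^{3/4}$ attached respectively to the advection contribution and the pure-hyperdiffusion contribution, and so that the dependence on $B$ and $\mu$ matches the explicit $T_1$; keeping the Gagliardo--Nirenberg exponents consistent with $2<p<3$ throughout is the delicate bookkeeping step.
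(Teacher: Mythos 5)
Your decomposition is exactly the paper's: writing
$u(t_0+\tau^*)=\mathcal S_{t_0,t_0+\tau^*}u(t_0)-\int_{t_0}^{t_0+\tau^*}\mathcal S_{s,t_0+\tau^*}N(u(s))\,ds$
(your $w$ and $d$ are precisely these two terms), killing the first term with the dissipation time, and closing with $\tfrac12+\tfrac25\le 1-\mu\tau^*\le e^{-\mu\tau^*}$. The one genuinely new ingredient, however, is the step you gloss over: you propose to bound $\mathcal S_{s,t_0+\tau^*}N(u(s))$ in $L^2$ ``by the $L^1\to L^2$ smoothing of the hyperdiffusion semigroup,'' but $\mathcal S_{s,t}$ is the \emph{advective} semigroup, and no $L^1\to L^2$ smoothing estimate for it is available. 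The paper resolves this by splitting once more, $\mathcal S_{s,t}=\bigl(\mathcal S_{s,t}-e^{-(t-s)\FL}\bigr)+e^{-(t-s)\FL}$, and proving by an energy argument (Lemma~\ref{lem:diff btw hdq and ahdq}) that $\bigl\|\bigl(\mathcal S_{s,t}-e^{-(t-s)\FL}\bigr)f\bigr\|_{L^2}\le C(t-s)^{1/4}\norm{v}_{L^\infty}\norm{f}_{L^1}$; this comparison is precisely where the $\norm{v}_{L^\infty}(\tau^*)^{5/4}$ term in your schematic bound comes from. Your final bound has the right shape, so you clearly anticipate the two contributions, but without this lemma (or some substitute) the estimate of the Duhamel term does not close.

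Two exponent slips in the same step would also derail the computation as written. The $L^1\to L^2$ gain of $e^{-t\FL}$ on $\T^2$ is $t^{-1/4}$ (Lemma~\ref{lem: semi group on divergence} with $p=3$), not $t^{-1/2}$: in the form of Remark~\ref{rmk:redefine N} the divergence is already inside $N(u)$, so no extra derivative falls on the semigroup. Moreover, since $\norm{N(u)}_{L^1}\lesssim\norm{\lap u}_{L^2}^{p/2}\norm{u}_{L^2}^{(p-2)/2}$ carries the power $p/2$ of $\norm{\lap u}_{L^2}$, the time integral must be split by H\"older with exponents $\tfrac{4}{4-p}$ and $\tfrac{4}{p}$, yielding $(\tau^*)^{(3-p)/4}\cdot\bigl(\int\norm{\lap u}_{L^2}^2\bigr)^{p/4}$, rather than by Cauchy--Schwarz against $\bigl(\int\norm{\lap u}_{L^2}^2\bigr)^{1/2}$; and with your $t^{-1/2}$ singularity the conjugate power $(t-s)^{-2/(4-p)}$ is not even integrable for $2<p<3$. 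With the correct $t^{-1/4}$ gain and the $p$-dependent H\"older split, the hypothesis \eqref{e:mean small of H2} and Lemma~\ref{lem: L2 unif bdd} do produce exactly the $(\tau^*)^{3/4}$ and $(\tau^*)^{5/4}$ powers you state, and the rest of your argument goes through as in the paper.
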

Temporally assuming Lemma~\ref{lem: L2 unif bdd}, Lemma~\ref{lem:mean large H2} and Lemma~\ref{lem: mean small H2}, we prove Theorem~\ref{thm:global existence} first.
\begin{proof}[Proof of Theorem \ref{thm:global existence}]
By Lemma~\ref{lem:mean large H2} and Lemma~\ref{lem: mean small H2}, estimate~\eqref{e:exp decay of L2} always hold for $\tau=2\tau^*$:
\begin{equation*}
    \norm{u(\tau^*)}_{L^2}\leq e^{-\mu\tau^*}\norm{u_0}_{L^2}.
\end{equation*}
Repeating this argument, we have 
\begin{equation*}
     \norm{u(n\tau^*)}_{L^2}\leq e^{-\mu n\tau^*}\norm{u_0}_{L^2}
\end{equation*}
for any $n\in\mathbb{N}$. For any $t>0$, there exists $n\in\mathbb{N}$ such that $t\in[n\tau^*, (n+1)\tau^*)$. Since $t-n\tau^*\leq\tau^*\leq T_0^2(\norm{u_0}_{L^2})$, further we have
\begin{equation*}
    \norm{u(t)}_{L^2}\leq 2e^{-\mu n\tau^*}\norm{u_0}_{L^2}\leq 2e^{-\mu t+\mu\tau^*}\norm{u_0}_{L^2}\leq \beta e^{-\mu t}\norm{u_0}_{L^2},
\end{equation*}
where $\beta=2e^{\frac{1}{10}}$.
\end{proof}
Now, we remain to prove these lemmas.
\begin{proof}[Proof of Lemma \ref{lem: L2 unif bdd}]
Let $B\defeq\norm{u(t_0)}_{L^2}$. In Proposition~\ref{prop:adv mild equi with weak} we showed that the mild solution is also a weak solution, and it satisfies the energy identity~\eqref{e:energy identity for adv} on $[0, T]$. Taking derivative with respect to $t$ on the both hand sides of identity~\eqref{e:energy identity for adv}, we get
\begin{equation*}
    \norm{u(t)}_{L^2} \cdot \frac{d}{dt}\norm{u(t)}_{L^2}=-\norm{\lap u(t)}_{L^2}^2 + \norm{\grad u(t)}_{L^p}^p.
\end{equation*}
By Gagliardo-Nirenberg's inequality and Young's inequality with $\epsilon$, we further get
\begin{equation}
\label{e:energy estimate}
    \norm{u}_{L^2} \cdot \frac{d}{dt}\norm{u}_{L^2}
    \leq-\frac{1}{2}\norm{\lap u}_{L^2}^2 + A_p\norm{u}_{L^2}^{\tfrac{2}{3-p}}
    \leqslant A_p\norm{u}_{L^2}^{\tfrac{2}{3-p}},
\end{equation}
where $A_p$ is a constant that only depends on $p$.
Thus, for a.e. $t\in(0,T)$ we have
\begin{equation*}
    \frac{d}{dt}\int_{B}^{\norm{u(t)}_{L^2}}\,\frac{y}{A_p y^{\tfrac{2}{3-p}}}\,dy=\frac{\norm{u}_{L^2}\frac{d}{dt}\norm{u}_{L^2}}{A_p\norm{u}_{L^2}^{\frac{2}{3-p}}}\leq 1,
\end{equation*}
which further implies for all $t\in[t_0,t_0+T_0^{2}(B)]$, we have
\begin{equation*}
    \int_{B}^{\norm{u(t)}_{L^2}}\,\frac{y}{A_p y^{\tfrac{2}{3-p}}}\,dy
    \leq t-t_0\leq T_0^2(B)
    = \int_{B}^{2B}\,\frac{y}{A_p y^{\tfrac{2}{3-p}}}\,dy.
\end{equation*}
Since the integrands are strictly positive and identical, which forces $\norm{u(t)}_{L^2}\leq 2B$ in $[t_0,t_0+T_0^{2}(B)]$ and the proof is complete. 
\end{proof}

\begin{proof}[Proof of Lemma \ref{lem:mean large H2}]
By Lemma~\ref{lem: L2 unif bdd}, the $L^2$ norm of $u$ is uniformly bounded by $2\norm{u(t_0)}_{L^2}$ on the interval $[t_0,t_0+\tau]$. Integrating~\eqref{e:energy estimate} from $t_0$ to $t_0+\tau$ and apply the assumption~\eqref{e:mean large of H2}, we get
\begin{align*}
    \norm{u(t_0+\tau)}_{L^2}^2
    &\leq\norm{u(t_0)}_{L^2}^2-\int_{t_0}^{t_0+\tau}\,\norm{\lap u(t)}_{L^2}^2 dt + 2\int_{t_0}^{t_0+\tau}A_p\norm{u(t)}_{L^2}^{\tfrac{2}{3-p}}\,dt\\
    &\leq\norm{u(t_0)}_{L^2}^2-\int_{t_0}^{t_0+\tau}\,\norm{\lap u(t)}_{L^2}^2\,dt + 2\tau A_p\left(2\norm{u(t_0)}_{L^2}\right)^{\tfrac{2}{3-p}}\\
    &\leq(1-2\mu\tau)\norm{u(t_0)}_{L^2}^2\leq e^{-2\mu\tau}\norm{u(t_0)}_{L^2}^2,
\end{align*}
Finally, we take square root on the both sides of the inequality to complete the proof.
\end{proof}
In order to simplify the proof of Lemma~\ref{lem: mean small H2}, we introduce and prove following lemma. 
\begin{lemma}\label{lem:diff btw hdq and ahdq}
For any $f\in L^1$, $v\in L^\infty([0,\infty);L^{\infty})$, consider:
\begin{itemize}
    \item [(1).] Hyper-diffusion equation with initial data $f$:
    \begin{equation}
    \label{e:HDE}
        \theta_t + \lap^2 \theta = 0\quad\text{with}\quad \theta_0=f;
    \end{equation}
    \item [(2).] Advective hyper-diffusion equation with initial data $f$:
    \begin{equation}
    \label{e:AHDE}
        \phi_t + v\cdot\grad\phi + \lap^2 \phi = 0\quad\text{with}\quad \phi_0=f.
    \end{equation}
\end{itemize}
Then 
\begin{equation}
    \norm{e^{-t\FL}f-S_{0, t}f}_{L^2}
    =\norm{\theta(t,\cdot)-\phi(t,\cdot)}_{L^2}\leqslant C t^{1/4}\norm{v}_{L^\infty([0,\infty);L^{\infty})}\norm{f}_{L^1}.
\end{equation}
\begin{proof}
Let $w=\theta-\phi$, subtracting ~\eqref{e:HDE} with~\eqref{e:AHDE} yields
\begin{equation}
    w_t+\lap^2 w+v\cdot\nabla w=v\cdot\nabla\theta
    \quad\text{with}\quad w(0)=0.
\end{equation}
Multiply $w$ on the both to get following energy identity:
\begin{equation}
    \frac{d}{dt}\norm{w(t)}_{L^2}^2+2\norm{\lap w(t)}_{L^2}^2=2\int_{\T^2} v\cdot\nabla\theta\cdot w dx
\end{equation}
Since $v$ is divergence free, we have
\begin{align*}
    \frac{d}{dt}\norm{w(t)}_{L^2}^2+2\norm{\lap w(t)}_{L^2}^2
    &=2\int_{\T^2} \theta v\cdot\nabla w dx\\
    &\leqslant 2\norm{v}_{L^\infty([0,\infty);L^{\infty})}\norm{\theta}_{L^2}\norm{\grad w}_{L^2}\\
    &\leqslant C\norm{v}_{L^\infty([0,\infty);L^{\infty})}^2\norm{\theta}_{L^2}^2+\norm{\lap w}_{L^2}^2,
\end{align*}
where we applied Poincare and Young's inequalities in the last step. Therefore,
\begin{equation*}
    \frac{d}{dt}\norm{w}_{L^2}^2\leqslant C\norm{v}_{L^\infty([0,\infty);L^{\infty})}^2\norm{\theta}_{L^2}^2.
\end{equation*}
Integrating the above inequality from $0$ to $t$ on the both sides gives
\begin{align*}
    \norm{w(t)}_{L^2}^2
    &\leqslant  C\norm{v}_{L^\infty([0,\infty);L^{\infty})}^2\int_0^t\norm{e^{-s\FL}f}_{L^2}^2 ds\\
    &\leqslant C\norm{v}_{L^\infty([0,\infty);L^{\infty})}^2\int_0^t\norm{e^{-s\FL}}_{L^1\rightarrow L^2}^2\norm{f}_{L^1}^2 ds\\
    &\leqslant C\norm{v}_{L^\infty([0,\infty);L^{\infty})}^2\norm{f}_{L^1}^2\int_0^t s^{-\frac{1}{2}}ds\\
    &\leqslant Ct^{\frac{1}{2}}\norm{v}_{L^\infty([0,\infty);L^{\infty})}^2 \norm{f}_{L^1}^2
\end{align*}
Finally, take square root on the both to complete the proof.
\end{proof}
\end{lemma}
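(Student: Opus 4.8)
The plan is to run a direct energy estimate on the difference $w \defeq \theta - \phi$. Since the transport and hyperdissipation operators are linear, subtracting \eqref{e:AHDE} from \eqref{e:HDE} shows that $w$ solves
\begin{equation*}
    w_t + \lap^2 w + v\cdot\grad w = v\cdot\grad\theta, \qquad w(0)=0,
\end{equation*}
so that the forcing $v\cdot\grad\theta$ encodes the only discrepancy between the two flows. Both $e^{-t\FL}$ and $S_{0,t}$ conserve the spatial mean of $f$, hence $w$ is mean-zero for every $t$; this is what will let me invoke a Poincar\'e-type inequality below.

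First I would pair the equation with $w$ in $L^2$. The transport term $\int v\cdot\grad w\, w\,dx$ vanishes because $v$ is divergence free, producing the energy identity
\begin{equation*}
    \frac{d}{dt}\norm{w}_{L^2}^2 + 2\norm{\lap w}_{L^2}^2 = 2\int_{\T^2} v\cdot\grad\theta\, w\, dx .
\end{equation*}
Integrating the right-hand side by parts (again using $\grad\cdot v = 0$) moves the derivative off $\theta$, so the forcing is bounded by $2\norm{v}_{L^\infty}\norm{\theta}_{L^2}\norm{\grad w}_{L^2}$. I would then absorb the gradient of $w$ into the dissipation: for mean-zero functions on $\T^2$ one has $\norm{\grad w}_{L^2}\le\norm{\lap w}_{L^2}$, and Young's inequality hides a copy of $\norm{\lap w}_{L^2}^2$ on the left, leaving
\begin{equation*}
    \frac{d}{dt}\norm{w}_{L^2}^2 \le C\norm{v}_{L^\infty([0,\infty);L^\infty)}^2\norm{\theta(t)}_{L^2}^2 .
\end{equation*}

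The crux is then to integrate this from $w(0)=0$ and control $\int_0^t\norm{\theta(s)}_{L^2}^2\,ds = \int_0^t\norm{e^{-s\FL}f}_{L^2}^2\,ds$ in terms of $\norm{f}_{L^1}$ alone. The only quantitative information on the data is its $L^1$ norm, so I would exploit the $L^1\to L^2$ smoothing of the hyper-diffusion semigroup on $\T^2$: from the multiplier form of $e^{-s\FL}$ (in the same spirit as Lemma~\ref{lem: semi group on divergence} and Lemma~\ref{lem: Hs of semigroup}) one obtains $\norm{e^{-s\FL}}_{L^1\to L^2}\le Cs^{-1/4}$ in two dimensions. Thus $\norm{\theta(s)}_{L^2}^2\le Cs^{-1/2}\norm{f}_{L^1}^2$, and although this is singular at $s=0$ it is integrable, with $\int_0^t s^{-1/2}\,ds = 2t^{1/2}$. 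Hence $\norm{w(t)}_{L^2}^2\le Ct^{1/2}\norm{v}_{L^\infty([0,\infty);L^\infty)}^2\norm{f}_{L^1}^2$, and taking square roots gives the claimed $t^{1/4}$ rate.

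The main obstacle is precisely the low regularity of the initial data: because $f$ lies only in $L^1$, the forcing $v\cdot\grad\theta$ cannot be controlled uniformly near $t=0$, and one must instead rely on the integrability of the $s^{-1/2}$ singularity coming from the $L^1\to L^2$ smoothing estimate. Everything else—the cancellation of the transport term, the mean-zero property of $w$, and the Poincar\'e/Young absorption of $\norm{\lap w}_{L^2}^2$—is routine, so the care goes into verifying the smoothing exponent $s^{-1/4}$ on the torus and confirming that the resulting time integral converges.
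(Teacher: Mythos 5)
Your proposal is correct and follows essentially the same route as the paper's own proof: the same energy estimate on $w=\theta-\phi$ with the transport term cancelled by incompressibility, integration by parts plus Poincar\'e/Young to absorb $\norm{\lap w}_{L^2}^2$, and the $L^1\to L^2$ smoothing bound $\norm{e^{-s\FL}}_{L^1\to L^2}\le Cs^{-1/4}$ (Lemma~\ref{lem: semi group on divergence} with $p=3$) to integrate the singular forcing. Your added remark that $w$ stays mean-zero is a harmless extra justification (the interpolation $\norm{\grad w}_{L^2}\le\norm{\lap w}_{L^2}$ in fact holds for all $w$ on $\T^2$ since the zero mode contributes nothing to either side).
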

Now we are ready to prove Lemma~\ref{lem: mean small H2}.
\begin{proof}[Proof of Lemma~\ref{lem: mean small H2}]
For simplicity, let $B$ denote $\norm{u(t_0)}_{L^2}$. 
Since $u$ is a mild solution of $\eqref{e:adv 4 order eqn}$, and by Remark~\ref{rmk:redefine N} we can write $u(t_0+\tau^*)$ in the following Duhamel's form:
\begin{equation*}
u(t_0+\tau^*)=\mathcal S_{t_0, t_0+\tau^*}u(t_0)\,-\,\int_{t_0}^{t_0+\tau^*}\,\mathcal S_{s, t_0+\tau^*}N(u(s))\,ds.
\end{equation*}
Take the $L^2$ norm on the both hand sides to get:
\begin{align*}
&\norm{u(t_0+\tau^*)}_{L^2}\\
& \quad \leqslant\norm{\mathcal S_{t_0, t_0+\tau^*}u(t_0)}_{L^2}\,+\,\int_{t_0}^{t_0+\tau^*}\,\norm{\mathcal S_{s, t_0+\tau^*}N(u(s))}_{L^2}\,ds\\
&\quad \leqslant \norm{\mathcal S_{t_0, t_0+\tau^*}u(t_0)}_{L^2}\,+\,\int_{t_0}^{t_0+\tau^*}\, \left\|\left(\mathcal S_{s, t_0+\tau^*}-e^{(t_0+\tau^*-s)}\right)N(u(s))\right\|_{L^2}\,ds \\ &\quad \quad +\int_{t_0}^{t_0+\tau^*}\,\norm{e^{(t_0+\tau^*-s)}N(u(s))}_{L^2}\,ds\\
& \quad \defeq I_1+I_2+I_3.
\end{align*}

\medskip

\textit{Estimate of $I_1$.}  By the definition of dissipation time, for the term $\norm{\mathcal S_{t_0, t_0+\tau^*}u(t_0)}_{L^2}$ we have:
    \begin{equation*}
        \norm{\mathcal S_{t_0, t_0+\tau^*}u(t_0)}_{L^2}\leqslant\frac{1}{2}\norm{u(t_0)}_{L^2}=\frac{B}{2}.
    \end{equation*}
    
\medskip

\textit{Estimate of $I_2$.}  Firstly, recall that for a.e. $s>0$ we have $N(u(s))\in L^1(\T^d)$, and 
\begin{equation*}
    \norm{N(u)}_{L^1}\leqslant C\norm{\grad u}_{L^{2(p-2)}}^{p-2}\norm{\lap u}_{L^2}\leqslant C\norm{\lap u}_{L^2}^{\frac{p}{2}}\norm{u}_{L^2}^{\frac{p-2}{2}}.
\end{equation*} 
Therefore, for the second term above we can apply Lemma~\ref{lem:diff btw hdq and ahdq} to get:
\begin{eqnarray*}
&&\int_{t_0}^{t_0+\tau^*}\, \left\|\left(\mathcal S_{s, t_0+\tau^*}-e^{(t_0+\tau^*-s)}\right)N(u(s))\right\|_{L^2}\,ds\\
&& \quad \leqslant C\norm{v}_{L^\infty([0,\infty);L^{\infty})}\int_{t_0}^{t_0+\tau^*}\left(t_0+\tau^*-s\right)^{\frac{1}{4}}\norm{N(u)}_{L^1}ds\\
&& \quad \leqslant C\norm{v}_{L^\infty([0,\infty);L^{\infty})}\int_{t_0}^{t_0+\tau^*}\left(t_0+\tau^*-s\right)^{\frac{1}{4}}\norm{\lap u}_{L^2}^{\frac{p}{2}}\norm{ u}_{L^2}^{\frac{p-2}{2}}ds\\
&& \quad \leqslant
C\norm{v}_{L^\infty([0,\infty);L^{\infty})}\left(2B\right)^{\frac{p-2}{2}}\left(\int_{t_0}^{t_0+\tau^*}\left(t_0+\tau^*-s\right)^{\frac{1}{4-p}}ds\right)^{\frac{4-p}{4}}\left(\int_{t_0}^{t_0+\tau^*}\norm{\lap u(s)}_{L^2}^2 ds\right)^{\frac{p}{4}}\\
&& \quad \leqslant
C\norm{v}_{L^\infty([0,\infty);L^{\infty})}\left(2B\right)^{\frac{p-2}{2}}(\tau^*)^{\frac{5-p}{4}}\left(\tau^*\left(2A_p(2B)^{\frac{2}{3-p}}+2\mu B^2\right)\right)^{\frac{p}{4}}\\
&& \quad \leqslant
C\norm{v}_{L^\infty([0,\infty);L^{\infty})}(\tau^*)^{\frac{5}{4}}B^{p-1}\left(A_p B^{\frac{2(p-2)}{3-p}}+\mu\right)^{\frac{p}{4}}
\end{eqnarray*}
    
\medskip

\textit{Estimate of $I_3$.}  Similarly, for the last term. By applying Lemma~\ref{lem: semi group on divergence} we have
\begin{align*}
&\int_{t_0}^{t_0+\tau^*}\,\norm{e^{(t_0+\tau^*-s)}N(u(s))}_{L^2}\,ds\\
& \quad \leqslant \int_{t_0}^{t_0+\tau^*}\,\left(t_0+\tau^*-s\right)^{-\frac{1}{4}}\norm{N(u)
        }_{L^1}\,ds\\
& \quad \leqslant\int_{t_0}^{t_0+\tau^*}\left(t_0+\tau^*-s\right)^{-\frac{1}{4}}\norm{\lap u}_{L^2}^{\frac{p}{2}}\norm{ u}_{L^2}^{\frac{p-2}{2}}ds\\
& \quad \leqslant
        \left(2B\right)^{\frac{p-2}{2}}\left(\int_{t_0}^{t_0+\tau^*}\left(t_0+\tau^*-s\right)^{-\frac{1}{4-p}}ds\right)^{\frac{4-p}{4}}\left(\int_{t_0}^{t_0+\tau^*}\norm{\lap u(s)}_{L^2}^2 ds\right)^{\frac{p}{4}}\\
& \quad \leqslant
(\tau^*)^{\frac{3}{4}}B^{p-1}\left(A_p B^{\frac{2(p-2)}{3-p}}+\mu\right)^{\frac{p}{4}}
\end{align*}

Finally, combine the estimations above and the assumption \eqref{e:dependence of T1}, we obtain
\begin{align*}
&\norm{u(t_0+\tau^*)}_{L^2}\\
& \quad\leqslant \frac{B}{2}+  C\left(\norm{v}_{L^\infty([0,\infty);L^{\infty})}(\tau^*)^{\frac{5}{4}}+(\tau^*)^{\frac{3}{4}}\right)\left(A_p B^{\frac{2(p-2)}{3-p}}+\mu\right)^{\frac{p}{4}}B^{p-1}\\
& \quad \leqslant (1-\mu\tau^*)B\leqslant e^{-\mu\tau^*}B.
\end{align*}
The proof is complete. 
\end{proof}

\section{Finite time blow-up of the solutions}
\label{sec:final sec}
The goal of this section is to study the blow-up behavior corresponding to equation~\eqref{e:4 order eqn}, in particular, we prove parallel results of \cite[Theorem 1.1, Theorem 1.2]{ishige2020blowup} on the $2$-torus. Moreover, instead of using Picard iteration as in \cite{ishige2020blowup} we utilize Banach contraction mapping theorem to construct solutions, which makes the proof much easier. On the other hand, it is worth to mention that the proof in \cite{ishige2020blowup} makes use of the dilation structure of~\eqref{e:4 order eqn}, which does not hold in our setting. 

Here is the plan of this section: in Section~\ref{sec:revisit semigroup},  we first recall one more semigroup estimate, which will play an essential role of our construction. Next, in Section~\ref{sec:H1 mild soln}, we construct mild solutions induced by an $W^{1,\infty}$ initial data. In Section~\ref{sec:blowup with H2 data}, we first show that the mild solutions constructed in Section~\ref{sec:H1 mild soln} possess better regularity, if the initial data belongs to $H^2\cap W^{1,\infty}$. Then we provide non-trivial $H^2\cap W^{1,\infty}$ initial data such that the corresponding mild solution must blow up in a finite time. Finally, in Section~\ref{sec:characterize blowup}, we show that in the previous example of blow-up, the $L^2$ norm of the solution must blow up, moreover, we characterize the blow-up behaviour by providing a quantitative blow-up rate of the $L^2$ norm.
\subsection{Revisit of Fourier analysis on torus.}
\label{sec:revisit semigroup}
A main ingredient in the proof of the main theorem of this section is to estimate the $L^r$-$L^{\infty}$-norm of $e^{-t\FL}$. Recall for any $f \in L^p(\T^2)$, $p \ge 2$, $\hat{f}=\left\{ \hat{f}(k) \right\}_{k \in \Z^2} \in \ell^2$ and for any $\{a_n\}_{n \in \Z^2} \in \ell^2$ and $x \in \T^2$, $\mathcal F^{-1} \left( \left( a_n \right)_{k \in \Z^2} \right)(x)=\sum\limits_{k \in \Z^2} a_k e^{2\pi i k \cdot x} \in L^2(\T^2)$.

\begin{lemma}
\label{lem:L_infty estimate}
Let $1\leqslant r\leqslant\infty$ and $j=\{0,1,2,\cdots\}$,  then for any $f\in L^{r}(\T^2)$ we have
\begin{equation}
\norm{\grad^{j}e^{-t\FL} f}_{L^{\infty}}\leqslant C_{j}t^{-\tfrac{1}{2r}-\tfrac{j}{4}}\norm{f}_{L^{r}},
\end{equation}
for some $C_j>0$.
\end{lemma}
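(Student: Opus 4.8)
The plan is to represent $e^{-t\FL}$ as convolution against a kernel and to reduce the estimate to an $L^{r'}$ bound on that kernel, where $r'$ denotes the H\"older conjugate of $r$. Writing $G_t\defeq\FF^{-1}\big(e^{-t\abs{\cdot}^4}\big)$ for the periodic kernel with Fourier coefficients $\widehat{G_t}(\k)=e^{-t\abs{\k}^4}$, we have $e^{-t\FL}f=G_t*f$, and since $\grad$ commutes with the Fourier multiplier, $\grad^{j}e^{-t\FL}f=(\grad^{j}G_t)*f$. Young's convolution inequality on $\T^2$ then gives
\begin{equation*}
\norm{\grad^{j}e^{-t\FL}f}_{L^\infty}\leq\norm{\grad^{j}G_t}_{L^{r'}(\T^2)}\,\norm{f}_{L^{r}},\qquad\tfrac1r+\tfrac1{r'}=1,
\end{equation*}
so it suffices to establish the kernel bound $\norm{\grad^{j}G_t}_{L^{r'}(\T^2)}\leq C_j\,t^{-\frac{1}{2r}-\frac{j}{4}}$.

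The heart of the matter is a scaling computation on the whole space, which is the source of the exponent. Let $K_t\defeq\FF_{\R^2}^{-1}\big(e^{-t\abs{\xi}^4}\big)$ be the corresponding kernel on $\R^2$. Since $\xi\mapsto e^{-\abs{\xi}^4}$ is a Schwartz function, so is $K_1$; in particular $\grad^{j}K_1\in L^{r'}(\R^2)$ for every $1\le r'\le\infty$ and every $j$, and $\grad^{j}K_1$ has super-polynomial decay. The dilation $\xi\mapsto t^{-1/4}\xi$ yields the self-similar identity
\begin{equation*}
K_t(x)=t^{-1/2}\,K_1\!\big(t^{-1/4}x\big),\qquad\grad^{j}K_t(x)=t^{-1/2-j/4}\,\big(\grad^{j}K_1\big)\!\big(t^{-1/4}x\big).
\end{equation*}
Taking the $L^{r'}(\R^2)$ norm and substituting $y=t^{-1/4}x$ (Jacobian $t^{1/2}$) gives
\begin{equation*}
\norm{\grad^{j}K_t}_{L^{r'}(\R^2)}=t^{-\frac12-\frac{j}{4}+\frac{1}{2r'}}\,\norm{\grad^{j}K_1}_{L^{r'}}=t^{-\frac{1}{2r}-\frac{j}{4}}\,\norm{\grad^{j}K_1}_{L^{r'}},
\end{equation*}
using $\frac{1}{2r'}=\frac12-\frac{1}{2r}$, which is precisely the claimed power of $t$.

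It remains to transfer this to the torus. By the Poisson summation formula $G_t(x)=\sum_{\mathbf n\in\Z^2}K_t(x+\mathbf n)$, so the triangle inequality bounds $\norm{\grad^{j}G_t}_{L^{r'}(\T^2)}$ by $\sum_{\mathbf n\in\Z^2}\norm{\grad^{j}K_t}_{L^{r'}(Q_{\mathbf n})}$, where $\{Q_{\mathbf n}\}$ are the unit cells tiling $\R^2$. For the regime $0<t\le1$ that is relevant to the applications (consistent with the choice $0<T\le1$ used throughout), the rescaled cells $t^{-1/4}Q_{\mathbf n}$ have side length $t^{-1/4}\ge1$: the central cell reproduces, up to the factor $t^{-\frac1{2r}-\frac j4}$, the full quantity $\norm{\grad^{j}K_1}_{L^{r'}(\R^2)}$, while the super-polynomial decay of $\grad^{j}K_1$ makes the sum over $\mathbf n\neq0$ uniformly bounded. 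This yields $\norm{\grad^{j}G_t}_{L^{r'}(\T^2)}\le C_j t^{-\frac1{2r}-\frac j4}$. For $t\ge1$ the estimate is elementary and in fact easier, since the high Fourier modes are suppressed: for $j\ge1$ one bounds directly $\norm{\grad^{j}G_t}_{L^{r'}(\T^2)}\le\norm{\grad^{j}G_t}_{L^\infty}\le\sum_{\k\neq0}(2\pi\abs{\k})^{j}e^{-t\abs{\k}^4}\le C_j e^{-t}$, which is dominated by $t^{-\frac1{2r}-\frac j4}$.

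The only genuinely delicate point is the transfer from $\R^2$ to $\T^2$: one must verify that periodization does not destroy the scaling, and this is exactly where the rapid decay of the Schwartz kernel $K_1$ is used to control the sum over $\mathbf n\neq0$. The self-similar computation on $\R^2$ is a one-line substitution and supplies the exponent $-\frac{1}{2r}-\frac{j}{4}$; the remaining ingredients are Young's inequality and the finiteness of $\abs{\T^2}$.
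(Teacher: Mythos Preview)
Your proof is correct and takes a genuinely different route from the paper. The paper argues the two endpoints directly on the Fourier side: for $r=1$ it uses $|\hat f(\k)|\le\|f\|_{L^1}$ and bounds $\sum_{\k}|\k|^{j}e^{-t|\k|^4}$ by the corresponding integral over $\R^2$; for $r=\infty$ it factors out $\sup_{x}|x|^{j}e^{-t|x|^4/2}$ and controls the remaining sum; and then it invokes real interpolation for intermediate $r$. Your argument instead writes $e^{-t\FL}$ as convolution with the periodized biharmonic kernel, applies Young's inequality once, and reduces everything to $\|\nabla^{j}G_t\|_{L^{r'}(\T^2)}$, which you obtain from the exact scaling of the whole-space kernel together with Poisson summation. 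Your approach treats all $r$ in one stroke and makes transparent that the exponent $-\tfrac{1}{2r}-\tfrac{j}{4}$ is purely a consequence of the self-similarity $K_t(x)=t^{-1/2}K_1(t^{-1/4}x)$; the price is that the transfer from $\R^2$ to $\T^2$ via Poisson summation requires a small amount of bookkeeping with the Schwartz tails. The paper's endpoint-plus-interpolation argument is shorter but hides the scaling origin of the exponent inside the interpolation step. Both arguments share the same minor caveat: for $j=0$ and $r<\infty$ the stated bound cannot hold for large $t$ (take $f\equiv1$), and both proofs are really for $0<t\le1$, which is all that is needed in the applications; you noted this restriction explicitly, while the paper's proof implicitly relies on it when comparing the lattice sum to the integral.
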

\begin{proof}
For any $x\in \T^2$,
\begin{equation}
\label{e:Fourier identity}
    \left| \grad^{j}e^{-t\FL} f(x)\right|=\left| \mathcal F^{-1} \widehat{\left( \grad^{j}e^{-t\FL} f \right)}(x) \right|
    =\left|\sum_{\k\in\Z^2}\left|\k\right|^{j}e^{-t|\k|^4} \hat{f}(k)e^{2\pi i\k\cdot x}\right|
\end{equation}
Thus for $r=1$, we have
\begin{eqnarray*}
    \left| \grad^{j}e^{-t\FL} f(x)\right |\ 
    &\leq& \norm{f}_{L^1}\cdot \sum_{\k\in\Z^2} \left|\k\right|^j   
      e^{-t|\k|^4}\\
    &\leq& \norm{f}_{L^1}\cdot\int_{\R^2}\left|x\right|^{j} e^{-t|
      x|^4}dx\\
    &\leq& t^{-\tfrac{1}{2}-\tfrac{j}{4}}\norm{f}_{L^1}.
\end{eqnarray*}
On the other hand, for $r=\infty$, 
\begin{eqnarray*}
    \left| \grad^{j}e^{-t\FL} f(x)\right |\ 
    &\leq& \sup_{x\in\R^2}\left|x\right|^{j} e^{-\tfrac{t|x|^4}{2}}
    \left|\sum_{\k\in Z^2}\left|\hat{f}(\k)\right|e^{-\tfrac{t|\k|^4}{2}}
    \right|\\
    &\leq& C t^{-\tfrac{j}{4}}\norm{f}_{L^{\infty}}.
\end{eqnarray*}
Finally, for $1<r<\infty$, the desired estimate follows form real interpolation.
\end{proof}
\subsection{Mild solutions with $W^{1,\infty}$ initial data.}
\label{sec:H1 mild soln}
We now turn to the construction of the $L^2$ blow up example. 
In contrast to the mild solutions defined in Definition~\ref{def:mild and weak soln}, we begin with constructing  mild solutions induced by $W^{1,\infty}$ initial data. We first update the definition of mild solution to the equation \eqref{e:4 order eqn} with an $W^{1,\infty}$ initial data. 

\begin{definition} \label{def:mildinitialH2}
For $2<p<3$, a function $ u\in C((0,T]; W^{1,\infty}(\T^2)),0<T$ is called a \emph{mild solution} of~\eqref{e:4 order eqn} on $(0,T]$ with initial data $ u_0\in W^{1,\infty}(\T^2)$, if for any $0 < t \leq T$, 
\begin{equation}
\label{e:volterra integral}
     u(t)=\mathcal{T}(u)(t)\defeq e^{-t\FL} u_0-\int_0^t\, \grad e^{-(t-s)\FL}\Big(\abs{\grad u}^{p-2}\grad u\Big)\,ds
\end{equation}
holds pointwisely in time with values in $W^{1,\infty}$, where the integral is defined in the B\"{o}chner sense. 
\end{definition}

\begin{remark}
Note that we have removed the endpoint $0$ in the above definition for mild solutions. It turns out when the initial data $u_0 \in W^{1, \infty}(\T^2)$, this does not quite distinguish the mild solutions if we require $u \in C\left([0, T]; W^{1, \infty}(\T^2) \right)$ (see, Theorem \ref{20200705thm01}); however, it will play a major role if the initial data merely belongs to $L^2(\T^2)$ (see, Proposition \ref{thm:local exits with small L2}), which leads to the desired $L^2$ blow-up example (see, Theorem \ref{L2blowup-rate}). 
\end{remark}

We begin with the following local existence result. 

\begin{theorem} \label{20200705thm01}
Let $u_0 \in W^{1,\infty}(\T^2)$ and $2<p<3$. Then there exists $0< \breve{T} \leq 1$, depending only on $\|u_0\|_{W^{1,\infty}}$ and $p$ such that \eqref{e:4 order eqn} admits a mild solution $\breve{u}$ on $(0, \breve{T}]$ (in the sense of Definition~\ref{def:mildinitialH2}), which is unique in $C\left([0,\breve{T}]; W^{1,\infty}(\T^2)\right)$ and hence in $C\left((0,\breve{T}]; W^{1,\infty}(\T^2)\right)$.
\end{theorem}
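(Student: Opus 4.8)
The plan is to reproduce the fixed-point scheme used for Theorem~\ref{thm:local mild soln}, but now in an $L^\infty$-based space adapted to $W^{1,\infty}$ initial data. Concretely, I would work in the Banach space
\[
X_T \defeq C\left([0,T]; W^{1,\infty}(\T^2)\right), \qquad \norm{u}_{X_T}\defeq\sup_{0\le t\le T}\norm{u(t)}_{W^{1,\infty}},
\]
and show that the nonlinear map $\calT$ of~\eqref{e:volterra integral} is a contraction on a small closed ball $\mathbb{B}_R(0)\subset X_T$ once $\breve T$ is chosen small depending on $\norm{u_0}_{W^{1,\infty}}$ and $p$. The only new analytic input, compared with the $L^2$ argument, is that all semigroup bounds must now be taken in $L^\infty$; these are exactly supplied by Lemma~\ref{lem:L_infty estimate}.

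For the self-mapping estimate I would split $\norm{\calT(u)(t)}_{W^{1,\infty}}$ into the contributions of the function and its gradient. Since $\grad$ commutes with $e^{-t\FL}$, the linear term obeys $\norm{e^{-t\FL}u_0}_{W^{1,\infty}}\le C\norm{u_0}_{W^{1,\infty}}$ uniformly in $t$ (Lemma~\ref{lem:L_infty estimate} with $r=\infty$, $j=0$, applied to $u_0$ and to $\grad u_0$, so that no singular weight appears). For the nonlinear term, writing $g\defeq\abs{\grad u}^{p-2}\grad u$ so that $\norm{g}_{L^\infty}=\norm{\grad u}_{L^\infty}^{p-1}$, Lemma~\ref{lem:L_infty estimate} gives $\norm{\grad e^{-(t-s)\FL}g}_{L^\infty}\le C(t-s)^{-1/4}\norm{g}_{L^\infty}$ and $\norm{\grad^2 e^{-(t-s)\FL}g}_{L^\infty}\le C(t-s)^{-1/2}\norm{g}_{L^\infty}$; both singularities are integrable, and integrating in $s$ produces factors $T^{3/4}$ and $T^{1/2}$, hence
\[
\norm{\calT(u)}_{X_T}\le C\norm{u_0}_{W^{1,\infty}}+C\left(T^{3/4}+T^{1/2}\right)\norm{u}_{X_T}^{p-1}.
\]
The contraction estimate follows in the same way after invoking the elementary pointwise inequality $\bigl|\,|a|^{p-2}a-|b|^{p-2}b\,\bigr|\le C_p(|a|^{p-2}+|b|^{p-2})|a-b|$ already used in the proof of Lemma~\ref{lem:L-cts of operator}, giving
\[
\norm{\calT(u_1)-\calT(u_2)}_{X_T}\le C\left(T^{3/4}+T^{1/2}\right)\left(\norm{u_1}_{X_T}^{p-2}+\norm{u_2}_{X_T}^{p-2}\right)\norm{u_1-u_2}_{X_T}.
\]
Choosing $R\approx 2C\norm{u_0}_{W^{1,\infty}}$ and then $\breve T\le 1$ small enough that $C(T^{3/4}+T^{1/2})R^{p-1}\le R$ and $C(T^{3/4}+T^{1/2})R^{p-2}\le\tfrac12$, the Banach contraction mapping theorem produces a unique fixed point $\breve u\in\mathbb{B}_R(0)$, which is the desired mild solution.

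The step I expect to be the main obstacle is the behavior at the temporal endpoint $t=0$, which is precisely why Definition~\ref{def:mildinitialH2} is stated on the half-open interval $(0,\breve T]$. For $t>0$ the analyticity of $e^{-t\FL}$ makes $t\mapsto e^{-t\FL}u_0$ continuous into $W^{1,\infty}$, and the nonlinear integral is continuous with limit $0$ as $t\to0^+$, so membership in $C((0,\breve T];W^{1,\infty})$ is immediate. The delicate point is continuity up to $t=0$: because $e^{-t\FL}$ fails to be strongly continuous on $L^\infty$, setting up the contraction directly in $X_T=C([0,\breve T];W^{1,\infty})$ requires verifying $e^{-t\FL}u_0\to u_0$ in $W^{1,\infty}$, which I would obtain from the uniform $L^1$-bound on the biharmonic kernel via an approximate-identity argument. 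This places $\breve u$ in $C([0,\breve T];W^{1,\infty})$, and uniqueness in the full space $C([0,\breve T];W^{1,\infty})$ (not merely in the ball) follows by a standard continuation argument. Finally, since any solution in $C((0,\breve T];W^{1,\infty})$ inherits the same $t\to0$ limit $u_0$ from the Volterra identity, it in fact lies in $C([0,\breve T];W^{1,\infty})$, yielding uniqueness in the larger class as well.
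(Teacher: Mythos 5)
Your core argument is the paper's: the paper also runs the Banach fixed-point scheme for $\calT$ in $C([0,T];W^{1,\infty}(\T^2))$, using Lemma~\ref{lem:L_infty estimate} with $r=\infty$ and $j=1,2$ to get exactly the bounds $\norm{\grad e^{-(t-s)\FL}g}_{L^\infty}\le C(t-s)^{-1/4}\norm{g}_{L^\infty}$ and $\norm{\grad^2 e^{-(t-s)\FL}g}_{L^\infty}\le C(t-s)^{-1/2}\norm{g}_{L^\infty}$, hence the factors $T^{3/4}$ and $T^{1/2}$ in the self-mapping and Lipschitz estimates, and then the same choice of $\breve R\sim\norm{u_0}_{W^{1,\infty}}$ and $\breve T$. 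Up to that point your proposal matches the paper's (sketched) proof line by line.

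The one step that would fail is your resolution of the endpoint $t=0$. You claim $e^{-t\FL}u_0\to u_0$ in $W^{1,\infty}$ by an approximate-identity argument with the biharmonic kernel. This is false for general $u_0\in W^{1,\infty}(\T^2)$: the $L^\infty$ component converges because $u_0$ is Lipschitz, hence uniformly continuous, but $\grad e^{-t\FL}u_0=e^{-t\FL}\grad u_0$ need not converge to $\grad u_0$ in $L^\infty$, since $\grad u_0$ is merely a bounded vector field with no continuous representative in general, while $e^{-t\FL}\grad u_0$ is smooth for every $t>0$; a uniform limit of continuous functions cannot be (a.e.\ equal to) a genuinely discontinuous one. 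A concrete obstruction is $u_0(x)=\operatorname{dist}(x_1,\mathbb{Z})$, whose derivative jumps between $\pm1$. This is precisely why Definition~\ref{def:mildinitialH2} is posed on the half-open interval $(0,\breve T]$, as the remark following it stresses. The same issue contaminates your closing uniqueness step, where you argue that any solution in $C((0,\breve T];W^{1,\infty})$ ``inherits the limit $u_0$'' and hence lies in $C([0,\breve T];W^{1,\infty})$. The repair is routine and does not change any estimate: run the contraction in the Banach space of $W^{1,\infty}$-valued functions that are bounded on $[0,\breve T]$ and continuous on $(0,\breve T]$ with $u(0)=u_0$, equipped with the sup norm; no strong continuity of $e^{-t\FL}$ on $W^{1,\infty}$ at $t=0$ is needed, and uniqueness in $C([0,\breve T];W^{1,\infty})$ follows a fortiori since that is a subclass.
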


\begin{proof}
The proof of Theorem \ref{20200705thm01} is similar to the one of Theorem \ref{thm:local mild soln}. We only sketch the proof here and would like to leave the details to the interested reader. 

First of all, we show that for any $0<T \leqslant 1$, the map $\calT$ is bounded on $C\left([0, T]; W^{1,\infty}(\T^2)\right)$. More precisely, by using the semigroup estimate given in Lemma~\ref{lem:L_infty estimate} we have the following quantitative bound: there exists a constant $C_3>0$, such that for any $0<T \leq 1$ and $u \in C\left([0,T]; W^{1,\infty}(\T^2)\right)$, 
\begin{equation} \label{20210705eq01}
\left\|\calT(u) \right\|_{C\left([0, T]; W^{1,\infty}\right)} \le C_3 \left(\left\|u_0\right\|_{W^{1,\infty}}+T^{\tfrac{1}{2}}  \left\|u \right\|_{C\left([0, T]; W^{1,\infty}\right)}^{p-1} \right), 
\end{equation}
(this is the counterpart of Lemma \ref{lem:bdd of operator}) which follows from the following estimates:
\begin{enumerate}
    \item [(1).] $\sup\limits_{0<t \le T} \|\nabla \calT(u) \|_{L^{\infty}} \leq C_3 \left( \|u_0\|_{ W^{1,\infty}}+T^{\frac{1}{2}} \|u\|_{C\left([0, T]; W^{1,\infty}\right)}^{p-1} \right)$;

    \medskip
    
    \item [(2).] $\sup\limits_{0<t \le T} \|\calT(u) \|_{L^{\infty}} \leq C_3 \left( \|u_0\|_{ W^{1,\infty}}+T^{\frac{3}{4}} \|u\|_{C\left([0, T]; W^{1,\infty}\right)}^{p-1} \right)$; 
\end{enumerate}

\medskip

Next, we show that $\calT$ is a Lipschitz mapping on $C\left([0, T]; W^{1,\infty}(\T^2)\right)$ for any $0<T \le 1$, that is: there exists a constant $C_4>0$, such that for any $0<T \le 1$, 
%\begin{eqnarray*}
%    \left| \grad^{j}e^{-t\FL} f(x)\right |\ 
%    &\leq& \sup_{x\in\R^2}\left|x\right|^{j} e^{-\tfrac{t|x|^4}{2}}
%    \left|\sum_{k\in Z^2}\left|\hat{f}(k)\right|e^{-\tfrac{t|k|^4}{2}}
%    \right|\\
%    &\leq& C t^{-\tfrac{j}{4}}\norm{f}_{L^{\infty}}.
%\end{eqnarray*}
\begin{align}\label{20210705eq02}
&\left\|\calT(u_1)-\calT(u_2) \right\|_{C\left([0, T]; W^{1,\infty}\right)}  \\
&\leq C_4 T^{\frac{1}{2}} \left( \|u_1\|_{C\left([0, T]; W^{1,\infty}\right)}^{p-2}+\|u_2\|_{C\left([0, T]; W^{1,\infty}\right)}^{p-2} \right)  \left\|u_1-u_2 \right\|_{C\left([0, T]; W^{1,\infty}\right)} \notag, 
\end{align}
(this is the counterpart of Lemma \ref{lem:L-cts of operator}), which, similarly, can be achieved by showing the following estimates
\begin{enumerate}
     \item [(3).] $\sup\limits_{0<t \le T} 
     \left\|\nabla \calT(u_1)-\nabla \calT(u_2) \right\|_{L^{\infty}} \\
     \leq C_4 T^{\frac{1}{2}} \left( \|u_1\|_{C\left([0, T]; W^{1,\infty}\right)}^{p-2}+\|u_2\|_{C\left([0, T]; W^{1,\infty}\right)}^{p-2} \right)  \left\|u_1-u_2 \right\|_{C\left([0, T]; W^{1,\infty}\right)} $; 
     
     \medskip
     
      \item [(4).] $\sup\limits_{0<t \le T} 
     \left\|\calT(u_1)-\calT(u_2) \right\|_{L^{\infty}} \\
     \leq C_4 T^{\frac{3}{4}} \left( \|u_1\|_{C\left([0, T]; W^{1,\infty}\right)}^{p-2}+\|u_2\|_{C\left([0, T]; W^{1,\infty}\right)}^{p-2} \right)  \left\|u_1-u_2 \right\|_{C\left([0, T]; W^{1,\infty}\right)} $.
\end{enumerate}

The final step is to apply the Banach contraction mapping theorem in a ball $\mathbb B_{\breve{R}}(0)$ in $C\left([0, T]; W^{1,\infty}\right)$, where, we can take $\breve{R} \geq 2C_0'\|u_0\|_{W^{1,\infty}}$ with $C_0'=\max\{1, C_3, C_4\}$ and $\breve{T} \leq \min \left\{1, \left(2C_0'\breve{R}^{p-2} \right)^{-2} \right\}$.
\end{proof}
\begin{remark}\label{rmk:initial estimate infty}
One may notice that in addition to the existence result, we also have the following estimate:
\begin{equation*}
\|u\|_{C([0,T],W^{1,\infty})}\leqslant 2C_0'\|u_0\|_{W^{1,\infty}},
\end{equation*}
whenever $T\in [0,\breve{T}]$.
\end{remark}
\begin{corollary}
\label{cor:H2 argument}
With the same assumptions in Theorem~\ref{20200705thm01}, if $\breve{T}^*$ is the maximal time of existence of the mild solution $\breve{u}$ (in the sense of Definition~\ref{def:mildinitialH2}), then 
\begin{equation*}
    \limsup_{t\rightarrow \breve{T}_-^*}\norm{\breve{u}(t)}_{W^{1,\infty}(\T^2)}=\infty.
\end{equation*}
Otherwise, $\breve{T}^*=\infty$.
\end{corollary}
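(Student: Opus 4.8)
The plan is to reproduce, in the $W^{1,\infty}$ setting, the contradiction argument that established Corollary~\ref{cor:L2 argument}, with Theorem~\ref{20200705thm01} playing the role of Theorem~\ref{thm:local mild soln}. Suppose, for contradiction, that $\breve{T}^*<\infty$ while $\limsup_{t\to\breve{T}^*_-}\norm{\breve{u}(t)}_{W^{1,\infty}}=\alpha<\infty$. Since $\breve{u}\in C((0,\breve{T}^*);W^{1,\infty}(\T^2))$, the map $t\mapsto\norm{\breve{u}(t)}_{W^{1,\infty}}$ is continuous on $(0,\breve{T}^*)$, so by definition of the $\limsup$ there exist $t^*\in(0,\breve{T}^*)$ and a finite bound $M$ (for instance $M=\alpha+1$) with $\norm{\breve{u}(t)}_{W^{1,\infty}}\le M$ for every $t\in(t^*,\breve{T}^*)$.

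The crucial ingredient is that the existence time furnished by Theorem~\ref{20200705thm01} is uniform over initial data of bounded $W^{1,\infty}$ norm. Indeed, inspecting the proof of that theorem, one may take $\breve{R}=2C_0'\norm{u_0}_{W^{1,\infty}}$ and $\breve{T}\le\min\{1,(2C_0'\breve{R}^{p-2})^{-2}\}$, which is nonincreasing in $\norm{u_0}_{W^{1,\infty}}$. Hence there is a fixed $\delta=\delta(M,p)>0$ such that any initial datum in $W^{1,\infty}(\T^2)$ of norm at most $M$ generates a mild solution (in the sense of Definition~\ref{def:mildinitialH2}) on a time interval of length at least $\delta$, with the a priori control of Remark~\ref{rmk:initial estimate infty} available on that interval.

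I would then fix $t_0\in(t^*,\breve{T}^*)$ close enough to $\breve{T}^*$ that $t_0+\delta\ge\breve{T}^*$. Because $t_0>0$, the value $\breve{u}(t_0)\in W^{1,\infty}(\T^2)$ is well defined and satisfies $\norm{\breve{u}(t_0)}_{W^{1,\infty}}\le M$, so Theorem~\ref{20200705thm01} yields a mild solution $\bar{u}\in C([t_0,t_0+\delta];W^{1,\infty})$ with $\bar{u}(t_0)=\breve{u}(t_0)$. On the overlap $[t_0,\breve{T}^*)$, the restriction of $\breve{u}$ also solves the Volterra identity~\eqref{e:volterra integral} with base point $t_0$ and the same datum $\breve{u}(t_0)$; by the uniqueness clause of Theorem~\ref{20200705thm01} (uniqueness in $C([0,\breve{T}];W^{1,\infty})$, hence in $C((0,\breve{T}];W^{1,\infty})$) we conclude $\bar{u}=\breve{u}$ there. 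Thus $\bar{u}$ extends $\breve{u}$ to the strictly larger interval $(0,t_0+\delta]\supsetneq(0,\breve{T}^*)$, contradicting the maximality of $\breve{T}^*$; the stated dichotomy follows.

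The step meriting the most care is not the contradiction but the verification that $\breve{u}$, which a priori solves~\eqref{e:volterra integral} with base point $0$, genuinely satisfies the same identity with base point $t_0$ and datum $\breve{u}(t_0)$, so that $\bar{u}$ and $\breve{u}$ are being compared as solutions of the \emph{same} fixed-point equation. This restart identity is exactly the semigroup reassembly carried out in the proof of Proposition~\ref{prop:mild equi with weak} (cf.\ the passage to~\eqref{e:mild with epsilon}), obtained from $e^{-(t-s)\FL}=e^{-(t-t_0)\FL}e^{-(t_0-s)\FL}$ and additivity of the B\"ochner integral over $[0,t_0]\cup[t_0,t]$. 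Since every norm in play is the $W^{1,\infty}$ norm and $t_0$ is bounded away from $0$, no endpoint integrability issue arises, so the gluing is routine once the uniform existence time $\delta$ has been secured.
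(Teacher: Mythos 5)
Your argument is correct and is precisely the ``easy modification of Corollary~\ref{cor:L2 argument}'' that the paper itself invokes for this statement: a finite $\limsup$ gives a uniform $W^{1,\infty}$ bound near $\breve{T}^*$, the existence time in Theorem~\ref{20200705thm01} depends only on that bound, and restarting at a nearby $t_0$ plus uniqueness extends the solution past $\breve{T}^*$. Your extra care with the restart (base-point-$t_0$) Volterra identity is a welcome elaboration of a step the paper leaves implicit, but it does not change the route.
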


We omit the detail of the proof of Corollary~\ref{cor:H2 argument} since it follows from an easy modification of Corollary~\ref{cor:L2 argument}. More precisely, one can easily show that if $\limsup\limits_{t\rightarrow T_-^*}\norm{\breve{u}(t)}_{W^{1,\infty}(\T^2)}<\infty$, then the corresponding mild solution can be extended by surpassing $\breve{T}^*$, which contradicts to the assumption that  $\breve{T}^*$ is the maximal time of existence of the mild solution.

\subsection{Main result.} 
\label{sec:blowup with H2 data}
In the third part of this section, we first show that the mild solutions constructed in Section~\ref{sec:H1 mild soln} possess better regularity if the initial data further belongs to $H^2\cap W^{1,\infty}$. Then our main theorem shows that there exists non-trivial initial data $u_0\in H^2\cap W^{1,\infty}$ such that the corresponding mild solution must blow up in a finite time. This extra assumption that we need is that the energy of $u_0$ is negative, which we recall is given by
$$
E(u_0)=\frac{1}{2} \left\|\lap u_0 \right\|_{L^2}^2-\frac{1}{p} \left\|\nabla u_0\right\|_{L^p}^p. 
$$

We begin by showing that the mild solutions constructed in Section~\ref{sec:H1 mild soln} own better regularity if $u_0\in H^2$ is provided.
\begin{proposition}
\label{prop:regularity improve}
Let $T>0$, and $u$ is a mild solution with respect to Definition~\ref{def:mildinitialH2} on $(0,T]$. If further $u_0\in H^2$, then in addition to Theorem~\ref{20200705thm01}, the mild solution has a better regularity:
\begin{equation}
    u\in L^{\infty}([0, T];H^2),
\end{equation}
\end{proposition}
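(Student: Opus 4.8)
The plan is to bootstrap from the regularity already in hand. By Theorem~\ref{20200705thm01} and Remark~\ref{rmk:initial estimate infty}, the mild solution satisfies $u\in C([0,T];W^{1,\infty}(\T^2))$ with $\|u\|_{C([0,T];W^{1,\infty})}\le 2C_0'\|u_0\|_{W^{1,\infty}}$. In particular $\grad u(s)$ is uniformly bounded in $L^\infty$, so the nonlinearity $g(s)\defeq|\grad u(s)|^{p-2}\grad u(s)$ satisfies
$$
\sup_{s\in[0,T]}\|g(s)\|_{L^2}\le \sup_{s\in[0,T]}\|g(s)\|_{L^\infty}\le \big(2C_0'\|u_0\|_{W^{1,\infty}}\big)^{p-1}=:M<\infty.
$$
Thus $g\in L^\infty([0,T];L^2(\T^2))$, and it is this bound, rather than any differentiability of $g$, that I would feed into the Duhamel representation; since $g$ is only $L^\infty$ and carries no a priori higher regularity, every spatial derivative in the estimate must instead be absorbed by the hyperdiffusion semigroup.

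Next I would estimate the $H^2$ norm of each term in
$$
u(t)=e^{-t\FL}u_0-\int_0^t \grad e^{-(t-s)\FL}g(s)\,ds.
$$
For the linear term, since $e^{-t\FL}$ is the Fourier multiplier with symbol $e^{-t|\k|^4}\le 1$, it is a contraction on $H^2$, whence $\|e^{-t\FL}u_0\|_{H^2}\le\|u_0\|_{H^2}$; this is the only place where the hypothesis $u_0\in H^2$ is used. For the nonlinear term I would invoke the smoothing estimate of Lemma~\ref{lem: Hs of semigroup}: because all three spatial derivatives (one from $\grad$ and two from the $H^2$ norm) land on the semigroup, applying that lemma with $s=3$ together with the equivalence $H^3\simeq\dot{H}^{3}\cap L^2$ on the torus gives
$$
\big\|\grad e^{-(t-s)\FL}g(s)\big\|_{H^2}\le C\big((t-s)^{-3/4}+1\big)\|g(s)\|_{L^2}.
$$

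Finally, combining these bounds and using $\int_0^t (t-s)^{-3/4}\,ds=4t^{1/4}$, I obtain
$$
\|u(t)\|_{H^2}\le\|u_0\|_{H^2}+CM\int_0^t\big((t-s)^{-3/4}+1\big)\,ds\le\|u_0\|_{H^2}+CM\big(T^{1/4}+T\big),
$$
which is finite and independent of $t\in[0,T]$, yielding $u\in L^\infty([0,T];H^2)$. The one point requiring genuine care — and the only place the argument could conceivably break — is the integrability of the time singularity generated by the semigroup: absorbing three derivatives into the fourth-order operator produces the exponent $3/4$, and the Duhamel integral converges precisely because $3/4<1$. The scheme works because the nonlinearity carries just one derivative beyond $\grad u$, keeping the total derivative count below four, so no restriction on $p$ is needed beyond the standing assumption $2<p<3$.
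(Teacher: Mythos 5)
Your argument is correct and follows essentially the same strategy as the paper's proof: estimate the $H^2$ norm through the Duhamel formula, observe that $e^{-t\FL}$ does not increase the $H^2$ norm of $u_0$ (this is where $u_0\in H^2$ enters), and absorb the three spatial derivatives landing on the nonlinear term into the semigroup via Lemma~\ref{lem: Hs of semigroup} with $s=3$, using the uniform $W^{1,\infty}$ control of $u$ from Theorem~\ref{20200705thm01} to bound the integrand. The one genuine difference is where you place the nonlinearity on the Lebesgue scale: you bound $\norm{\abs{\grad u}^{p-2}\grad u}_{L^2}$ by $\norm{\grad u}_{L^\infty}^{p-1}$ and obtain the time singularity $(t-s)^{-3/4}$, whereas the paper measures the nonlinearity in $L^{2/(p-1)}$ as $\norm{\grad u}_{L^2}^{p-1}$ and additionally invokes the $L^{2/(p-1)}\to L^2$ smoothing of Lemma~\ref{lem: semi group on divergence}, which yields the stronger singularity $(t-s)^{-(p+1)/4}$. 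Both integrals converge, but your variant is marginally more robust in that its integrability does not hinge on $p<3$, while the paper's version would degenerate as $p\to 3$; in exchange, the paper's route needs only the $L^2$ (rather than $L^\infty$) norm of $\grad u$ inside the integral, which matters in other parts of the paper but not here. No gaps.
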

\begin{proof}
For any $0< t\leqslant T$ we have
\begin{eqnarray*}
&&\norm{\lap u(t)}_{L^2} = \norm{\lap \calT u(t)}_{L^2}\\
&& \quad \quad \quad \leq \left\|e^{-t\FL}\lap u_0 \right\|_{L^2}+\int_0^t\left\|\left(-\lap\right)^{\tfrac{3}{2}}e^{-(t-s)\FL}\left(\vert\grad u\vert^{p-2}\grad u\right)\right\|_{L^2}ds \\
&& \quad \quad \quad \leq \norm{u_0}_{H^2}+\int_0^t\left(t-s\right)^{-\tfrac{p+1}{4}}\norm{\grad u}_{L^2}^{p-1}ds\\
&& \quad \quad \quad \leq \norm{u_0}_{H^2}+ T^{\tfrac{3-p}{4}}\norm{u}_{C\left([0, T]; W^{1,\infty}\right)}^{p-1}.
\end{eqnarray*} 
\end{proof}
\begin{remark}\label{rmk:T initial control}
This calculation together with Theorem~\ref{20200705thm01} and Remark~\ref{rmk:initial estimate infty} gives 
\begin{equation}
\label{e:unif H2 bd}
    \norm{u}_{L^{\infty}([0, \breve{T}];H^2)}
    \leqslant \norm{u_0}_{H^2}+ C\breve{T}^{\tfrac{3-p}{4}}\norm{u_0}_{ W^{1,\infty}}^{p-1}.
\end{equation}
Here $\breve{T}$ is the same one defined on Theorem~\ref{20200705thm01}.
\end{remark}

We are ready to state the main result in this section. 
\begin{theorem}
\label{thm:blow up}
Let $2<p<3$ and $u_0\in H^2(\T^2)\cap W^{1,\infty}(\T^2)$ with $E(u_0)<0$, then the corresponding mild solution to~\eqref{e:4 order eqn} (namely, the mild solution defined in Definition~\ref{def:mildinitialH2}) must blow up in finite time.
\end{theorem}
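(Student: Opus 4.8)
The plan is to argue by contradiction using the energy functional $E$ together with the energy identity from Proposition~\ref{prop:adv mild equi with weak} (specialized to $v=0$, i.e.\ the identity~\eqref{e:energy identity}). The key mechanism is that $E$ is a Lyapunov functional for the $L^2$-gradient flow~\eqref{e:4 order eqn}, and that negativity of $E$ forces the $L^2$ norm to grow faster than any global-in-time solution could tolerate. First I would establish that along a solution the energy is nonincreasing: formally,
\begin{equation*}
\frac{d}{dt}E(u(t)) = -\norm{\partial_t u(t)}_{L^2}^2 \le 0,
\end{equation*}
so that $E(u(t)) \le E(u_0) < 0$ for all $t$ in the interval of existence. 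To make this rigorous one uses the regularity $u \in L^\infty([0,T];H^2)$ from Proposition~\ref{prop:regularity improve} together with $\grad u \in L^\infty$, which justifies differentiating $E(u(t))$ and testing the equation against $\partial_t u$.

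\textbf{The differential inequality for the $L^2$ norm.}
Next I would set $M(t) \defeq \norm{u(t)}_{L^2}^2$ and compute, using the equation and integration by parts,
\begin{equation*}
\frac{1}{2}\frac{d}{dt}M(t) = -\norm{\lap u}_{L^2}^2 + \norm{\grad u}_{L^p}^p = -2E(u(t)) + \left(\tfrac{2}{p}-1\right)\norm{\grad u}_{L^p}^p,
\end{equation*}
where I have rewritten $-\norm{\lap u}_2^2+\norm{\grad u}_p^p$ in terms of $E(u)$ and a remaining positive multiple of $\norm{\grad u}_p^p$ (legitimate since $p>2$ makes $2/p-1<0$, so one must instead write $\norm{\lap u}_2^2 = 2E(u)+\tfrac{2}{p}\norm{\grad u}_p^p$ and substitute to isolate $-2E(u)$ plus a positive coefficient times $\norm{\grad u}_p^p$). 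Since $E(u(t))\le E(u_0)<0$, the first term gives a strictly positive lower bound $-2E(u_0)>0$, hence $M$ is strictly increasing. To get finite-time blow-up rather than mere monotone growth, I would bootstrap: one closes a differential inequality of the form $M'(t) \ge c\, M(t)^{\gamma}$ with $\gamma>1$, obtained by interpolating $\norm{\grad u}_p^p$ from below against $M$ and $E(u_0)$. Concretely, the standard Levine-type trick bounds $\norm{\grad u}_p^p$ below in terms of $E(u_0)$ and $\norm{\lap u}_2^2$, and then controls $\norm{\lap u}_2^2$ from below by $(M')$-type quantities via Cauchy--Schwarz, yielding a superlinear ODE whose solution escapes to $+\infty$ in finite time.

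\textbf{Concluding blow-up.}
Finally, the concavity/convexity argument shows $M(t)\to\infty$ as $t\to T^{**}$ for some finite $T^{**}$. Combined with Corollary~\ref{cor:H2 argument}, which says the $W^{1,\infty}$ norm must diverge at the maximal existence time $\breve{T}^*$, this forces $\breve{T}^* \le T^{**} < \infty$: if the solution existed globally, $M(t)$ would stay finite for all $t$, contradicting finite-time escape. Thus the mild solution blows up in finite time.

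\textbf{Main obstacle.}
I expect the principal difficulty to be rigorously justifying the energy-dissipation identity $\tfrac{d}{dt}E(u(t))=-\norm{\partial_t u}_{L^2}^2$ and the computation of $M'(t)$ at the level of regularity available here: the mild solution is only known to lie in $C((0,\breve{T}];W^{1,\infty})\cap L^\infty([0,\breve{T}];H^2)$, which is not obviously enough to test against $\partial_t u$ or to treat $E(u(t))$ as classically differentiable. One must either exploit the analytic smoothing of the semigroup $e^{-t\FL}$ to upgrade the solution to be smooth for $t>0$, or run the argument on a regularized/mollified version and pass to the limit, as was done with the $\epsilon$-truncation in Proposition~\ref{prop:mild equi with weak}. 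Getting the superlinear closing exponent $\gamma>1$ sharp enough to force genuine finite-time blow-up (rather than infinite-time growth) via the Gagliardo--Nirenberg interpolation between $\norm{\grad u}_p$, $\norm{\lap u}_2$ and $\norm{u}_2$ in the regime $2<p<3$ on $\T^2$ is the secondary technical hurdle.
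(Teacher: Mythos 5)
Your proposal is correct and follows essentially the same route as the paper: a Levine-type concavity/energy argument in which $E(u(t))\le E(u_0)<0$ and the Cauchy--Schwarz step $\norm{u}_{L^2}^2\norm{\partial_t u}_{L^2}^2\ge\tfrac14\bigl(\tfrac{d}{dt}\norm{u}_{L^2}^2\bigr)^2$ yield the monotonicity of $E(u(t))/\norm{u(t)}_{L^2}^p$ and hence the superlinear inequality $\tfrac{d}{dt}\norm{u}_{L^2}^2\ge -2pE(u_0)\norm{u}_{L^2}^p/\norm{u_0}_{L^2}^p$, which the paper integrates in the equivalent form $\norm{u(t)}_{L^2}^{-(p-2)}\le\norm{u_0}_{L^2}^{-(p-2)}+p(p-2)E(u_0)t/\norm{u_0}_{L^2}^p$ to force blow-up by time $-\norm{u_0}_{L^2}^2/\bigl(p(p-2)E(u_0)\bigr)$. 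The technical obstacle you flag is exactly where the paper's work lies, and its resolution is the regularization $(\abs{\grad u}^2+\rho)^{(p-2)/2}\grad u$ of the non-smooth nonlinearity (together with smooth approximation of $u_0$), rather than the time-truncation of Proposition~\ref{prop:mild equi with weak}.
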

To prove this theorem, we need to consider the following approximated problem to~\eqref{e:4 order eqn}: for $\rho\in(0,1)$ and $2<p<3$, 
\begin{equation}
\label{e:approx 4 order eqn}
\begin{cases}
    \partial_t u^{\rho} + (-\lap)^2 u^{\rho} = -\grad\cdot\big((\abs{\grad u^{\rho}}^{2}+\rho)^{\frac{p-2}{2}}\grad u^{\rho}\big); \\
    \\
   u^{\rho}(0,x)= u_0(x) \in H^2(\T^2)\cap W^{1,\infty}, 
\end{cases}
\end{equation}
which can be regarded as the $L^2$-gradient flow of the energy $E_\rho$:
\begin{equation}
\label{e:approx energy}
    E_{\rho}(\phi)\defeq
    \frac{1}{2}\int_{\T^2}\,(\lap\phi)^2\,dx - \frac{1}{p}\int_{\T^2}\,\left((\abs{\grad\phi}^2+\rho)^{\frac{p}{2}}-\rho^{\frac{p}{2}}\right)\,dx.
\end{equation}
 We formulate a definition of the mild solution $u^{\rho}$ to problem~\eqref{e:approx 4 order eqn} in the same manner as in Definition~\ref{def:mildinitialH2}. The existence and uniqueness of the solution is established in the following theorem.
\begin{theorem}
\label{thm:local mild soln for approx eqn}
Let $2<p<3$ and $u_0\in H^2(\T^2)\cap W^{1,\infty}(\T^2)$ with $\norm{u_0}_{H^2}+\norm{u_0}_{W^{1,\infty}}<D$. Then there exists $0< T\leq 1$ depending on $D$ such that for any $0<\rho<1$ equation~\eqref{e:approx 4 order eqn} admits a unique mild solution $u^{\rho}$ on $[0,T]$ with the representation
$$
     u^{\rho}(t)= e^{-t\FL}u_0\,-\,\int_0^t\, \nabla e^{-(t-s)\FL}\,\left((\abs{\grad u^{\rho}}^{2}+\rho)^{\frac{p-2}{2}}\grad u^{\rho}\right)\,ds,\qquad 0\leq t\leq T.
$$
In particular, 
\begin{equation} \label{20210708eq01}
    \sup_{0<\rho<1}\norm{u^{\rho}}_{L^{\infty}([0,T];H^2)}\leq L_1 (D+D^{p-1}),
\end{equation} 
for some $L_1>0$ that is independent on $D$.
\end{theorem}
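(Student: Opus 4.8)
The plan is to follow the proof of Theorem~\ref{20200705thm01} essentially verbatim, running the Banach contraction mapping theorem for the map
\[
\mathcal{T}_\rho(u)(t)\defeq e^{-t\FL}u_0-\int_0^t \grad e^{-(t-s)\FL}\,F_\rho(\grad u)(s)\,ds,\qquad F_\rho(\xi)\defeq(\abs{\xi}^2+\rho)^{\frac{p-2}{2}}\xi,
\]
on a ball in $C([0,T];W^{1,\infty}(\T^2))$. The only genuinely new ingredient is that every nonlinear estimate must be made \emph{uniform in} $\rho\in(0,1)$, so that both the existence time $T$ and the final $H^2$ bound depend on $D$ but not on $\rho$.

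The key uniform pointwise bounds are
\[
\abs{F_\rho(\xi)}\le C\bigl(\abs{\xi}+\abs{\xi}^{p-1}\bigr),\qquad \abs{DF_\rho(\xi)}\le C\bigl(1+\abs{\xi}^{p-2}\bigr),
\]
valid for all $\xi\in\R^2$ and all $\rho\in(0,1)$ with $C=C(p)$, where $DF_\rho$ denotes the Jacobian of $F_\rho$. Both follow from the two-sided estimate $\abs{\xi}^{p-2}\le(\abs{\xi}^2+\rho)^{\frac{p-2}{2}}\le(\abs{\xi}^2+1)^{\frac{p-2}{2}}\le C(1+\abs{\xi}^{p-2})$, which holds since $p>2$ and $0<\rho<1$; note that $F_\rho$ is smooth, so the mean value theorem applied to $DF_\rho$ replaces the elementary inequality for $\abs{a}^{p-2}a-\abs{b}^{p-2}b$ used in Lemma~\ref{lem:L-cts of operator}. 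Feeding these bounds into Lemma~\ref{lem:L_infty estimate} (with $j=1,2$ and $r=\infty$), exactly as in items (1)–(4) of the proof of Theorem~\ref{20200705thm01}, yields a boundedness estimate of the form~\eqref{20210705eq01} and a contraction estimate of the form~\eqref{20210705eq02} with constants $C_3,C_4$ independent of $\rho$; the extra linear term $\abs{\xi}$ in $\abs{F_\rho(\xi)}$ produces only lower-order contributions, harmless because $T\le 1$. Choosing $\breve{R}\ge 2C_0'D$ and $T=T(D)\le 1$ small enough (uniformly in $\rho$) closes the fixed-point argument, giving a unique mild solution $u^\rho$ on $[0,T]$ together with the a~priori bound $\norm{u^\rho}_{C([0,T];W^{1,\infty})}\le 2C_0'D$ as in Remark~\ref{rmk:initial estimate infty}.

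For the uniform $H^2$ bound~\eqref{20210708eq01} I would repeat the computation of Proposition~\ref{prop:regularity improve}. Applying $\lap$ to the Duhamel representation and splitting the semigroup, one obtains
\[
\norm{\lap u^\rho(t)}_{L^2}\le\norm{u_0}_{H^2}+\int_0^t\norm{(-\lap)^{3/2}e^{-\frac{(t-s)\FL}{2}}}_{L^2\to L^2}\,\norm{e^{-\frac{(t-s)\FL}{2}}F_\rho(\grad u^\rho)}_{L^2}\,ds,
\]
and using Lemma~\ref{lem: Hs of semigroup} (with $s=3$) together with Lemma~\ref{lem: semi group on divergence} produces the kernel $(t-s)^{-\frac{p+1}{4}}$, integrable precisely because $p<3$. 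The nonlinear factor is controlled by the uniform growth bound and the inclusion $L^\infty(\T^2)\hookrightarrow L^{2/(p-1)}(\T^2)$ (finite measure): since $\norm{\grad u^\rho}_{L^\infty}\le 2C_0'D$, we get $\norm{F_\rho(\grad u^\rho)}_{L^{2/(p-1)}}\le C(D+D^{p-1})$ uniformly in $\rho$. Integrating yields $\sup_{0<\rho<1}\norm{\lap u^\rho}_{L^\infty([0,T];L^2)}\le\norm{u_0}_{H^2}+CT^{\frac{3-p}{4}}(D+D^{p-1})$, and combining with $\norm{u^\rho}_{L^\infty([0,T];L^2)}\le\norm{u^\rho}_{C([0,T];W^{1,\infty})}\le 2C_0'D$ and the norm equivalence $\norm{\cdot}_{H^2}\sim\norm{\cdot}_{L^2}+\norm{\lap\cdot}_{L^2}$ gives~\eqref{20210708eq01} with $L_1$ depending only on $p$, hence independent of both $D$ and $\rho$.

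The main obstacle, and the only substantive difference from Theorem~\ref{20200705thm01} and Proposition~\ref{prop:regularity improve}, is tracking this $\rho$-uniformity. Everything hinges on the upper bound $(\abs{\xi}^2+\rho)^{\frac{p-2}{2}}\le(\abs{\xi}^2+1)^{\frac{p-2}{2}}$, which is exactly what prevents the nonlinear constants from degenerating as $\rho\to 0$. Once this observation is in place, the smoothness of $F_\rho$ in fact makes the contraction step cleaner than in the unregularized case, and no new analytic difficulty arises.
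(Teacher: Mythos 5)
Your proposal is correct and follows essentially the same route as the paper, which itself states only that the result ``follows from a proper modification of Theorem~\ref{20200705thm01}, Proposition~\ref{prop:regularity improve} and Remark~\ref{rmk:T initial control}'' — namely a contraction argument in $C([0,T];W^{1,\infty})$ followed by the $H^2$ Duhamel estimate with kernel $(t-s)^{-\frac{p+1}{4}}$. Your explicit identification of the $\rho$-uniform bounds $\abs{F_\rho(\xi)}\le C(\abs{\xi}+\abs{\xi}^{p-1})$ and $\abs{DF_\rho(\xi)}\le C(1+\abs{\xi}^{p-2})$ supplies exactly the detail the paper omits, and the resulting constants and time of existence are, as required, independent of $\rho$.
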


\begin{proof}
The proof of Theorem~\ref{thm:local mild soln for approx eqn} follows from a proper modification of Theorem \ref{20200705thm01}, Proposition~\ref{prop:regularity improve} and Remark~\ref{rmk:T initial control}, and hence we omit it here. Nevertheless, we would like to make a remark that the upper bound in the estimate \eqref{20210708eq01} follows from the choice of $\breve{T}$ in the Remark~\ref{rmk:T initial control}. 
\end{proof}

We now divide the proof of Theorem~\ref{thm:blow up} into following lemmas. We start with arguing that there exists a short time interval that $u^{\rho}$ converges to $u$ in $L^{\infty}([0,T];H^2(\T^2))$ as $\rho\rightarrow 0$.

\begin{lemma}
\label{lem:ST convergence}
Let $u$ and $u^{\rho}$ be the corresponding mild solutions with $u_0\in H^2(\T^2)\cap W^{1,\infty}(\T^2)$ of ~\eqref{e:4 order eqn} and ~\eqref{e:approx 4 order eqn}, respectively. If there is a $T_1>0$ such that 
\begin{equation*} 
    \max\left\{\norm{u}_{L^{\infty}\left([0,T_1];H^2\right)},\sup_{0<\rho<1}\norm{u^{\rho}}_{L^{\infty}\left([0,T_1];H^2\right)}\right\}<D
\end{equation*}
for some $D>0$, then there exists $T_2<T_1$, which only depends on $D$ and $T_1$, such that
$\norm{u-u^{\rho}}_{L^{\infty}([0,T_2];H^2)}\rightarrow 0$ as $\rho\rightarrow 0$.
\end{lemma}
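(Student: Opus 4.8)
The plan is to subtract the two Duhamel representations for $u$ and $u^\rho$, observe that the linear parts $e^{-t\FL}u_0$ cancel, and close a self-improving estimate for $w\defeq u-u^\rho$ in $H^2$ on a short interval. Writing $F(\xi)=\abs{\xi}^{p-2}\xi$ and $F_\rho(\xi)=(\abs{\xi}^2+\rho)^{(p-2)/2}\xi$, the mild-solution formulas of Theorem~\ref{20200705thm01} and Theorem~\ref{thm:local mild soln for approx eqn} give
\[
w(t)=-\int_0^t\grad e^{-(t-s)\FL}\big[F(\grad u(s))-F_\rho(\grad u^\rho(s))\big]\,ds.
\]
First I would split the integrand as $g=g_1+g_2$, where $g_1=F(\grad u)-F(\grad u^\rho)$ is the genuine difference of the two solutions and $g_2=F(\grad u^\rho)-F_\rho(\grad u^\rho)$ is the approximation error.

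For $g_2$ the decisive elementary fact is that $t\mapsto t^{(p-2)/2}$ is subadditive on $[0,\infty)$, since $0<(p-2)/2<\tfrac12$; hence $\abs{(\abs{\xi}^2+\rho)^{(p-2)/2}-\abs{\xi}^{p-2}}\le\rho^{(p-2)/2}$, so that $\abs{g_2}\le\rho^{(p-2)/2}\abs{\grad u^\rho}$ and $\norm{g_2}_{L^2}\le\rho^{(p-2)/2}\norm{\grad u^\rho}_{L^2}\le C\,\rho^{(p-2)/2}$. For $g_1$ I would use the inequality $\abs{F(a)-F(b)}\le C_p(\abs{a}^{p-2}+\abs{b}^{p-2})\abs{a-b}$ already employed in Lemma~\ref{lem:L-cts of operator}, together with Hölder and the two-dimensional Sobolev embedding $H^2(\T^2)\hookrightarrow W^{1,q}(\T^2)$ valid for every finite $q$: choosing $q=4$ yields
\[
\norm{g_1}_{L^2}\le C_p\,\big\|\abs{\grad u}^{p-2}+\abs{\grad u^\rho}^{p-2}\big\|_{L^4}\norm{\grad w}_{L^4}\le C(D,p)\norm{w}_{H^2},
\]
where $2<p<3$ guarantees that $\abs{\grad u}^{p-2}$ is a subcritical power controlled by $\norm{\grad u}_{L^{4(p-2)}}^{p-2}$ and hence by $\norm{u}_{H^2}\le D$. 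Crucially, both estimates use only the $H^2$ bound $D$, so all constants depend only on $D$ and $p$.

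Next I would feed these bounds into the semigroup smoothing estimates of Lemma~\ref{lem: Hs of semigroup}. Since $\norm{\grad e^{-\tau\FL}g}_{L^2}\le C\tau^{-1/4}\norm{g}_{L^2}$ (take $s=1$) and $\norm{\lap\grad e^{-\tau\FL}g}_{L^2}\le C\tau^{-3/4}\norm{g}_{L^2}$ (take $s=3$), and since $\norm{w}_{H^2}$ is comparable to $\norm{w}_{L^2}+\norm{\lap w}_{L^2}$, combining with the bounds on $g_1,g_2$ gives, for $0\le t\le T_2\le 1$ (dominating the milder kernel $(t-s)^{-1/4}$ by $(t-s)^{-3/4}$),
\[
\norm{w(t)}_{H^2}\le C\int_0^t(t-s)^{-3/4}\Big(\norm{w(s)}_{H^2}+\rho^{(p-2)/2}\Big)\,ds.
\]
Because $\int_0^t(t-s)^{-3/4}\,ds=4t^{1/4}$, writing $\Psi(T_2)\defeq\sup_{t\le T_2}\norm{w(t)}_{H^2}$ produces $\Psi(T_2)\le 4C\,T_2^{1/4}\big(\Psi(T_2)+\rho^{(p-2)/2}\big)$.

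Finally I would fix $T_2<T_1$ small enough, depending only on $C=C(D,p)$ and hence only on $D$ and $T_1$, that $4C\,T_2^{1/4}\le\tfrac12$; absorbing the $\Psi(T_2)$ term then gives $\Psi(T_2)\le C\rho^{(p-2)/2}$, that is $\norm{u-u^\rho}_{L^\infty([0,T_2];H^2)}\le C\rho^{(p-2)/2}\to0$ as $\rho\to0$. I expect the main obstacle to be the $L^2$ bound on $g_1$ obtained from the $H^2$ bound alone: in two dimensions $H^2$ does not embed into $W^{1,\infty}$, so no uniform $L^\infty$ control on $\grad u$ is available, and one must instead use Hölder with a finite-exponent Sobolev embedding. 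This is precisely where the hypothesis $2<p<3$ is essential, keeping $\abs{\grad u}^{p-2}$ subcritical; the same range makes $(p-2)/2<1$, which is what powers the subadditivity bound for the approximation error $g_2$.
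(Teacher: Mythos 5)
Your proposal is correct and follows essentially the same route as the paper's proof: the same splitting of the Duhamel integrand into the approximation error $F(\grad u^{\rho})-F_{\rho}(\grad u^{\rho})$ (controlled pointwise by $\rho^{(p-2)/2}\abs{\grad u^{\rho}}$) and the genuine difference $F(\grad u)-F(\grad u^{\rho})$, followed by semigroup smoothing and absorption on a short interval depending only on $D$ and $p$. The only difference is bookkeeping: the paper measures the nonlinear terms in $L^{2/(p-1)}$ and invokes the $L^{2/(p-1)}\to L^2$ smoothing of Lemma~\ref{lem: semi group on divergence} (kernel $(t-s)^{-(p+1)/4}$), whereas you measure them in $L^2$ via H\"older and the embedding $H^2(\T^2)\hookrightarrow W^{1,4}(\T^2)$ (kernel $(t-s)^{-3/4}$); both kernels are integrable and both yield the same rate $\rho^{(p-2)/2}$.
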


\begin{proof}
For simplicity, let $F_{\rho}(\xi)\defeq (\abs{\xi}^{2}+\rho)^{\frac{p-2}{2}}\xi$. By Lemma~\ref{lem: semi group on divergence} and Lemma~\ref{lem: Hs of semigroup} with $s=3$, we have for any $0<t<T_1$, 
\begin{eqnarray*}
&& \norm{\lap u(t) - \lap u^{\rho}(t)}_{L^2} \leq \left\|\int_0^t\,\left(-\lap \right)^{\frac{3}{2}} e^{-(t-s)\FL}\big(F_{\rho}(\grad u^{\rho})-F(\grad u^{\rho})\big)\,ds \right\|_{L^2}\\ 
&& \quad \quad \quad \quad \quad \quad +\left\|\int_0^t\,\left(-\lap\right)^{\frac{3}{2}} e^{-(t-s)\FL} \big(F(\grad u^{\rho})-F(\grad u)\big)\,ds \right\|_{L^2}\\ 
&& \quad \leq C\int_0^t\,(t-s)^{-\frac{p+1}{4}}\norm{F_{\rho}(\grad u^{\rho})-F(\grad u^{\rho})}_{L^{\frac{2}{p-1}}}\,ds\\ \notag
&& \quad \quad \quad \quad \quad \quad +C\int_0^t\,(t-s)^{-\frac{p+1}{4}}\norm{F(\grad u^{\rho})-F(\grad u)}_{L^\frac{2}{p-1}}\,ds\\
&& \quad \leq C\int_0^t\,(t-s)^{-\frac{p+1}{4}}\left\|\rho^{\frac{p-2}{2}}\abs{\grad u^{\rho}}\right\|_{L^\frac{2}{p-1}}\, ds\\ \notag
&&\quad \quad \quad \quad \quad \quad +C\int_0^t\,(t-s)^{-\frac{p+1}{4}}\left\|(\abs{\grad u^{\rho}}^{p-2}+\abs{\grad u}^{p-2})\abs{\grad u^{\rho}-\grad u}\right\|_{L^\frac{2}{p-1}}\,ds\\
&& \quad \leq C\rho^{\frac{p-2}{2}}\int_0^t\,(t-s)^{-\frac{p+1}{4}}\norm{\grad u^{\rho}}_{L^2}\, ds\\ 
&& \quad \quad \quad \quad \quad \quad +C\int_0^t\,(t-s)^{-\frac{p+1}{4}}\big(\norm{\grad u^{\rho}}_{L^2}^{p-2}
+\norm{\grad u}_{L^2}^{p-2}\big)\norm{\grad u^{\rho}-\grad u}_{L^2}\,ds\\
&&\quad \leq C\Big(\rho^{\frac{p-2}{2}} Dt^{\frac{3-p}{4}}+D^{p-2}t^{\frac{3-p}{4}}\norm{u^{\rho}-u}_{L^\infty((0,T_1);H^2)}\Big).
\end{eqnarray*}
Similarly, we can show that for any $0<t<T_1$, 
$$
\left\|\nabla u(t)-\nabla u^{\rho}(t) \right\|_{L^2} \le C \left(\rho^{\frac{p-2}{2}}Dt^{\frac{4-p}{4}}+D^{p-2}t^{\frac{4-p}{4}} \|u^\rho-u \|_{L^\infty \left((0, T_1); H^2\right)} \right)
$$
and
$$
\left\|u(t)- u^{\rho}(t) \right\|_{L^2} \le C \left(\rho^{\frac{p-2}{2}}Dt^{\frac{5-p}{4}}+D^{p-2}t^{\frac{5-p}{4}} \|u^\rho-u \|_{L^\infty \left((0, T_1); H^2\right)} \right).
$$
Combining these estimates we have for any $0<t<T_1$, 
\begin{equation*}
    \norm{u - u^{\rho}}_{L^\infty((0, t);H^2)}\leq C (1+t^{\frac{1}{2}}) \Big(\rho^{\frac{p-2}{2}} Dt^{\frac{3-p}{4}}+D^{p-2} t^{\frac{3-p}{4}} \norm{u^{\rho}-u}_{L^\infty((0,T_1);H^2)}\Big).
\end{equation*}
Therefore, there exists some $T_2<T_1$ sufficiently small, which only depends on $T_1$ and $D$, such that  
$$
C(1+T_2^{\frac{1}{2}})D^{p-2} T_2^{\frac{3-p}{4}}<\frac{1}{2}, 
$$
which further gives 
\begin{equation*}
    \norm{u - u^{\rho}}_{L^\infty((0,T_2);H^2)}< C(1+T_2^{\frac{1}{2}}) \rho^{\frac{p-2}{2}} D T_2^{\frac{3-p}{4}}.
\end{equation*}
This clearly suggests that for such choice of $T_2$, one has $\norm{u - u^{\rho}}_{L^\infty((0,T_2);H^2)}\rightarrow 0$ as $\rho\rightarrow 0$.
\end{proof}

\begin{lemma}
\label{lem:local energy}
With the same assumptions in Lemma \ref{lem:ST convergence}, if further we have $E(u_0)<0$, then there exists some $0<T_3<1$ which only depends on $D$, such that for any $0<t<T_3$, the following energy estimates hold
\begin{equation}
\label{20210710eq01} 
    E(u(t))\leq E(u_0)<0,
\end{equation}
\begin{equation}
\label{20210710eq02} 
    \frac{E(u(t))}{\norm{u(t)}_{L^2}^{p}} \leqslant \frac{ E(u_0)}{\norm{u_0}_{L^2}^{p}},
\end{equation}
and
\begin{equation} \label{20210710eq03} 
    \frac{1}{\norm{u(t)}_{L^2}^{p-2}}-\frac{1}{\norm{u_0}_{L^2}^{p-2}}\leq \frac{p(p-2)E(u_0) t}{\norm{u_0}_{L^2}^{p}}. 
\end{equation}
\end{lemma}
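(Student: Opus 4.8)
The plan is to exploit the $L^2$-gradient-flow structure of \eqref{e:4 order eqn}. Writing $a(t)\defeq\norm{\lap u(t)}_{L^2}^2$, $b(t)\defeq\norm{\grad u(t)}_{L^p}^p$ and $y(t)\defeq\norm{u(t)}_{L^2}^2$, so that $E(u(t))=\tfrac12 a-\tfrac1p b$, the whole argument rests on three facts. First, since $\partial_t u=-\lap^2 u-\grad\cdot(\abs{\grad u}^{p-2}\grad u)=-\tfrac{\delta E}{\delta u}$, the energy dissipates: $\tfrac{d}{dt}E(u(t))=-\norm{\partial_t u(t)}_{L^2}^2\le 0$. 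Second, integrating the equation against $u$ gives $\int_{\T^2}u\,\partial_t u=b-a$, i.e.\ the differential form $\tfrac12 y'(t)=b(t)-a(t)$ of the energy identity \eqref{e:energy identity}. Third, Cauchy--Schwarz yields $\norm{\partial_t u}_{L^2}^2\,y\ge\big(\int_{\T^2}u\,\partial_t u\big)^2=(b-a)^2$.

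Because \eqref{e:4 order eqn} is degenerate at $\grad u=0$, none of these identities is legitimate at the regularity $u\in C((0,T];W^{1,\infty})\cap L^\infty([0,T];H^2)$ available from Proposition~\ref{prop:regularity improve}. I would therefore perform every differentiation on the smooth solutions $u^\rho$ of the non-degenerate problem \eqref{e:approx 4 order eqn} — for which $\partial_t u^\rho$ and $\lap^2 u^\rho$ are honest $L^2$ functions — and then let $\rho\to 0$. By Lemma~\ref{lem:ST convergence}, $\norm{u-u^\rho}_{L^\infty([0,T_2];H^2)}\to 0$; together with the uniform $W^{1,\infty}$ bound for $u^\rho$ (coming from the proof of Theorem~\ref{thm:local mild soln for approx eqn} and Remark~\ref{rmk:initial estimate infty}) this gives $\grad u^\rho\to\grad u$ in every $L^q$, hence $E_\rho(u^\rho(t))\to E(u(t))$ and $a,b,y$ converge. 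I would set $T_3\defeq T_2$, so that $T_3$ depends only on $D$.

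Granting the three facts for $u$, estimate \eqref{20210710eq01} is immediate from dissipation: $E(u(t))\le E(u(0))=E(u_0)<0$. For \eqref{20210710eq02}, differentiate $\Phi(t)\defeq E(u(t))/y^{p/2}$; its sign equals that of $E'y-\tfrac p2 Ey'=-\norm{\partial_t u}_{L^2}^2\,y-(\tfrac p2 a-b)(b-a)$, which by the third fact is $\le-(a-b)^2+(\tfrac p2 a-b)(a-b)=\tfrac{p-2}{2}\,a\,(a-b)$. Now \eqref{20210710eq01} forces $\tfrac12 a<\tfrac1p b$, hence $a<b$; since $a\ge0$ and $p>2$ the last expression is $\le 0$, so $\Phi$ is nonincreasing and \eqref{20210710eq02} follows.

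For \eqref{20210710eq03}, put $Z(t)\defeq y^{-(p-2)/2}=\norm{u(t)}_{L^2}^{-(p-2)}$, so that the second fact gives $Z'=(p-2)\,y^{-p/2}(a-b)$. Rewriting $E=\tfrac12 a-\tfrac1p b$ as $a-b=pE-\tfrac{p-2}{2}a$ and discarding the nonnegative term $\tfrac{p-2}{2}a$ gives $a-b\le pE$, whence $Z'\le p(p-2)\,y^{-p/2}E=p(p-2)\,\Phi(t)\le p(p-2)\,E(u_0)/\norm{u_0}_{L^2}^{p}$ by \eqref{20210710eq02}. Integrating over $[0,t]$ yields \eqref{20210710eq03}. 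The one genuinely technical point — and the step I expect to be the main obstacle — is the rigor of facts one and two at the degenerate regularity; this is exactly what the regularization circumvents, and the only care required is to verify that the corrections generated by replacing $\abs{\grad u}^{p-2}$ with $(\abs{\grad u^\rho}^2+\rho)^{(p-2)/2}$ (for instance the gap between $\int(\abs{\grad u^\rho}^2+\rho)^{(p-2)/2}\abs{\grad u^\rho}^2$ and the regularized potential, which is $O(\rho)$ by the uniform $W^{1,\infty}$ bound) vanish as $\rho\to0$, so that passing to the limit in the integrated forms of \eqref{20210710eq01}--\eqref{20210710eq03} closes the argument.
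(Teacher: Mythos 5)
Your computational skeleton is exactly the paper's: the three facts (energy dissipation, $\tfrac12 y'=b-a$ from testing with $u$, and the Cauchy--Schwarz bound $\|\partial_t u\|_{L^2}^2\,y\ge(b-a)^2$) combine to show $E/\|u\|_{L^2}^p$ is nonincreasing and to produce the differential inequality for $\|u\|_{L^2}^{-(p-2)}$. Your sign computation $E'y-\tfrac p2Ey'\le\tfrac{p-2}{2}a(a-b)\le0$ and the identity $a-b=pE-\tfrac{p-2}{2}a$ are correct and coincide with the paper's inequalities \eqref{e:EL2 lower bdd with sigma} and \eqref{e:sigma L2 with power -p+2} after $\rho\to0$ (the paper carries the regularized quantity $G_\rho$, which converges to $(p-2)\|\lap u\|_{L^2}^2$, i.e.\ to your $(p-2)a$). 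The passage $\rho\to0$ via Lemma~\ref{lem:ST convergence} and the choice $T_3=T_2$ also match Step~I of the paper.

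The genuine gap is your premise that the $\rho$-regularization alone makes ``facts one and two'' legitimate. Removing the degeneracy of $|\grad u|^{p-2}$ does not make $u^\rho$ a classical solution: Theorem~\ref{thm:local mild soln for approx eqn} only gives $u^\rho\in L^\infty([0,T];H^2)\cap C([0,T];W^{1,\infty})$, so $\partial_t u^\rho$ and $\lap^2u^\rho$ are \emph{not} known to be honest $L^2$ functions, and the multiplications by $\partial_t u^\rho$ and $u^\rho$ (in particular the identity $\int_s^t\|\partial_t u^\rho\|_{L^2}^2\,d\tau=E_\rho(u^\rho(s))-E_\rho(u^\rho(t))$ down to $s=0$) are not justified for initial data that is merely $H^2\cap W^{1,\infty}$. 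The paper's bootstrap to classical solutions (citing \cite{KiselevXu16}) is invoked only for $u_0\in C^\infty$. For this reason the paper's proof has a second approximation layer that your proposal omits entirely: one picks smooth $\varphi_m\to u_0$ in $H^2\cap W^{1,\infty}$, proves the stability estimate \eqref{20210711eq20} for the corresponding mild solutions $u_m$ on a time interval $[0,T_3]$ whose length depends only on $D$ (not on $m$), and then passes to the limit in the three estimates via the $\varepsilon$-weakened versions \eqref{20210710eq04}--\eqref{20210710eq06}, letting $\varepsilon\to1$ at the end. Without this layer (or an independent proof of parabolic smoothing for \eqref{e:approx 4 order eqn} with rough data, which the paper does not supply), the differentiations your argument rests on are unsupported, so the proof as proposed does not close.
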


\begin{proof}
We divide the proof of Lemma \ref{lem:local energy} into two parts. 

\medskip

\textit{Step 1: We start with considering the case when $u_0 \in C^\infty$. More precisely, we show that if $u_0 \in C^\infty$, then the three estimates \eqref{20210710eq01}, \eqref{20210710eq02} and \eqref{20210710eq03} hold.  
}

\medskip

To begin with, we note that since $u_0 \in C^\infty$,  $u^{\rho}$ is indeed a classical solution of \eqref{e:approx 4 order eqn} as long as $\|u^{\rho}\|_{L^2}<\infty$. To see this, one may refer to, for example, \cite[Chapter 2, Theorem 2.1]{KiselevXu16} for a similar argument. 

Using the above argument, we multiply $\partial_t u^{\rho}$ on both sides of ~\eqref{e:approx 4 order eqn} and then apply integration by parts (see, also \cite[Page 949, Step 3]{ishige2020blowup}) to obtain
\begin{equation}
\label{e:L2 gradient}
    0\leq\int_s^t\,\int_{\T^2}\,(\partial_t u^{\rho})^2\,dx\,d\tau\,=\,E_{\rho}(u^{\rho}(s))\,-\,E_{\rho}(u^{\rho}(t))
\end{equation}
for $0<s<t<T_1$. Here $T_1$ is the same one as in Lemma~\ref{lem:ST convergence}. On the other hand, multiplying~\eqref{e:approx 4 order eqn} by $u^{\rho}$ and integrating by parts, we have
\begin{eqnarray}
\label{e:decay of L2 with sigma}
\frac{d}{dt}\norm{u^{\rho}(t)}_{L^2}^2%
&=&-2\int_{\T^2}\,u^{\rho}\Big((-\lap)^2 u^{\rho}+\grad\cdot F_{\rho}(\grad u^{\rho})\Big)\,dx \nonumber\\
&=& -2pE_{\rho}(u^{\rho}(t))+G_\rho(u^{\rho}(t))
\end{eqnarray}
for $t\in[0,T_1]$, where 
\begin{equation*}
    G_{\rho}(\phi) \defeq (p-2)\norm{\lap\phi}_{L^2}^2-2\rho\int_{\T^2}\,\Big((\abs{\grad\phi}^2+\rho)^{\frac{p-2}{2}}-\rho^{\frac{p-2}{2}}\Big)\,dx.
\end{equation*}
Combining \eqref{e:L2 gradient} and~\eqref{e:decay of L2 with sigma}, we have
\begin{align*}
    \norm{u^{\rho}(t)}_{L^2}^2\frac{d}{dt}\big(-E_{\rho}(u^{\rho}(t))\big)
    &=\norm{u^{\rho}(t)}_{L^2}^2\norm{\partial_t u^{\rho}(t)}_{L^2}^2\\ \notag
    &\geq\frac{1}{4}\Big(\frac{d}{dt}\norm{u^{\rho}(t)}_{L^2}^2\Big)^2\\ \notag
    &=\frac{1}{4}\Big(-2p E_{\rho}(u^{\rho}(t))+G_{\rho}(u^{\rho}(t))\Big)\frac{d}{dt}\norm{u^{\rho}(t)}_{L^2}^2,
\end{align*}
which further yields 
\begin{eqnarray*}
&&\frac{d}{dt}\Big(-\frac{E_{\rho}(u^{\rho}(t))}{\norm{u^{\rho}(t)}_{L^2}^p}\Big)
=\frac{\norm{u^{\rho}}_{L^2}^2\frac{d}{dt}\left(-E_{\rho}\left(u^{\rho}\right)\right)}{\norm{u^{\rho}}_{L^2}^{p+2}}+\frac{\frac{p}{2}E_{\rho}\left(u^{\rho}\right)\frac{d}{dt}\norm{u^{\rho}}_{L^2}^2}{\norm{u^{\rho}}_{L^2}^{p+2}}\\\notag
&& \quad \quad \quad \geq\frac{\frac{1}{4} \cdot \Big(-2p E_{\rho}(u^{\rho}(t))+G_{\rho}(u^{\rho}(t))\Big)\frac{d}{dt}\norm{u^{\rho}(t)}_{L^2}^2}{\norm{u^{\rho}}_{L^2}^{p+2}}+\frac{\frac{p}{2} \cdot E_{\rho}\left(u^{\rho}\right)\frac{d}{dt}\norm{u^{\rho}}_{L^2}^2}{\norm{u^{\rho}}_{L^2}^{p+2}}\\\notag
&& \quad \quad \quad = {\frac{G_{\rho}(u^{\rho}(t))}{4\norm{u^{\rho}(t)}_{L^2}^{p+2}} \cdot \frac{d}{dt}\norm{u^{\rho}(t)}_{L^2}^2}\\ \notag
&& \quad \quad \quad =\frac{G_{\rho}(u^{\rho}(t))}{4\norm{u^{\rho}(t)}_{L^2}^{p+2}} \cdot \Big(-2p E_{\rho}(u^{\rho}(t))+G_{\rho}(u^{\rho}(t))\Big)
\end{eqnarray*}
for $t\in[0,T_1]$. Integrating both sides of above inequality from $0$ to $\tau$ along the time variable, where $\tau\in[0,T_1]$, we get
\begin{align}
\label{e:EL2 lower bdd with sigma}
    &-\frac{E_{\rho}(u^{\rho}(\tau))}{\norm{u^{\rho}(\tau)}_{L^2}^p}+\frac{E_{\rho}(u_0)}{\norm{u_0}_{L^2}^p} \notag \\ 
    & \quad \quad \quad \quad \geq \frac{1}{4} \cdot \int_0^{\tau}\,\frac{G_{\rho}(u^{\rho}(t))}{\norm{u^{\rho}(t)}_{L^2}^{p+2}} \Big(-2p E_{\rho}(u^{\rho}(t))+G_{\rho}(u^{\rho}(t))\Big)\,dt.
\end{align}
By \eqref{e:decay of L2 with sigma}, we have
\begin{align*}
    -\frac{1}{p(p-2)} \cdot \frac{d}{dt} \left(\frac{1}{\norm{u^{\rho}(t)}_{L^2}^{p-2}} \right)
    &=\frac{1}{2p \norm{u^{\rho}(t)}_{L^2}^{p}} \cdot \frac{d}{dt}\norm{u^{\rho}(t)}_{L^2}^2\\ \notag
    &=\frac{1}{2p} \cdot \frac{-2pE_{\rho}(u^{\rho}(t))+G_\rho(u^{\rho}(t))}{\norm{u^{\rho}(t)}_{L^2}^{p}}.
\end{align*}
Thus,
\begin{align}
\label{e:sigma L2 with power -p+2}
    \frac{1}{\norm{u^{\rho}(\tau)}_{L^2}^{p-2}}-\frac{1}{\norm{u_0}_{L^2}^{p-2}}
    =-\frac{p-2}{2} \cdot \int_0^{\tau} \frac{-2pE_{\rho}(u^{\rho}(t))+G_\rho(u^{\rho}(t))}{\norm{u^{\rho}(t)}_{L^2}^{p}}dt
\end{align}
for $\tau\in[0,T_1]$. 

Now we use Lemma~\ref{lem:ST convergence} to see that 
\begin{equation} \label{20210711eq01}
    E_{\rho}(u^{\rho}(t))\rightarrow E(u(t))\qquad\text{uniformly on}\quad [0,T_2]\quad\text{as}\quad\rho\rightarrow 0,
\end{equation} 
and
\begin{equation}  \label{20210711eq02}
    G_{\rho}(u^{\rho}(t))\rightarrow
    (p-2)\norm{\lap u(t)}_{L^2}^2\qquad\text{uniformly on}\quad [0,T_2]\quad\text{as}\quad\rho\rightarrow 0.
\end{equation} 

\medskip

Plugging~\eqref{20210711eq01} in~\eqref{e:L2 gradient}, we have
\begin{equation}
\label{e:decay of Energy}
    E(u(t))\leq E(u(s))\qquad\text{for}\quad 0\leq s\leq t\leq T_2,
\end{equation}
which in particular verifies \eqref{20210710eq01}. 

\medskip

While if we apply~\eqref{20210711eq02} in~\eqref{e:EL2 lower bdd with sigma}, one can check that 
\begin{align}
\label{e:EL2 lower bdd}
    &-\frac{E(u(t))}{\norm{u(t)}_{L^2}^{p}}+ \frac{E(u_0)}{\norm{u_0}_{L^2}^{p}} \notag \\ 
    & \quad \quad \geq \frac{1}{4}\int_0^t\,\frac{(p-2)\norm{\lap u(t)}_{L^2}^2}{\norm{u(t)}_{L^2}^{p+2}} \cdot \Big(-2pE(u(t))+(p-2)\norm{\lap u(t)}_{L^2}^2\Big)\,ds 
\end{align}
for $t\in [0,T_2]$. Since $E(u_0)<0$, by~\eqref{e:decay of Energy} we get $E(u(t))<0$ for $0\leq t\leq T_2$. Using this fact in~\eqref{e:EL2 lower bdd}, we see the right hand side of \eqref{e:EL2 lower bdd} is non-negative and therefore 
\begin{equation*}
    -\frac{E(u(t))}{\norm{u(t)}_{L^2}^{p}}+ \frac{E(u_0)}{\norm{u_0}_{L^2}^{p}} \geq 0
\end{equation*}
for $t\in[0,T_2]$, which is \eqref{20210710eq02}. 

\medskip

Finally, we prove \eqref{20210710eq03}. Letting $\rho \to 0$ in ~\eqref{e:sigma L2 with power -p+2}, we have  
\begin{align}
\label{e:decay of L2 with power -p+2}
    \frac{1}{\norm{u(t)}_{L^2}^{p-2}}-\frac{1}{\norm{u_0}_{L^2}^{p-2}}
    =-\frac{p-2}{2} \cdot \int_0^{t}\,\frac{-2p E(u(s))+(p-2)\norm{\lap u(s)}_{L^2}^2}{\norm{u(s)}_{L^2}^p}ds
\end{align}
for $t\in[0,T_2]$. Combining~\eqref{20210710eq02} with~\eqref{e:decay of L2 with power -p+2} yields 
\begin{equation*}
    \frac{1}{\norm{u(t)}_{L^2}^{p-2}}-\frac{1}{\norm{u_0}_{L^2}^{p-2}}\leq \frac{p(p-2)E(u_0)t}{\norm{u_0}_{L^2}^{p}}
\end{equation*}
for $t\in[0,T_2]$. The proof of \eqref{20210710eq03} is complete. 

\medskip

\textit{Step II: We now prove the desired estimates \eqref{20210710eq01}, \eqref{20210710eq02} and \eqref{20210710eq03} under the assumption when $u_0 \in H^2\cap W^{1,\infty}$.}

\medskip

Note that it suffices to prove the following: there exists some $0<T_3<1$, which only depends on $D$ such that for any $0<\varepsilon<1$ and $0<t<T_3$, the following estimates hold
\begin{equation}
\label{20210710eq04} \tag{\ref{20210710eq01}'}
    E(u(t))<\varepsilon E(u_0)<0,
\end{equation}
\begin{equation}
\label{20210710eq05} \tag{\ref{20210710eq02}'}
    \frac{E(u(t))}{\norm{u(t)}_{L^2}^{p}}<\frac{ \varepsilon E(u_0)}{\norm{u_0}_{L^2}^{p}},
\end{equation}
and
\begin{equation} \label{20210710eq06} \tag{\ref{20210710eq03}'} 
    \frac{1}{\norm{u(t)}_{L^2}^{p-2}}-\frac{1}{\norm{u_0}_{L^2}^{p-2}}<\frac{\varepsilon p(p-2)E(u_0) t}{\norm{u_0}_{L^2}^{p}}. 
\end{equation}

Indeed, if we have already proved estimates \eqref{20210710eq04}, \eqref{20210710eq05} and \eqref{20210710eq06}, then it suffices to let $\varepsilon \to 1$ and this then recovers the desired estimates that we need. Note that it is important for our analysis that the choice of $T_3$ is \emph{independent} of the choice of $\varepsilon$. 

\medskip

Let us pick a sequence of smooth function $\left\{ \varphi_m
 \right\}_{m=1}^\infty \subset C^\infty(\T^2)$ such that
\begin{equation} \label{20210711eq10}
\varphi_m \to u_0 \quad \textrm{in} \quad H^2\cap W^{1,\infty}.
\end{equation}
Note that for $2<p<3$, the Gagliardo–Nirenberg's inequality suggests $$
\left\|\nabla u\right\|_{L^q} \le C \left\|u \right\|_{H^2}, 
$$
for any $1<q<\infty$, which together with \eqref{20210711eq10} gives 
$$
E(\varphi_m) \to E(u_0) \quad \textrm{as} \quad m \to \infty. 
$$
Let $u_m$ be the mild solution to the equation \eqref{e:4 order eqn} with initial data $\varphi_m$. By \eqref{20210711eq10}, we may assume 
\begin{equation} \label{20210711eq25}
\sup_{1 \le m<+\infty} \|\varphi_m\|_{H^2\cap W^{1,\infty}} \le 2D.
\end{equation} 
Therefore, by Proposition~\ref{prop:regularity improve} and Remark~\ref{rmk:T initial control}, there exists some $0<T'_1<1$ and $L>0$ (both parameters are independent of the choice of $m$), such that
\begin{equation} \label{20210711eq24}
\sup_{1 \le m<+\infty} \|u_m\|_{L^\infty \left([0, T'_1]; H^2 \right)} \le L\left( D+D^{p-1}\right). 
\end{equation} 
Without loss of generality, we may assume $T_1=T'_1$, otherwise, we simply replace both numbers by $\min\{T'_1, T_1\}$, which only depends on $D$. 

\medskip

We have the following claim: there exists some $0<T_3 \le T_1$ and some $C>0$, such that for any $0<t<T_3$ and $m \ge 1$, 
\begin{equation} \label{20210711eq20}
\left\|u_m-u \right\|_{L^\infty\left([0, t]; H^2 \right)} \leq C \left\|\varphi_m-u_0 \right\|_{H^2}, 
\end{equation} 
which further guarantees that 
\begin{equation} \label{20210711eq22}
u_m \to u \quad \textrm{in} \quad C \left([0, T_3]; H^2 \right). 
\end{equation} 

\medskip

\textit{Proof of the claim:} First of all, we note that for any $0<t<T_1$, there exists some constant $C>0$, such that
\begin{enumerate}
\item [(1).] 
\begin{align*}
\left\|\lap u_m(t)-\lap u(t)\right\|_{L^2}%
&\leq  C \|\varphi_m-u_0\|_{H^2} \\
& \quad  +Ct^{\frac{3-p}{4}} \|u_m-u\|_{L^\infty \left([0, t]; H^2 \right)} \cdot (DL)^{p-2} \cdot \left(1+D^{p-2}\right)^{p-2}; 
    \end{align*}
\medskip
    \item [(2).] 
\begin{align*}
\left\|\nabla u_m(t)-\nabla u(t)\right\|_{L^2}%
&\leq C \|\varphi_m-u_0\|_{H^2} \\
&\quad  +Ct^{\frac{4-p}{4}} \|u_m-u\|_{L^\infty \left([0, t]; H^2 \right)} \cdot (DL)^{p-2} \cdot \left(1+D^{p-2}\right)^{p-2}; 
    \end{align*}
    
\medskip

    \item [(3).] 
\begin{align*}
\left\| u_m(t)- u(t)\right\|_{L^2}%
&\leq  C \|\varphi_m-u_0\|_{H^2} \\
& \quad  +Ct^{\frac{5-p}{4}} \|u_m-u\|_{L^\infty \left([0, t]; H^2 \right)} \cdot (DL)^{p-2} \cdot \left(1+D^{p-2}\right)^{p-2}. 
\end{align*}
\end{enumerate}

The proof for these estimates are similar to the proof of Lemma \ref{lem:L-cts of operator} (see, also \eqref{20210705eq02}), and hence we would like to leave it to the interested reader. Now combining all these estimates, we see that there exists some $C>0$, such that for any $0<t<T_1$ and $m \ge 1$, 
\begin{eqnarray} \label{20210711eq21} 
&&\|u_m-u\|_{L^\infty \left([0, t]; H^2 \right)} \le  C \left\|\varphi_m-u_0 \right\|_{H^2} \nonumber \\
&& \quad \quad \quad \quad \quad \quad \quad \quad \quad \quad  +C t^{\frac{3-p}{4}} \|u_m-u\|_{L^\infty \left([0, t]; H^2 \right)} \cdot (DL)^{p-2} \cdot \left(1+D^{p-2}\right)^{p-2}. 
\end{eqnarray}
This suggest that we can find some $T_3 \in (0, T_1)$, such that for any $0<t<T_3$, 
$$
Ct^{\frac{3-p}{4}} \cdot (DL)^{p-2} \cdot \left(1+D^{p-2}\right)^{p-2}<\frac{1}{2}, 
$$
which together with \eqref{20210711eq21}, gives the desired claim \eqref{20210711eq20}.  

\medskip

From now on, let us fix the choice of $T_3$. Note that here we might assume $T_3 \le T_2$, otherwise, we just replace $T_3$ by $T_2$. We show that with this particular choice of $T_3$, the estimates \eqref{20210710eq04}, \eqref{20210710eq05} and \eqref{20210710eq06} hold. 

\medskip

Here we only prove \eqref{20210710eq04}, while the proofs for the other two estimates are similar and hence we omit them here. Take and fix any $0<\varepsilon<1$ and $0<t<T_3$ from now on. Our goal is to show that
\begin{equation} \label{20210711eq23}
\varepsilon E(u_0)-E(u(t))>0. 
\end{equation} 
We prove this via a standard approximation argument. We make a remark here that the choice of $m$ is allowed to be depended on $\varepsilon, t, E(u_0)$ and $p$ in this case. We now turn to the detail. 

For any $m \ge 1$, we have
\begin{eqnarray*}
\textrm{LHS of \eqref{20210711eq23}}%
&=& E(u_0)-E(u(t))-(1-\varepsilon) E(u_0) \\
&=& \left(E(u_0)-E(\varphi_m) \right)- \left(E(u(t))-E(u_m(t)) \right) \\
&& +\left(E(\varphi_m)-E(u_m(t)) \right)+ \left(\varepsilon-1 \right) E(u_0). 
\end{eqnarray*}
Now by \eqref{20210711eq10} and \eqref{20210711eq20}, we can find some $M=M(t, \varepsilon,E(u_0),p)$ sufficiently large, such that for any $m>M$, 
$$
\left|E(u_0)-E(\varphi_m) \right|+ \left|E(u(t))-E(u_m(t)) \right|<\frac{(\varepsilon-1)E(u_0)}{2}.
$$
This, together with the fact that $E(\varphi_m)-E(u_m(t)) \ge 0$, suggest that when $m \ge M$, 
$$
\textrm{LHS of \eqref{20210711eq23}}>0.
$$
The proof is complete. 
\end{proof}

We are ready to prove our main result in this section. 

\begin{proof}[Proof of Theorem \ref{thm:blow up}]
Let $[0,T_{\max})$ be the maximum time interval that~\eqref{e:4 order eqn} admits a mild solution (in the sense of Definition~\ref{def:mildinitialH2}) with $u_0\in H^2\cap W^{1,\infty}$ and $E(u_0)<0$. We prove $T_{\max} \leq -\frac{\|u_0\|_{L^2}^2}{p(p-2)E(u_0)}$ by showing contradiction. Assume~\eqref{e:4 order eqn} admits a mild solution on $[0,T_{\max})$ with $T_{\max}>-\frac{\|u_0\|_{L^2}^2}{p(p-2)E(u_0)}$. This in particular means
\begin{equation}
    u\in L^\infty([0,T];H^2(\T^2)\cap W^{1,\infty}(\T^2)),
\end{equation}
for any $T \in \left(-\frac{\|u_0\|_{L^2}^2}{p(p-2)E(u_0)}, T_{\max} \right)$ by Proposition~\ref{prop:regularity improve}. Fix such a choice of $T$, we denote $D \defeq \|u\|_{L^\infty \left([0, T]; H^2\cap W^{1,\infty} \right)}$. Then by Theorem~\ref{20200705thm01} and Theorem~\ref{thm:local mild soln for approx eqn}, there exists some $T_1$ which depends on $D$, such that 
\begin{equation*}
    \max\left\{\norm{u}_{L^{\infty}([0,T_1];H^2)},\sup_{0\leq\rho\leq 1}\norm{u^{\rho}}_{L^{\infty}([0,T_1];H^2)}\right\}\leq L(D+D^{p-1})
\end{equation*}
for some $L>0$ independent of the choice of $D$. Then by Lemma~\ref{lem:local energy}, there exits $T_3>0$, which only depends on $L$, $D$, and $T_1$, such that
\begin{equation}
\label{e:local bound bwn 0 and T2}
    \frac{1}{\norm{u(t)}_{L^2}^{p-2}}-\frac{1}{\norm{u_0}_{L^2}^{p-2}}\leq \frac{p(p-2)E(u_0) t}{\norm{u_0}_{L^2}^{p}}
\end{equation}
for $t\in[0, T_3]$. Next, by the choice of $D$ and \eqref{20210710eq01}, we have $\norm{u(T_3)}_{H^2\cap W^{1,\infty}}\leqslant D$ and $E(u(T_3))<0$. This allows us to repeat the argument above by considering the initial data  $u(T_3)$ to get
\begin{equation}
\label{e:local bound bwn T2 and 2T2}
      \frac{1}{\norm{u(t+T_3)}_{L^2}^{p-2}}-\frac{1}{\norm{u(T_3)}_{L^2}^{p-2}}\leq \frac{p(p-2)E(u(T_3))t}{\norm{u(T_3)}_{L^2}^{p}}  
\end{equation}
for $t\in[0,T_3]$. Adding~\eqref{e:local bound bwn 0 and T2} and~\eqref{e:local bound bwn T2 and 2T2}, and using \eqref{20210710eq02}, we get
\begin{equation*}
    \frac{1}{\norm{u(t)}_{L^2}^{p-2}}-\frac{1}{\norm{u_0}_{L^2}^{p-2}}\leq \frac{p(p-2)E(u_0)t}{\norm{u_0}_{L^2}^{p}}
\end{equation*}
for $t\in[0,2 T_3]$. Repeating the above argument, we are able to get for any $t\in [0,T]$:
\begin{equation}
   \frac{1}{\norm{u(t)}_{L^2}^{p-2}}-\frac{1}{\norm{u_0}_{L^2}^{p-2}}\leq \frac{p(p-2)E(u_0)t}{\norm{u_0}_{L^2}^{p}}
\end{equation}
equivalently,
\begin{equation}
\label{e:whole interval bdd}
    \frac{1}{\norm{u(t)}_{L^2}^{p-2}}\leqslant\frac{\norm{u(t)}_{L^2}^2+p(p-2)E(u_0)t}{\norm{u_0}_{L^2}^{p}}.
\end{equation}
Note that since the left hand side of \eqref{e:whole interval bdd} is positive, thus the right hand side of \eqref{e:whole interval bdd} must be positive as well. Therefore, the above inequality can be further rewritten into:
\begin{equation}
\label{e:final lower bd}
    \norm{u(t)}_{L^2}\geq\Big(\frac{\norm{u_0}_{L^2}^{p}}{\norm{u_0}_{L^2}^2+p(p-2)E(u_0)t}\Big)^{\frac{1}{p-2}}.
\end{equation}
Since $u$ is a mild solution on $[0,T]$ by assumption, Corollary~\ref{cor:H2 argument} yields 
\begin{equation}
\label{e:rhs small infty}
    \Big(\frac{\norm{u_0}_{L^2}^{p}}{\norm{u_0}_{L^2}^2+p(p-2)E(u_0)t}\Big)^{\frac{1}{p-2}}<\infty
\end{equation}
for any $t\in[0,T]$. This clearly gives a contradiction as $t\rightarrow-\frac{\|u_0\|_{L^2}^2}{p(p-2)E(u_0)}$ (since $T>-\frac{\|u_0\|_{L^2}^2}{p(p-2)E(u_0)}$). The proof is complete. 
\end{proof}

\subsection{Blow-up rate of $L^2$ norm}
\label{sec:characterize blowup}
In this section, we show that in our previous example (Theorem \ref{thm:blow up}), the $L^2$-norm of the solution must blow up (see, Theorem \ref{L2blowup-rate}). This in particular gives an example which satisfies the assumption of Corollary \ref{cor:L2 argument}. Moreover, we characterize the blow-up rate of the $L^2$ norm near the maximal time of existence.  

We begin with establishing the following local existence result.
\begin{proposition}
\label{thm:local exits with small L2}
Let $2<p<3$. There exists a $\delta_{*}>0$ sufficiently small, only depending on $p$, such that if
\begin{equation}
\label{e:blowup rate1}
\varrho^{\tfrac{3-p}{2(p-2)}}\norm{u_0}_{L^2}\leqslant\delta,
\end{equation}
for some $\delta\in\left(0,\delta_{*}\right)$ and $\varrho>0$,  then there is a mild solution $u$ (in the sense of Definition~\ref{def:mildinitialH2}) to the problem~\eqref{e:4 order eqn} in $\T^2\times\left(0, 2\varrho\right]$, such that
\begin{equation}
\label{e:grad infty estimate}
\norm{\grad u(t)}_{L^{\infty}}\leqslant Ct^{-\tfrac{1}{2}}\left(\delta_{*}\varrho^{-\tfrac{3-p}{2(p-2)}}\right)
\end{equation}
for some $C>0$ which only depends on $p$.
\end{proposition}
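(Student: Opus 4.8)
The plan is to construct $u$ by a contraction mapping argument for the Duhamel map $\calT$ from \eqref{e:volterra integral}, but now carried out in a time-weighted space adapted to the $L^\infty$-smoothing of the semigroup recorded in Lemma~\ref{lem:L_infty estimate}. Set $T\defeq 2\varrho$ and work in the Banach space $\widetilde{Y}$ of all $u$ on $(0,T]\times\T^2$ with
\[
\|u\|_{\widetilde{Y}}\defeq \sup_{0<t\le T}\Big(t^{1/4}\|u(t)\|_{L^\infty}+t^{1/2}\|\grad u(t)\|_{L^\infty}\Big)<\infty .
\]
The weights $t^{1/4}$ and $t^{1/2}$ are dictated by Lemma~\ref{lem:L_infty estimate}: with $r=2$ one has $\|e^{-t\FL}u_0\|_{L^\infty}\le Ct^{-1/4}\|u_0\|_{L^2}$ and $\|\grad e^{-t\FL}u_0\|_{L^\infty}\le Ct^{-1/2}\|u_0\|_{L^2}$, so the linear part $e^{-t\FL}u_0$ has $\widetilde{Y}$-norm bounded by $C\|u_0\|_{L^2}$, with the borderline singularity $t^{-1/2}$ in the gradient already matching the target bound \eqref{e:grad infty estimate}.

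For the nonlinear term I would estimate, for $u\in\widetilde{Y}$ and $g\defeq|\grad u|^{p-2}\grad u$ (so that $\|g(s)\|_{L^\infty}\le\|\grad u(s)\|_{L^\infty}^{p-1}\le\|u\|_{\widetilde{Y}}^{p-1}s^{-(p-1)/2}$), using Lemma~\ref{lem:L_infty estimate} with $r=\infty$:
\[
\Big\|\grad^{j}\int_0^t \grad e^{-(t-s)\FL}g(s)\,ds\Big\|_{L^\infty}\le C\|u\|_{\widetilde{Y}}^{p-1}\int_0^t (t-s)^{-\frac{j+1}{4}}s^{-\frac{p-1}{2}}\,ds ,\qquad j\in\{0,1\}.
\]
The Beta integrals converge precisely because $2<p<3$ (the source exponent $-(p-1)/2>-1$ and the kernel exponents are $<1$), and the substitution $s=t\tau$ produces the powers $t^{(5-2p)/4}$ for $j=0$ and $t^{(2-p)/2}$ for $j=1$; multiplying by the weights $t^{1/4}$ and $t^{1/2}$ respectively turns both into $t^{(3-p)/2}\le T^{(3-p)/2}$. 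Together with the elementary inequality $\big||a|^{p-2}a-|b|^{p-2}b\big|\le C_p(|a|^{p-2}+|b|^{p-2})|a-b|$ already used in Lemma~\ref{lem:L-cts of operator}, this yields the two closing estimates
\[
\|\calT(u)\|_{\widetilde{Y}}\le C\|u_0\|_{L^2}+C\varrho^{\frac{3-p}{2}}\|u\|_{\widetilde{Y}}^{p-1},
\]
\[
\|\calT(u_1)-\calT(u_2)\|_{\widetilde{Y}}\le C\varrho^{\frac{3-p}{2}}\big(\|u_1\|_{\widetilde{Y}}^{p-2}+\|u_2\|_{\widetilde{Y}}^{p-2}\big)\|u_1-u_2\|_{\widetilde{Y}}.
\]

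The decisive point is then a scaling bookkeeping that fixes $\delta_*$ \emph{as a function of $p$ alone}. I would seek a fixed point in the ball $\mathbb{B}_R(0)\subset\widetilde{Y}$ with $R\defeq K\delta_*\varrho^{-\frac{3-p}{2(p-2)}}$. The exponent $\frac{3-p}{2(p-2)}$ is exactly the one rendering the problem scale-invariant in $\varrho$: a direct computation gives $\varrho^{\frac{3-p}{2}}R^{p-1}=K^{p-1}\delta_*^{p-1}\varrho^{-\frac{3-p}{2(p-2)}}$ and $\varrho^{\frac{3-p}{2}}R^{p-2}=K^{p-2}\delta_*^{p-2}$, so that, dividing by the common factor $\varrho^{-\frac{3-p}{2(p-2)}}$ and using the hypothesis $\|u_0\|_{L^2}\le\delta\varrho^{-\frac{3-p}{2(p-2)}}<\delta_*\varrho^{-\frac{3-p}{2(p-2)}}$, the self-mapping and contraction requirements both reduce to inequalities of the shape $C+CK^{p-1}\delta_*^{p-2}\le K$ and $2CK^{p-2}\delta_*^{p-2}<1$. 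Choosing $K=2C$ and then $\delta_*$ so small that $(2C)^{p-1}\delta_*^{p-2}<1$ (possible since $p>2$, and depending only on $p$) makes $\calT$ a contraction on $\mathbb{B}_R(0)$. The Banach fixed point theorem produces a unique $u\in\mathbb{B}_R(0)$, whence $t^{1/2}\|\grad u(t)\|_{L^\infty}\le R=2C\delta_*\varrho^{-\frac{3-p}{2(p-2)}}$, which is exactly \eqref{e:grad infty estimate}.

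Finally I would record the qualitative regularity: the $\widetilde{Y}$-bound gives $u(t)\in W^{1,\infty}$ for every $t\in(0,T]$, continuity $u\in C((0,T];W^{1,\infty})$ follows from strong continuity of $e^{-t\FL}$ together with the locally uniform convergence of the Duhamel integral, and $u(t)\to u_0$ in $L^2$ as $t\to 0^+$ since $e^{-t\FL}u_0\to u_0$ while the nonlinear integral tends to $0$ in $L^2$; hence $u$ is a mild solution in the sense of Definition~\ref{def:mildinitialH2} on $(0,2\varrho]$. I expect the only genuinely delicate step to be the scaling accounting: one must verify that the $\varrho$-powers of the linear datum term and of the nonlinear term coincide, which is what forces the precise exponent $\frac{3-p}{2(p-2)}$ in \eqref{e:blowup rate1} and, crucially, lets $\delta_*$ be chosen independently of $u_0$ and $\varrho$. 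The integrability of the Beta kernels, hence the restriction $2<p<3$, is the other place where the hypotheses enter essentially.
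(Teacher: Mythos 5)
Your proposal is correct and follows essentially the same route as the paper: the same time-weighted space built from $\sup_t t^{1/4}\|u(t)\|_{L^\infty}$ and $\sup_t t^{1/2}\|\grad u(t)\|_{L^\infty}$, the same two semigroup estimates from Lemma~\ref{lem:L_infty estimate}, the same boundedness and Lipschitz bounds with the factor $T^{\frac{3-p}{2}}$, and the same scaling bookkeeping with $R\sim\delta\varrho^{-\frac{3-p}{2(p-2)}}$ that lets $\delta_*$ depend only on $p$ and makes the existence time reach $2\varrho$. The only (cosmetic) difference is that you fix $T=2\varrho$ up front and absorb the smallness into $\delta_*$, whereas the paper first derives $\tilde T\le(4C_0R^{p-2})^{-2/(3-p)}$ and then checks this exceeds $2\varrho$.
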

Before we prove the above theorem, we need to define a proper Banach space so that one can apply the Banach contraction mapping theorem. Let $\tilde{Y}_{T}$ be the collection of all measurable functions on $\T^2 \times [0, T]$ with
\begin{enumerate}
\item [(1).] $\sup\limits_{0\leq t\leq T}t^{1/4}\norm{u(t)}_{L^{\infty}}<\infty$;

\item [(2).] $\sup\limits_{0\leq t\leq T}t^{1/2}\norm{\grad u(t)}_{L^{\infty}}<\infty$; 

\item [(3).] $\norm{u(0)}_{L^2}<\infty$, 
\end{enumerate}
and define:
\begin{equation*}
\left[u\right]_{\tilde{Y}_{T}}\defeq\max\left\{\sup_{0\leq t\leq T}t^{1/4}\norm{u(t)}_{L^{\infty}}, \sup_{0\leq t\leq T}t^{1/2}\norm{\grad u(t)}_{L^{\infty}}\right\}.
\end{equation*}
Then $\tilde{Y}_{T}$ is a Banach space equipping with the following norm:
\begin{equation}
\norm{u}_{\tilde{Y}_{T}}\defeq\norm{u(0)}_{L^2}+ \left[u\right]_{\tilde{Y}_{T}}.
\end{equation}
\begin{proof}[Proof of Proposition~\ref{thm:local exits with small L2}]
The proof of Proposition~\ref{thm:local exits with small L2} is similar to the one of Theorem \ref{thm:local mild soln} and Theorem \ref{20200705thm01}. Therefore, we only sketch the proof here and omit the details. 

First of all, we show that for any $T>0$, the map $\calT$ is bounded on $\tilde{Y}_{T}$. More precisely, we have the following quantitative bound: there exists a constant $C_5>0$, such that for any $T>0$ and $u \in \tilde{Y}_{T}$, 
\begin{equation} 
\left\|\calT(u) \right\|_{\tilde{Y}_{T}} \le C_5 \left(\left\|u_0\right\|_{L^2}+T^{\tfrac{3-p}{2}}  \left\|u \right\|_{\tilde{Y}_{T}}^{p-1} \right), 
\end{equation}
which follows from the following estimates:
\begin{enumerate}
    \item [(1).] $\sup\limits_{0<t \le T} t^{\tfrac{1}{4}}\|\calT(u) \|_{L^{\infty}} \leq C_5 \left( \|u_0\|_{L^2}+T^{\frac{3-p}{2}} \|u\|_{\tilde{Y}_{T}}^{p-1} \right)$;

    \medskip
    
    \item [(2).] $\sup\limits_{0<t \le T} t^{\tfrac{1}{2}}\|\grad\calT(u) \|_{L^{\infty}} \leq C_5 \left( \|u_0\|_{L^2}+T^{\frac{3-p}{2}} \|u\|_{\tilde{Y}_{T}}^{p-1} \right)$; 
\end{enumerate}

\medskip

Next, we show that $\calT$ is a Lipschitz mapping on $\tilde{Y}_{T}$ for any $T>0$, that is: there exists a constant $C_6>0$, such that for any $T>0$, 
\begin{align}
\left\|\calT(u_1)-\calT(u_2) \right\|_{\tilde{Y}_{T}}  
\leq C_6 T^{\frac{3-p}{2}} \left( \|u_1\|_{\tilde{Y}_{T}}^{p-2}+\|u_2\|_{\tilde{Y}_{T}}^{p-2} \right)  \left\|u_1-u_2 \right\|_{\tilde{Y}_{T}} \notag, 
\end{align}
which, similarly, can be achieved by showing the following estimates
\begin{enumerate}
     \item [(3).] $\sup\limits_{0<t \le T} t^{\tfrac{1}{4}}
     \left\|\calT(u_1)-\calT(u_2) \right\|_{L^{\infty}} 
     \leq C_6 T^{\frac{3-p}{2}} \left( \|u_1\|_{\tilde{Y}_{T}}^{p-2}+\|u_2\|_{\tilde{Y}_{T}}^{p-2} \right)  \left\|u_1-u_2 \right\|_{\tilde{Y}_{T}} $; 
     
     \medskip
     
      \item [(4).] $\sup\limits_{0<t \le T} t^{\tfrac{1}{2}}
     \left\|\grad\calT(u_1)-\nabla \calT(u_2) \right\|_{L^{\infty}} 
     \leq C_6 T^{\frac{3-p}{2}} \left( \|u_1\|_{\tilde{Y}_{T}}^{p-2}+\|u_2\|_{\tilde{Y}_{T}}^{p-2} \right)  \left\|u_1-u_2 \right\|_{\tilde{Y}_{T}} $.
\end{enumerate}

Finally, we apply the Banach contraction mapping theorem in a ball $\mathbb B_{R}(0)$ in $\tilde{Y}_{\tilde{T}}$, where, we can take $R \geq 2C_0\|u_0\|_{L^2}$ with $C_0=\max\{1, C_5, C_6\}$ and $\tilde{T} \leq \left(4C_0 R^{p-2}\right)^{-\tfrac{2}{3-p}}$. By assumption~\eqref{e:blowup rate1}, we may take $R=2C_0\delta\varrho^{-\tfrac{3-p}{2(p-2)}}$ for instance, and this means 
\begin{equation*}
\tilde{T}\leqslant \left(4C_0\right)^{-\tfrac{2}{3-p}}\cdot\left(2C_0\delta\right)^{-\tfrac{2(p-2)}{3-p}}\cdot\varrho.
\end{equation*}
Now we take $\delta_{*}$ small enough, such that $\left(4C_0\right)^{-\tfrac{2}{3-p}}\cdot\left(2C_0\delta\right)^{-\tfrac{2(p-2)}{3-p}}>2$ since $2<p<3$. This allows us to take $\tilde{T}=2\varrho$. Finally, the estimate~\eqref{e:grad infty estimate} follows from the choice of $\tilde{T}$, $R$ and $\norm{u_0}_{L^2}$.
\end{proof}

Finally, we show that the $L^2$-norm of the solution constructed in Theorem~\ref{thm:blow up} must blow up. 

\begin{theorem}
\label{L2blowup-rate}
Under the same assumption of Theorem~\ref{thm:blow up}, and  let $T_{\max}$ be the maximal time of existence of the mild solution to~\eqref{e:4 order eqn}, then the $L^2$ norm of the solution must blow up at $T_{\max}$. Moreover, we have following quantitative blow-up rate:
\begin{equation}
\label{e:blowup rate}
\liminf_{t\nearrow T_{\max}}\left(T_{\max}-t\right)^{\tfrac{3-p}{2(p-2)}}\norm{u(t)}_{L^2}>0.
\end{equation}
\end{theorem}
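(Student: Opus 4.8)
The plan is to turn Proposition~\ref{thm:local exits with small L2} into a continuation criterion expressed purely through the $L^2$ norm, and to derive both assertions by contradicting the maximality of $T_{\max}$ (equivalently the $W^{1,\infty}$-blowup criterion of Corollary~\ref{cor:H2 argument}). Note that the energy lower bound \eqref{e:final lower bd} obtained in the proof of Theorem~\ref{thm:blow up} is \emph{not} by itself enough: it controls $\|u(t)\|_{L^2}$ from below only up to the energy-predicted time $-\|u_0\|_{L^2}^2/(p(p-2)E(u_0))$, which may strictly exceed $T_{\max}$, so it need not detect a blow-up at $T_{\max}$ itself.

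Write $\alpha=\tfrac{3-p}{2(p-2)}>0$. The core observation I would record first is the following extension statement: if $t_0\in[0,T_{\max})$ and $\varrho>0$ satisfy $\varrho^{\alpha}\,\|u(t_0)\|_{L^2}<\delta_*$, then applying Proposition~\ref{thm:local exits with small L2} with initial time $t_0$ and data $u(t_0)$ produces a mild solution $\tilde u$ of \eqref{e:4 order eqn} on $(t_0,t_0+2\varrho]$ obeying $\|\nabla\tilde u(t)\|_{L^\infty}\le C(t-t_0)^{-1/2}\delta_*\varrho^{-\alpha}$ together with the companion bound $\|\tilde u(t)\|_{L^\infty}\le C(t-t_0)^{-1/4}\delta_*\varrho^{-\alpha}$ built into the space $\tilde Y$. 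The whole point is that $\varrho$ may be chosen so that $t_0+2\varrho>T_{\max}$.

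The main technical step, and the one I expect to be the only delicate point, is the matching: $\tilde u$ must coincide with $u$ on the overlap $(t_0,T_{\max})$. Since $u(t_0)\in W^{1,\infty}$, the restriction of $u$ to $[t_0,T_{\max})$ is itself a mild solution in the sense of Definition~\ref{def:mildinitialH2} with data $u(t_0)$; being bounded in $W^{1,\infty}$ on compact subintervals it lies in $\tilde Y$ on a short interval $[t_0,t_0+\sigma]$, where $\tilde u$ lives as well. On a sufficiently short interval the Lipschitz constant underlying Proposition~\ref{thm:local exits with small L2} is $<1$, so uniqueness in $\tilde Y$ forces $\tilde u=u$ near $t_0$, and then on all of $(t_0,T_{\max})$ by uniqueness of the $W^{1,\infty}$ solution of Theorem~\ref{20200705thm01}. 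Hence $\tilde u$ genuinely extends $u$ beyond $T_{\max}$, and the gradient bound additionally gives $\limsup_{t\to T_{\max}^-}\|u(t)\|_{W^{1,\infty}}<\infty$; either conclusion contradicts the maximality of $T_{\max}$ (resp.\ Corollary~\ref{cor:H2 argument}).

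Granting the criterion, both claims follow by choosing $(t_0,\varrho)$ appropriately. For the blow-up of the $L^2$ norm, if $\|u(t)\|_{L^2}\le M$ for all $t<T_{\max}$, fix $\varrho$ with $\varrho^{\alpha}M<\delta_*$ and any $t_0\in(T_{\max}-2\varrho,T_{\max})$; then $\varrho^{\alpha}\|u(t_0)\|_{L^2}<\delta_*$ and $t_0+2\varrho>T_{\max}$, a contradiction. For the rate, assume $\liminf_{t\to T_{\max}}(T_{\max}-t)^{\alpha}\|u(t)\|_{L^2}=0$ and pick $t_n\nearrow T_{\max}$ with $(T_{\max}-t_n)^{\alpha}\|u(t_n)\|_{L^2}\to 0$; taking $\varrho_n=T_{\max}-t_n$ gives $\varrho_n^{\alpha}\|u(t_n)\|_{L^2}<\delta_*$ for large $n$ while $t_n+2\varrho_n=2T_{\max}-t_n>T_{\max}$, again a contradiction, so the liminf is strictly positive. (The rate statement in fact subsumes the qualitative blow-up, since $\alpha>0$ forces $\|u(t)\|_{L^2}\ge c\,(T_{\max}-t)^{-\alpha}\to\infty$.)
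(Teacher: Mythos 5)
Your proof is correct and follows essentially the same route as the paper: both argue by contradiction, applying Proposition~\ref{thm:local exits with small L2} with $\varrho=T_{\max}-t_*$ to the data $u(t_*)$ so as to produce a solution on a time interval of length $2(T_{\max}-t_*)$, which extends $u$ past $T_{\max}$ and contradicts maximality. The only difference is that you make explicit the uniqueness/gluing step identifying the $\tilde{Y}$-solution with $u$ on the overlap (and note that the rate subsumes the qualitative $L^2$ blow-up), points the paper leaves implicit.
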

\begin{proof}
It suffices to show~\eqref{e:blowup rate}, and we prove it by showing contradiction. Assume~\eqref{e:blowup rate} does not hold. Then there exists $C_{*}>0$ small enough, such that
\begin{equation}
\norm{u(t_{*})}_{L^2}\leq C_{*}\left(T_{\max}-t_{*}\right)^{-\tfrac{3-p}{2(p-2)}}
\end{equation}
for some $t_{*} \in (0, T_{\max})$.  Here we let $C_{*}\leqslant\frac{\delta_{*}}{2}$. Now let $\tilde{u}$ be the solution to~\eqref{e:4 order eqn}  with initial data $u(t_{*})\in H^2\cap W^{1,\infty}$ (this is guaranteed by Theorem~\ref{20200705thm01} and Proposition~\ref{prop:regularity improve}). By Theorem~\ref{thm:blow up}, we see that the maximal existence time $\tilde{T}_{\max}$ of $\tilde{u}$ satisfies $\tilde{T}_{\max} \le T_{\max}-t_{*}$. However, Theorem~\ref{thm:local exits with small L2} indicates that
\begin{equation*}
\tilde{T}_{\max} \ge 2\left(T_{\max}-t_{*}\right)>T_{\max}-t_{*}.
\end{equation*}
This is a contradiction.
\end{proof}

\bibliographystyle{plain}
\bibliography{refs,preprints}

\end{document}